\newtheorem{theorem}{\bf Theorem}[section]
\newtheorem{conjecture}{\bf Conjecture}[section]
\newtheorem{lemma}{\bf Lemma}[section]
\newtheorem{definition}{\bf Definition}[section]
\newtheorem{remark}{\bf Remark}[section]
\title{A study of 2-periodic weft-knitted textiles using the theory of knots and links}
\author{Miriam Kuzbary}\email{kuzbary@gatech.edu}\address{Department of Mathematics, Amherst College, Amherst, Massachusetts, 01002, USA}
\author{Shashank G. Markande}\email{markandeshashank@gmail.com}\address{Atelio by FIS}
\author{Elisabetta A. Matsumoto}\email{sabetta@gatech.edu}\address{{School of Physics, Georgia Institute of Technology, Atlanta, Georgia 30332, USA}}\address{{International Institute for Sustainability with Knotted Chiral Meta Matter (WPI-SKCM), Hiroshima University, Higashi-Hiroshima, 739-8526, Japan}}
\author{Stanley Pritchard}\email{spritcha@eagles.nccu.edu }\address{Department of Mathematics and Physics, North Carolina Central University, Durham, North Carolina 27707, USA}
\begin{document}

\maketitle

\begin{abstract}
In this study, we use a correspondence between two-periodic weft-knitted textiles and links in the thickened torus to study the former using link invariants. We establish a criterion to identify the set of links whose elements are realized through techniques of weft-knitting leading to new, unconventional types of weft-knitting stitch patterns. A crucial topological underpinning of these links is shown to be their correspondence with ribbon knots and links in Euclidean three-space and equivalently in the three-sphere. Using the mechanics of weft-knitting, we propose a protocol for constructing and enumerating links in the thickened torus that can be knitted as a motif of a weft-knitted textile, and we call such links \emph{swatches}. Based on our analysis of link invariants of swatches, we propose conjectures on hyperbolic structure of the link complements of swatches and their multivariable Alexander polynomials. 
\end{abstract}

\section{Introduction}

\begin{figure*}
\centering
\subfloat[This stitch starts with a bight on the left needle $n_l$ (blue). The right needle $n_r$ (yellow) is inserted through the bight, below $n_l$ (top). This bight now has two both needles through it (bottom). The working end of the yarn is the horizontal line segment here.]
 {\includegraphics[width=60mm]{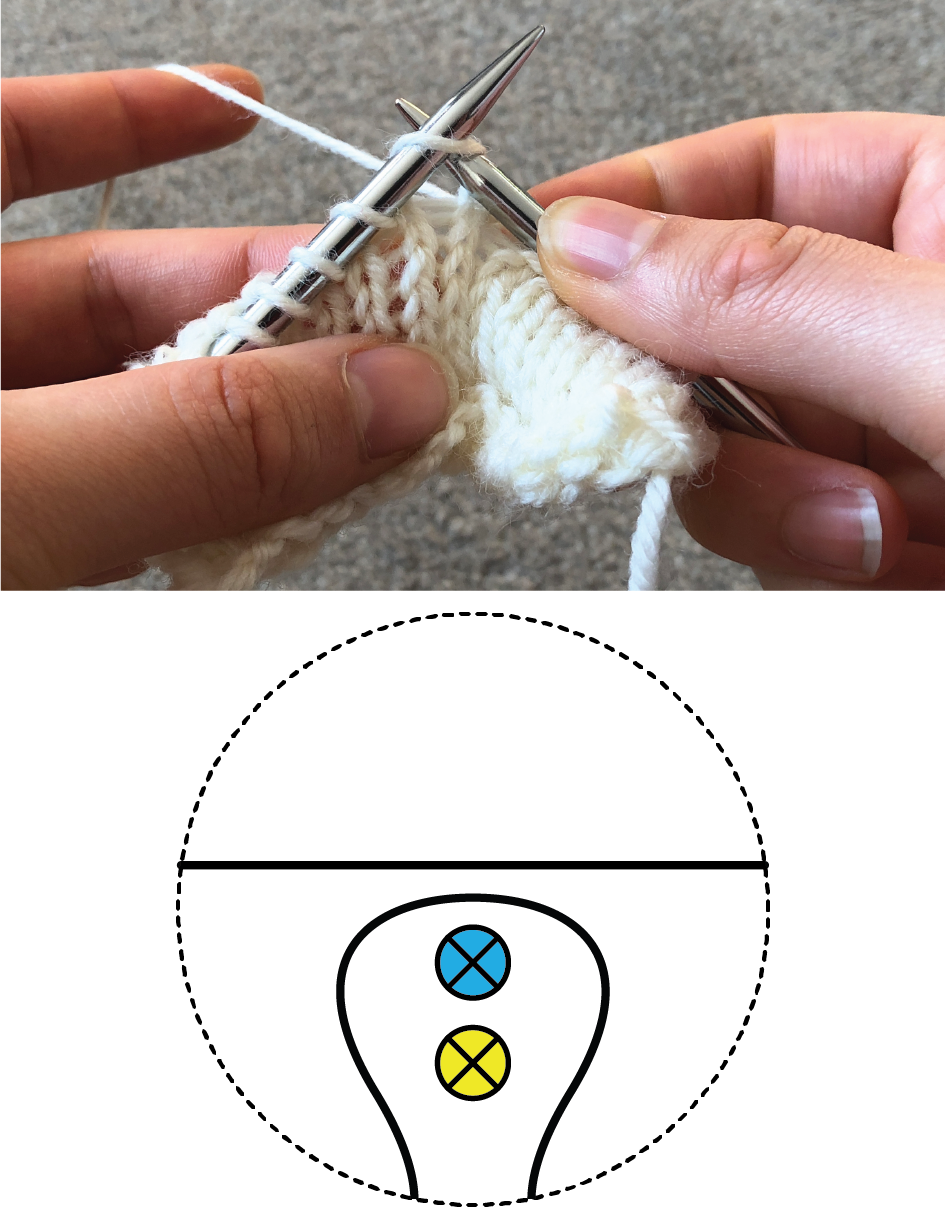} }
 \hspace{1.5mm}
\subfloat[The working end of the yarn (here, the left end) is wrapped around $n_r$ counterclockwise with the right needle facing away from us(top).] 
 {\includegraphics[width=60mm]{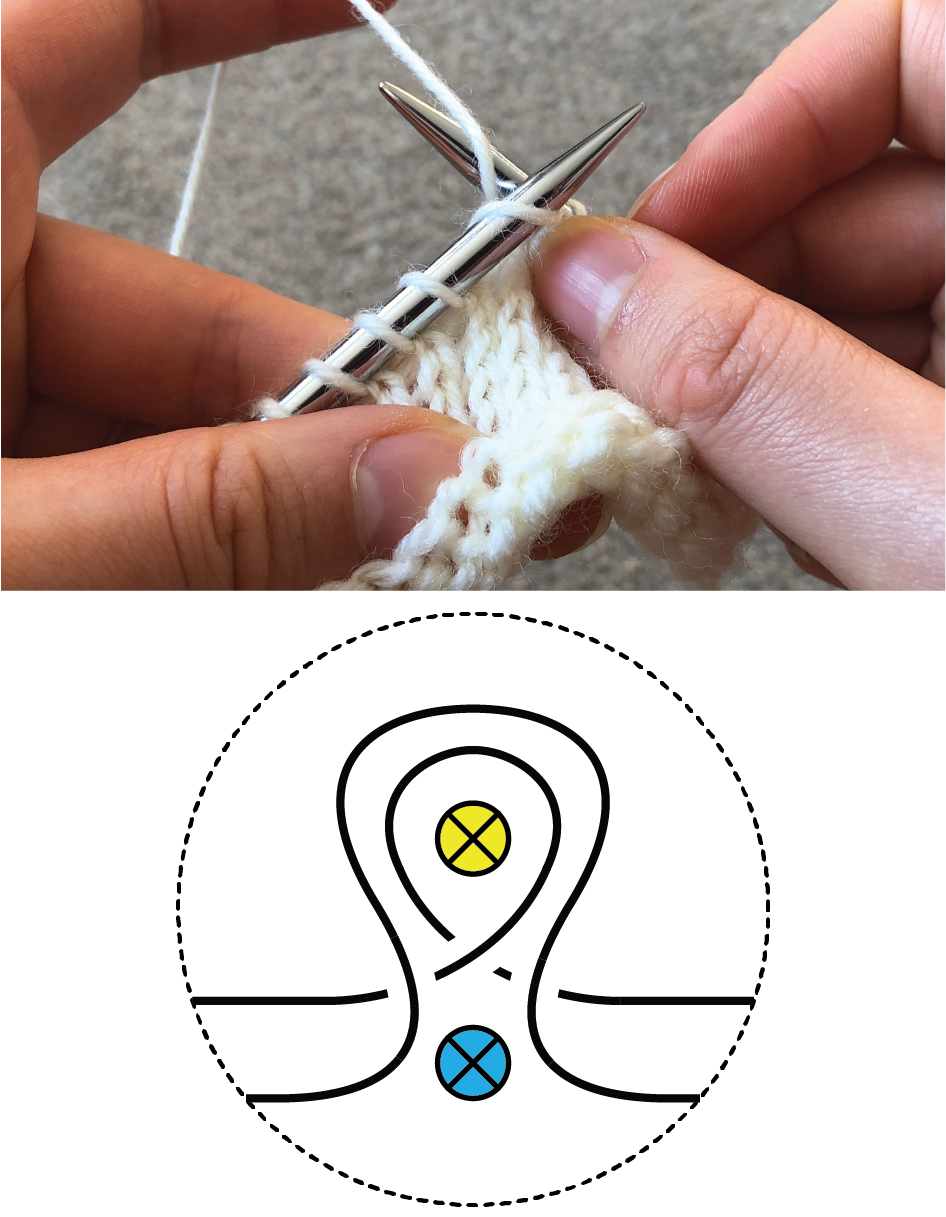}}\\
  \hspace{1.5mm}
\subfloat[The right needle $n_r$ along with the loop of working yarn is pulled through the bight (top). This creates a new bight caught around the right needle, and can be seen as a new stitch mounted on the right needle (bottom).] 
 {\includegraphics[width=60mm]{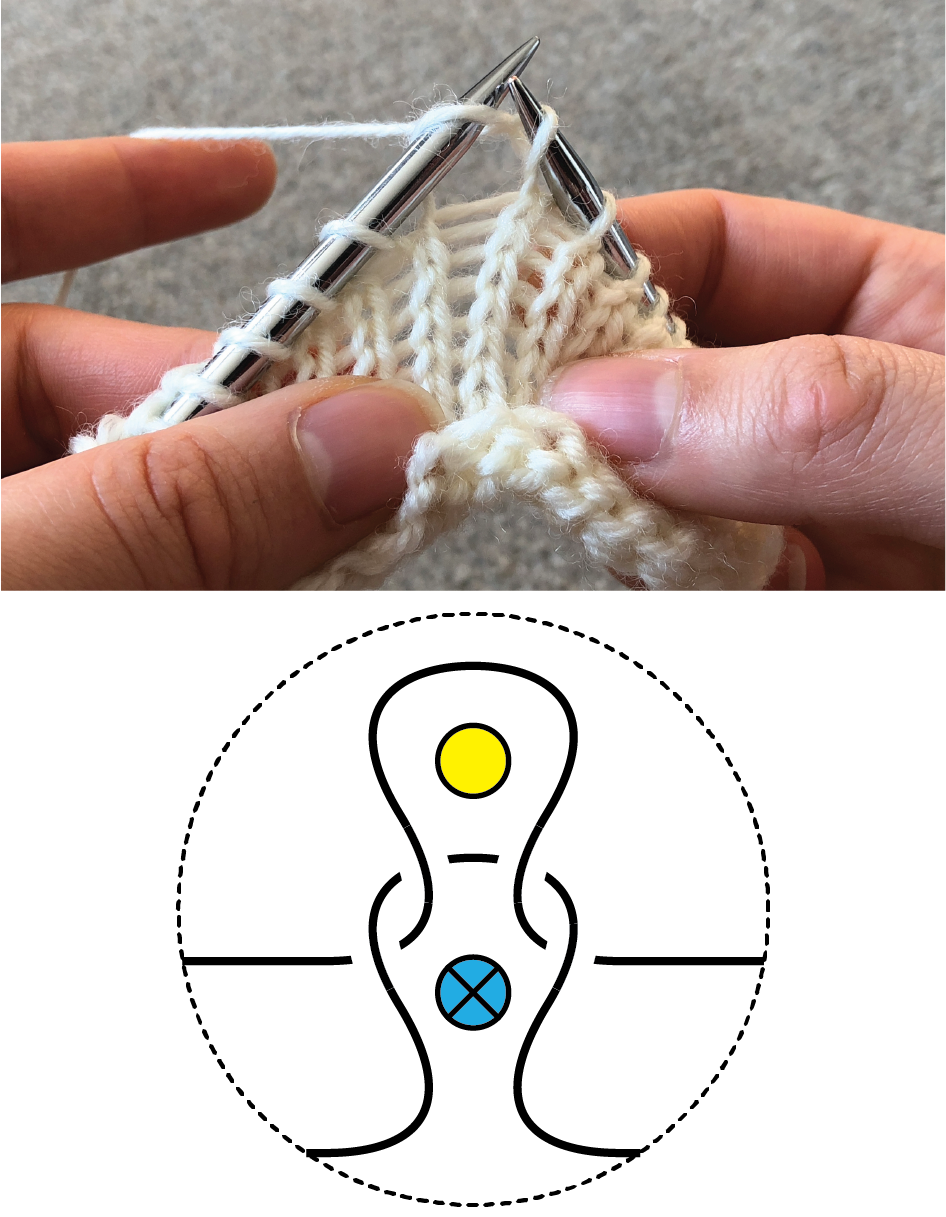} }
      \hspace{1.5mm}
    \subfloat[The initial bight through which $n_r$ was pushed through is now slipped off of $n_l$ (top). The blue disk is out of the frame since the newly created bight sits on $n_r$ holding the bight that we began with (bottom).]
     {\includegraphics[width=60mm]{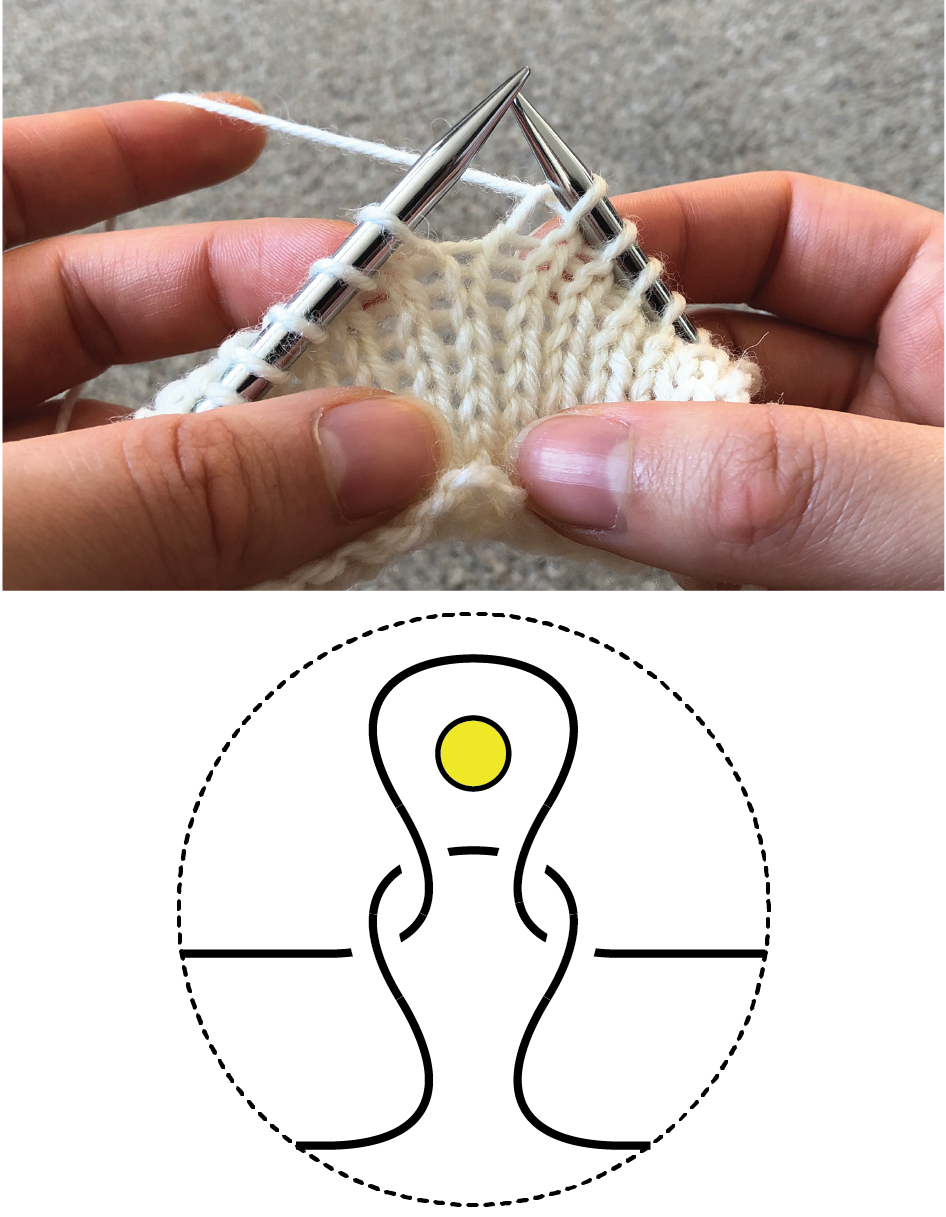} }\\

\caption{\label{fig:knitting}Construction of a basic knit stitch. The top images show 3D manipulations of the knitting needles and yarn. In planar knot diagrams (bottom images), the needles act like punctures in $\mathbb{R}^2$ that the yarn cannot pass through. When the pointed ends of the needles are facing into the page, they are marked with an (X) and when they are facing out of the page, they are marked with an (O).} 
\vspace{-5pt}
\end{figure*}

Knitting technology and knitted textiles have been a major part of the textile industry since the 16th century CE \cite{Albaron_1993}. Knitting technology plays a key role in the fashion \cite{Japel_2007}, arts $\&$ crafts \cite{Singh_2009} and knitwear industries \cite{Bernard_2008, Paden_2020}. However, it has also found a foothold in medicine \cite{Doser_2011, RAJENDRAN_2016}, electronics \cite{Bao_2012} \cite{Soin_2014}, and composite materials \cite{Tan_1997} \cite{Leong_2000}, \cite{ Weimer_2000} because of its high level of precision in designing materials with predetermined mechanical properties and 3D geometries -- even down to the composition of the fibers; unlike woven textiles, knits have a variety of physical attributes which can be systematically tuned and controlled. The emergent properties of a knitted textile are primarily determined by three things: the topology of the pattern with which the yarn is entangled to make the textile, the material properties of the filaments making up the yarn, and the geometry of how these filaments are assembled in each strand of yarn.

Broadly speaking, there are two kinds of knitted textiles: \emph{weft-knitted} textiles and \emph{warp-knitted} textiles. In a weft-knitted textile, the average path of the yarn traces a curve that goes horizontally along the width of the textile and then moves up by a row along the length of the textile. This is then repeated multiple times giving rise to a local planar sheet-like geometry. Therefore, a weft-knitted textile can be constructed by hand using yarn (a piece of long unknotted string) and a pair of needles. On the contrary, a warp-knitted textile is made of multiple disconnected pieces of yarn, each of which trace a two-column wide zigzag path along the entire length of the textile. Warp-knitted textiles cannot be made by hand.

The stockinette stitch pattern (also called jersey fabric) is widely used in making weft-knitted textiles. Its fundamental domain is a local motif called a \emph{knit stitch}, which is highlighted in Figure~\ref{fig:pattern_to_unitcell}(b)-(c). These motifs are the simplest possible slip loops or \emph{bights} \cite{AoB} -- \emph{a finite-length slack part of a piece of an unknotted string}. For example, the curved arc in Figure~\ref{fig:knitting}(a) bounding the yellow disk is a bight. As we show later, it turns out that weft-knitting entails different ways of tying knots into the bight and yields a lattice of slip knots. While making a piece of textile with stockinette stitch pattern, the knitter is working with two unknotted pieces of a single yarn, two needles (say, $n_r, n_l$), and a row of slip loops on $n_l$. The knitter then makes a new slip loop on the needle $n_r$ leading to an increment in the count of number of slip loops on $n_r$, and a decrement in the count of number of slip loops on $n_l$. The exact mechanical protocol involved in making a knit is illustrated in Figure~\ref{fig:knitting} starting from leftmost panel and ending at the rightmost panel (note that different knitting traditions have different orientation conventions). Below, we summarize this procedure in four steps. 

First, as shown in Figure~\ref{fig:knitting}(a), needle $\textrm{n}_r$ is pushed through the bight closest to an end of needle $\textrm{n}_l$. Second, as shown in Figure~\ref{fig:knitting}(b), the other piece of yarn is wrapped around needle $\textrm{n}_r$ in the clockwise direction. Next, as shown in Figure~\ref{fig:knitting}(c), needle $\textrm{n}_r$ is pulled out of the bight on needle $\textrm{n}_l$ along with the wrapped segment of yarn. Finally, the initial bight on needle $\textrm{n}_l$ is slipped off decreasing the count of total number of bights in the row by one. In the bottom panels of Figure~\ref{fig:knitting}, the planar diagram notation is used to describe the local configuration of yarn (in the upper panels) as a knit is constructed. 

In this paper, we begin to topologically classify a subset of weft-knitted textiles. To capture the topological properties (namely, the entanglement of strands of yarn and the knottiness of each strand) of a weft-knitted textile, we model the yarn as a smooth simple curve embedded in $\mathbb{R}^3$. This model, along with the lattice of translation symmetries of the textile's bulk structure, yields a correspondence between its two-periodic stitch patterns and the set of finite collections of entangled, knotted loops in the thickened torus $T^2\times[0,1]$. We call a knot or a link in $T^2\times[0,1]$ arising as a motif in the stitch pattern of a textile a \emph{textile link}. We study the topological properties of textiles by applying tools from the theory of knots and links to these textile links.  

Given an $n$-component link $L$ in $T^2\times[0,1]$, we obtain an $(n+2)$-component link $H\cup f(L)\subset S^3$ by embedding the thickened torus, $T^2\times[0,1]$ into the 3-sphere, $S^3$ as the exterior of a Hopf link denoted by $H$. This embedding is obtained by Dehn filling $T^2\times[0,1]$ along the marked curves (shown in Figure~\ref{fig:T2xI}(a)) on the boundary tori. Note that Dehn fillings are uniquely defined by specifying the filling curves as detailed in \cite{Rolfsen_1976}.

The bottom panels of Figure~\ref{fig:knitting}(a)-(d) use arcs and colored disks to depict the linking between two segments of a single strand of yarn resulting from pulling one loop through the other. The last panel indicates that if these arcs were parts of the boundary of a (possibly disconnected) smooth surface, then the surface would intersect itself in a slit lying in its interior. This observation, along with the correspondence between links in $T^2\times[0,1]$ and links in $S^3$, informs our hypothesis that textile links derived from two-periodic weft-knitted textiles give rise to ribbon links in $S^3$ via Dehn filling the two boundary components of $T^2 \times I$. Recall that a n-component link $L$ in $S^3$ is slice if its components bound n disjoint smooth, properly embedded disks in $B^4$, and is ribbon if there is a Morse function on $B^4$ such that the aforementioned disks only have minima with respect to this function. Equivalently, $L$ is ribbon if its components bound $n$ disjoint, smooth, immersed disks in $S^3$ where all intersections are ribbon intersections \cite{foxmilnor}.

We prove that all textile links give rise to ribbon links in $S^3$ in Section 2 and state the result in the following theorem. Therefore, we can obstruct a link from being a textile link using invariants of slice and ribbon links.

\vspace{.5cm}
\noindent \textbf{Theorem}~\ref{theorem:ribbon}: Let $S$ be the stitch pattern of a two-periodic weft-knitted textile, and let $L^{S}\subset T^2\times[0,1]$ be the corresponding $n$-component textile link. Then the link $f(L^{S})\subset S^3$ (where $f$ is the Dehn filling described above) is an $n$-component ribbon link.
\vspace{.5cm}

As we are using links to model the physical act of knitting, we will use different types of links in order to capture a variety of knitting data. The knitting motif itself is captured by a \emph{textile link} in $T^2 \times [0,1]$ as the motif is doubly periodic. However, to represent the process of knitting we must introduce a more subtle idea. This is because a textile link does not have a clear row being knit as described in the process in Figure \ref{fig:knitting}; therefore we have the notion of a \emph{swatch} (defined in Definition \ref{def:swatch} ) to better capture a motif with a row that is being actively worked. While a swatch is also a knot or link in $T^2\times[0,1]$, they must be obtained in a specific way that exactly represents how physical stitches are knitted. A swatch is obtained by performing ambient isotopies of \emph{unknits} (unlinks in the thickened torus whose components are either null-homotopic or essential loops that are homotopic to the longitude of the base torus)~\ref{def:unknit} followed by \emph{band surgeries}~\ref{def:bs}. As we detail in the next section, working with swatches instead of arbitrary textile links allows us to set an upper limit to how complex a weft-knitted textile motif can be. We also use this notion to prove \textbf{Theorem}~\ref{theorem:ribbon}.

\begin{figure*}
\centering
{\includegraphics[width=140mm]{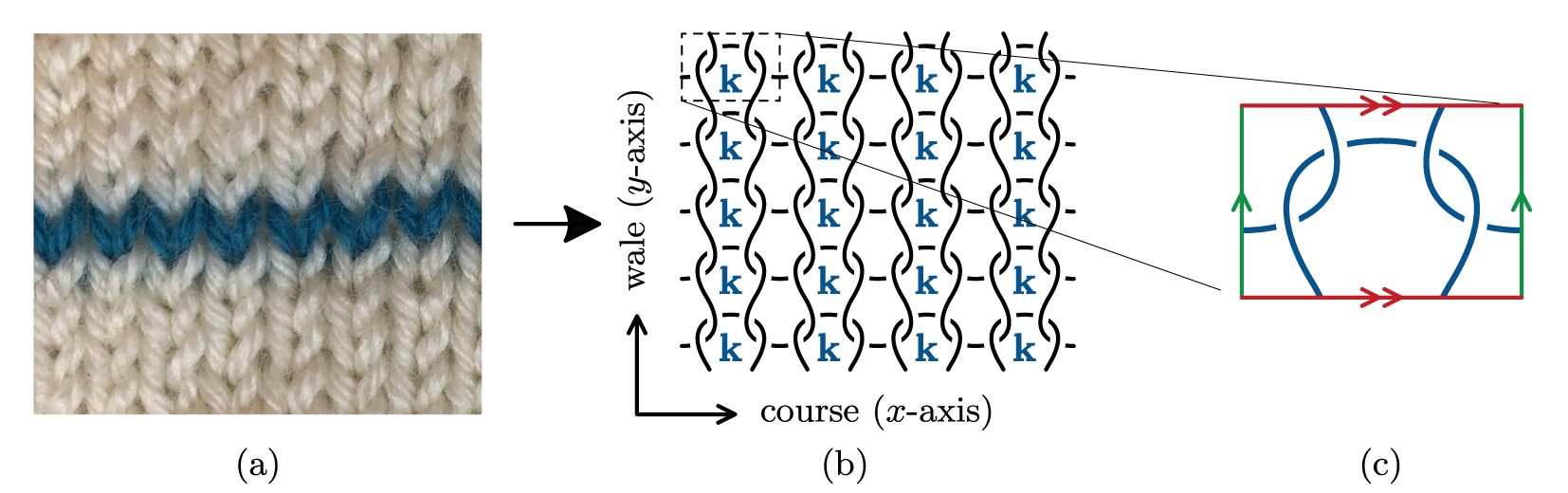}}
\caption{\label{fig:pattern_to_unitcell}(a) The bulk of stockinette fabric. (b) A space curve representation of a finite patch of the two-periodic knit projected orthogonally on to the $xy$-plane. The block in red is the fundamental translational unit and the symbol \emph{k} denotes the repeating motifs known as \emph{knits}. (c) The planar diagram of the corresponding textile knot in $T^2\times[0,1]$.} 
\end{figure*}

Throughout this paper, the discussion is driven by mainly by two questions. The first question is: \emph{how can we define weft-knitability?} That is, how can we distinguish weft-knitable two-periodic patterns of space curves from the rest? We realized upon careful examination that the notion of weft-knitability is not well-defined mathematically unless we consider only machine weft-knitting. However, even in case of machine weft-knitting, whether a pattern is knittable depends on the machine. This is due to differences in the set of mechanical operations each machine is capable of. Thus, the complexity of a stitch pattern for a weft-knitted textile is determined by the physical limitations of the person or machine knitting it. In our study, we ignore the dependency on the knitter and introduce a set of swatches that includes all textile links arising as motifs of two-periodic weft-knitted textiles. 
 
The second question: \emph{can we formulate a language of two-periodic weft-knitted fabrics with syntax and structure governed by the machine's (or knitter's) mechanical moves}?  McCann et al \cite{McCann_2016} developed a programming language to generate codes which are fed into a knitting machine. In this language, a symbol is assigned to every elementary operation performed by the needles in the needle bed. These symbols serve as the ``letters" of the alphabet. We take a different approach in specifying the alphabet and syntax, where the former is determined by when a 3-manifold is irreducible under an associative binary operation acting on swatches. This operation acts on 3-manifolds obtained by cutting the link complements of swatches along the meridional or the longitudinal cross-sections of the thickened torus followed by gluing the resulting annular boundaries. This operation is called an \emph{annulus sum}. A basic example of an annulus sum is given by cutting and gluing the link complements of a knit swatch (motif in stockinette fabric) and a purl swatch (motif in reverse stockinette fabric) to obtain the link complement of a $1\times1$ rib swatch. The syntax and grammar are determined by how we cut and glue link complements of swatches along their meridional and longitudinal cross-sections.   

While answering these questions of knitability and developing a weft-knitting grammar, we consider the problem of classification and irreducibility of textile links which arise as swatches. Irreducible swatches are those that cannot be simplified using annulus sums, which yield two ways of combining and reducing swatches. Thus, annulus sums define associative binary operations acting on the link complements of swatches, which in turn induce an algebra on link invariants of links associated with swatches. Previously, there have been studies along these lines by Grishanov \textit{et al} on the Kauffman polynomial \cite{GRISHANOV_2007}, Morton \textit{et al} on multivariable Alexander polynomials \cite{MORTON_2009}, and Champanerkar \textit{et al} on hyperbolic volume \cite{Champanerkar_19}.

\begin{figure*}
\centering
\subfloat[{reverse stockinette}.]
 {\includegraphics[width=70mm]{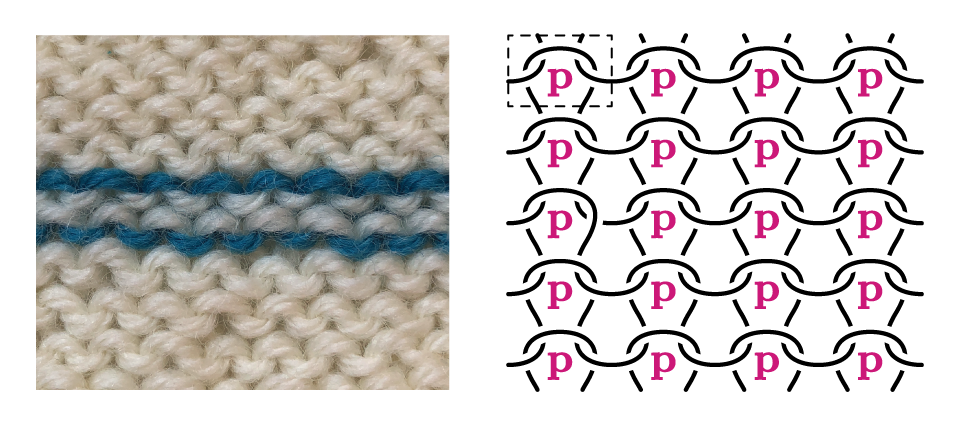} }
\hspace{4mm}
\subfloat[{1$\times$1 rib}.]
 {\includegraphics[width=70mm]{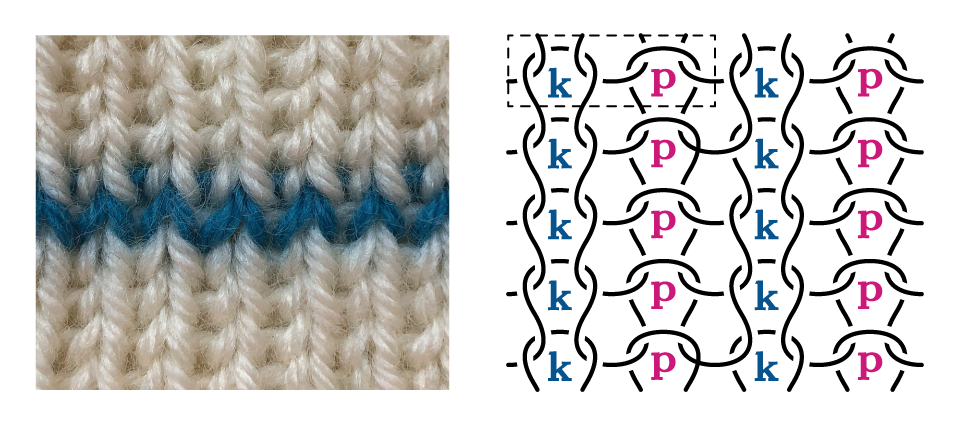} }\\
 \subfloat[{garter}.]
 {\includegraphics[width=70mm]{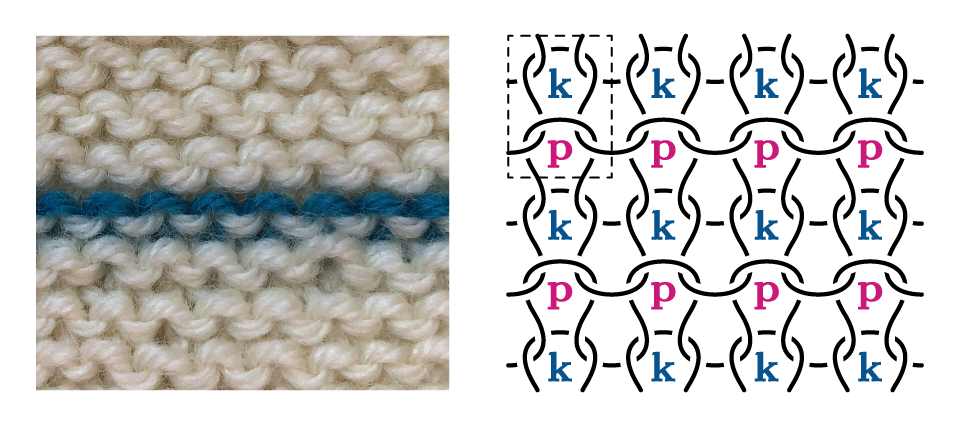} }
\hspace{4mm}
\subfloat[{seed}.]
 {\includegraphics[width=70mm]{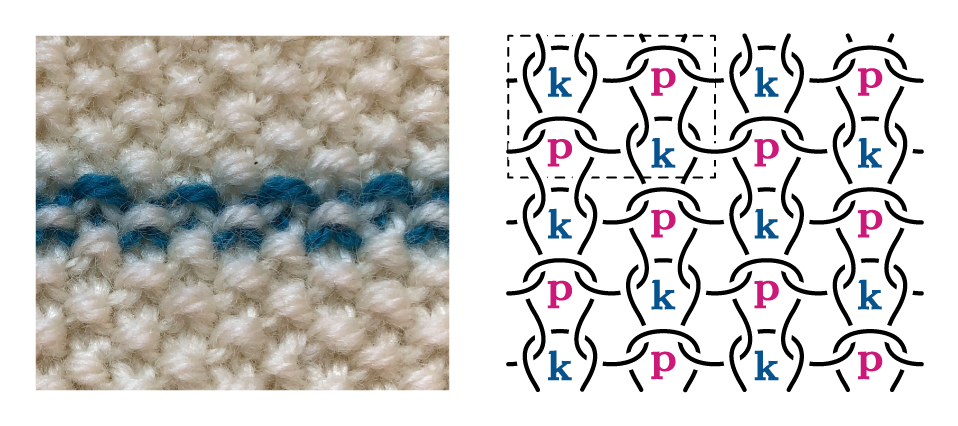} }
\caption{\label{fig:stitch_patterns} Images of some weft-knitted fabrics made from knit and purl stitches and the corresponding two-periodic versions.}
\end{figure*}   

We use \texttt{SnapPy} \cite{SnapPy} -- an open source python based software for studying low-dimensional topology -- and a {\tt Mathematica} based package {\tt KnotTheory} \cite{KTP}, to compute invariants of links in $S^3$. By examining the data for patterns, we propose the following conjectures governing some of the link invariants of links associated to swatches (the set of which includes textile links that can be knit into weft-knitted textiles).
\\\\
\textbf{Conjecture}~\ref{conjecture:hyp} [Hyperbolic swatches]: Let $L\subset T^2\times I$ be a swatch whose link complement can be obtained by combining link complements of a finite number of knits (knit swatches) and purls (purl swatches) using annulus sums~\ref{def:gluing}. Then the link $H\cup f(L)\subset S^3$ is hyperbolic.
\\\\ We have observed that there exist swatches that give rise to non-hyperbolic links, and these consist of irreducible swatches other than just knits and purls.
\\\\
\noindent\textbf{Conjecture}~\ref{conjecture:vol_sum} [Hyperbolic volume of one-component hyperbolic swatches]: Given two three-component hyperbolic links $H\cup f(L_1)\subset S^3$ and $H\cup f(L_2)\subset S^3$ corresponding to swatches $L_1\subset T^2\times I$ and $L_2\subset T^2\times I$, the hyperbolic volume of the three component link $H\cup f(L_1*_mL_2)\subset S^3$ corresponding to the swatch $L_1*_mL_2\subset T^2\times I$ is equal to the sum of the hyperbolic volumes of links $H\cup f(L_1)\subset S^3$ and $H\cup f(L_2)\subset S^3$. 
\\\\The link complement of the swatch $L_1*_mL_2$ is obtained by meridional annulus sum of the link complements of swatches $L_1$ and $L_2$. 
\\\\
\noindent\textbf{Conjecture}~\ref{conjecture:mva} [MVA of links corresponding to swatches]: Let $L_1,L_2\subset T^2\times I$ be two $n$-component links that arise as swatches. Let $H\cup f(L_1)\subset S^3$ and $H\cup f(L_2)\subset S^3$ be the corresponding $(n+2)$-component links. The multivariable Alexander polynomial of the $(n+2)$-component link $H\cup f(L_1*_mL_2)\subset S^3$ is given by the product of multivariable Alexander polynomials of $H\cup f(L_1)\subset S^3$ and $H\cup f(L_2)\subset S^3$ divided by the factor $(t_1-1)^n$. The variable $t_1$ is associated with the component $f(m)\subset H$ of the Hopf link.
\\\\
Many existing studies have focused on using topological invariants to distinguish between textile links, which is only a part of the classification problem. We add to the existing body of work on classifying weft-knitted stitch patterns (and thus characterizing weft-knitted textiles) by using link invariants as a tool to translate between knitting moves and rules for combining swatches.

\section{Two-periodic weft-knitted textiles: construction of swatches}
Although sweaters and socks may be the most recognizable knitted objects, commercial knitted textiles come in large bolts and are comprised of many repeating configurations. Ignoring the boundaries is a convenient way to capture the entanglement and knottiness of the strands of yarn in the bulk fabric. The process of knitting acts on a square lattice. 
All knitted textiles with translationally symmetric configurations can be extended to infinite \emph{two-periodic knits}.

By tracing the core of the yarn in a two-periodic knit, we obtain a countably infinite collection of infinitely long, embedded space curves. 
Since we are interested in the topological properties, we need only consider these space curves up to \emph{ambient isotopy}.

\subsection{Fundamental translational units and textile links}
\label{sec:fundamental_translational_unit}

In a general context, the notion of stitch is nuanced as it is used in different contexts in different communities of scholars. The process of making a ``stitch" in hand-knitting starts by taking loop on the left needle then using the right needle to pull a loop of working yarn through the loop on the left needle. This loop is then secured on the right needle. The process repeats with the next loop on the left needle. All of the new loops created on the right needle are held by the loops that were on the left needle. In case of stockinette textile, the making of which is described in Figure~\ref{fig:knitting}, the repeating motif in which one loop is held by a loop in the row immediately below is roughly what one means by a stitch. Although the idea of a ``stitch" is nuanced, we can associate a unique knot or link to a two-periodic knitted textile. 

Given a two-periodic weft-knitted textile, there is an average horizontal direction along which infinitely long strands of yarn are situated. In knitting literature, this direction is called the \emph{course direction} (or weft direction). Joining approximate centers of two or more interlocked bights across adjacent rows yields another average direction, called the \emph{wale direction} (or warp direction). The course and the wale directions are linearly independent and span a plane. 
A distinguishing feature between the wale and the course directions is, while bights in the course direction belong to a single string or a space curve, bights in the wale direction are parts of distinct space curves. The course and wale directions for five different textile samples are shown in Figure~\ref{fig:stitch_patterns}
and Figure~\ref{fig:pattern_to_unitcell}.

The weft and wale directions together provide a natural choice for picking coordinate axes. Without loss of generality, we choose the positive $x$-axis and $y$-axis to point along the course and the wale directions respectively. The orientation of the $z$-axis follows from the right hand rule. The periodicity due to the underlying translational symmetries induces a quotient map yielding a fundamental translational unit or a tile, which is homeomorphic to the 3-manifold $T^2\times\mathbb{R}$. We will call the curve traced by an edge parallel to the $y$-axis (aligned along the wale direction) on $T^2$ a \emph{meridian}, and one that is parallel to the $x$-axis (aligned along the course direction) is called a \emph{longitude}. This protocol is described in Figure~\ref{fig:pattern_to_unitcell} for the stockinette fabric, where the symbol \textbf{k} denotes a single \emph{knit} motif within the square block. The identification of the boundary edges of the block indicate that the ambient 3-manifold is homeomorphic to $T^2\times\mathbb{R}$. Similarly, the fundamental translational units of reverse stockinette (stockinette fabric viewed from back or the wrong side), $1\times1$ rib fabric, garter fabric and seed fabric are shown in Figure~\ref{fig:stitch_patterns}(a)-(d), respectively. The symbol \textbf{p} denotes a \emph{purl} motif. Since two-periodic weft-knitted textiles extend only a finite amount in the $z$-direction, we choose to describe the tiling units 
by the 3-manifold $T^2\times [0,1]$. The 3-manifold $T^2\times [0,1]$, called the thickened torus, is homeomorphic to the closure of the 3-manifold $T^2\times(0,1)$ (which is homeomorphic to $T^2\times\mathbb{R}$) inside $S^3$ or $\mathbb{R}^3$. Throughout the paper we will be denoting the thickened torus as $T^2\times I$.

The restriction of the quotient map to the corresponding collection of embedded space curves yields a \emph{knot} (or a \emph{link}) -- an embedding of a circle (or a finite collection of circles) into a 3-manifold. We will call links in $T^2\times I$ generated by this procedure as \emph{textile links} \cite{Markande_2020}. Now that we have established a correspondence between stitch patterns of two-periodic weft-knitted textiles and links in $T^2\times I$, we can study topological properties of the former by analyzing topological invariants of the latter. Many standard computational knot theory software packages (eg. \texttt{SnapPy} and \texttt{Knottheory}) are designed to work with knots and links in $S^3$. Therefore, we will Dehn fill $T^2\times I$ to get the set of links in $S^3$.

\subsection{From links in $T^2\times I$ to links in $S^3$} 

\begin{figure}
\centering
 \includegraphics[width=84mm]{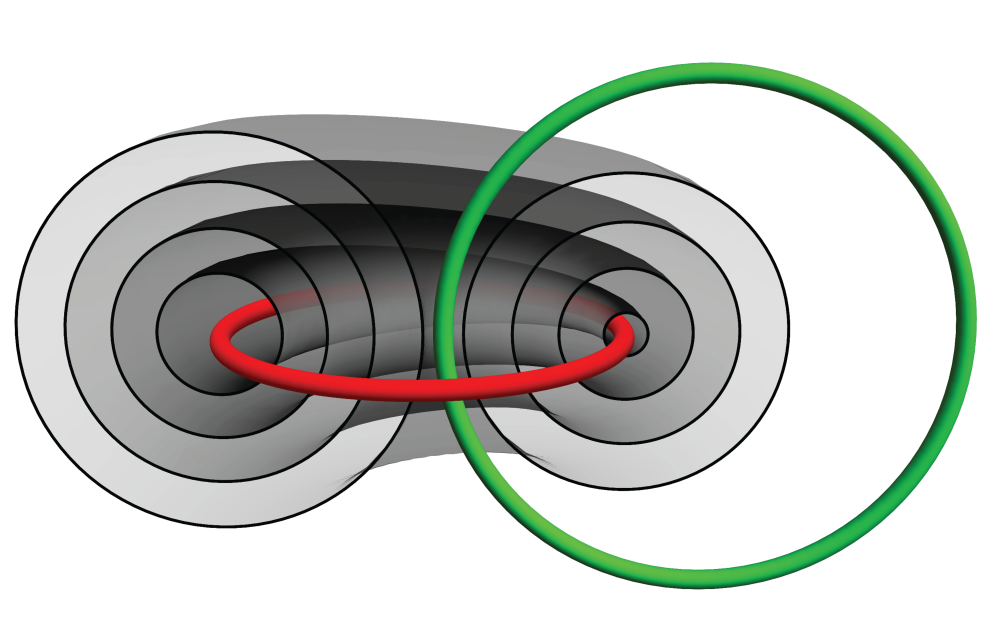}
\caption{\label{fig:hopf_link}The Hopf link, $H\subset\mathbb{R}^3$ along with multiple two-dimensional open sliced tori embedded in $\mathbb{R}^3\setminus H$. The complement of the Hopf link is $T^2 \times I$.} 
\end{figure}

Though we have used the term link complement throughout the introduction, in the interest of accessibility we will precisely define it here.

\begin{definition}[Link complement]
\label{definition:lc}
Given an n-component link $L$ in a 3-manifold $M$, let $\nu(L)$ be an open tubular neighborhood around the link $L$. We call the 3-manifold $M\setminus \nu(L)$ the link complement of $L$ in $M$. 
\end{definition}   
 A tubular neighborhood of the Hopf link, shown in Figure~\ref{fig:hopf_link}, is given by the union of a pair of disjoint tubular neighborhoods of its two components.
\noindent As a result, we see the ambient space that textile links are embedded in is homeomorphic to the complement of the Hopf link in $S^3$: the thickened torus $T^2\times I$. Thus, by choosing a map to embed the complement $Y=T^2 \times I \setminus \nu(L)$ of a textile link into $S^3$ (through \emph{Dehn filling} the boundary tori of $T^2\times I$), one can obtain a 3-manifold $Y^{\prime}$ that is homeomorphic to the complement of a link in $S^3$ that is associated with the original textile link.

 Note that the 3-manifold $Y^{\prime}$ in the above definition is uniquely determined up to homeomorphism using the Dehn filling construction \cite{Rolfsen_1976}.

\begin{figure}[h!]
\centering
\subfloat[A tubular neighborhood of a textile knot $K$, $\mathcal{T}_K\subset T^2\times I$. The green (left-right) and red (top-bottom) surfaces in $T^2 \times I$ are a pair of annuli that intersect along a single line. The horizontal and vertical markings denote meridians of the two boundary components (tori) that are highlighted as checkered purple and orange rectangles at the front and rear.]
 {\includegraphics[width=50mm]{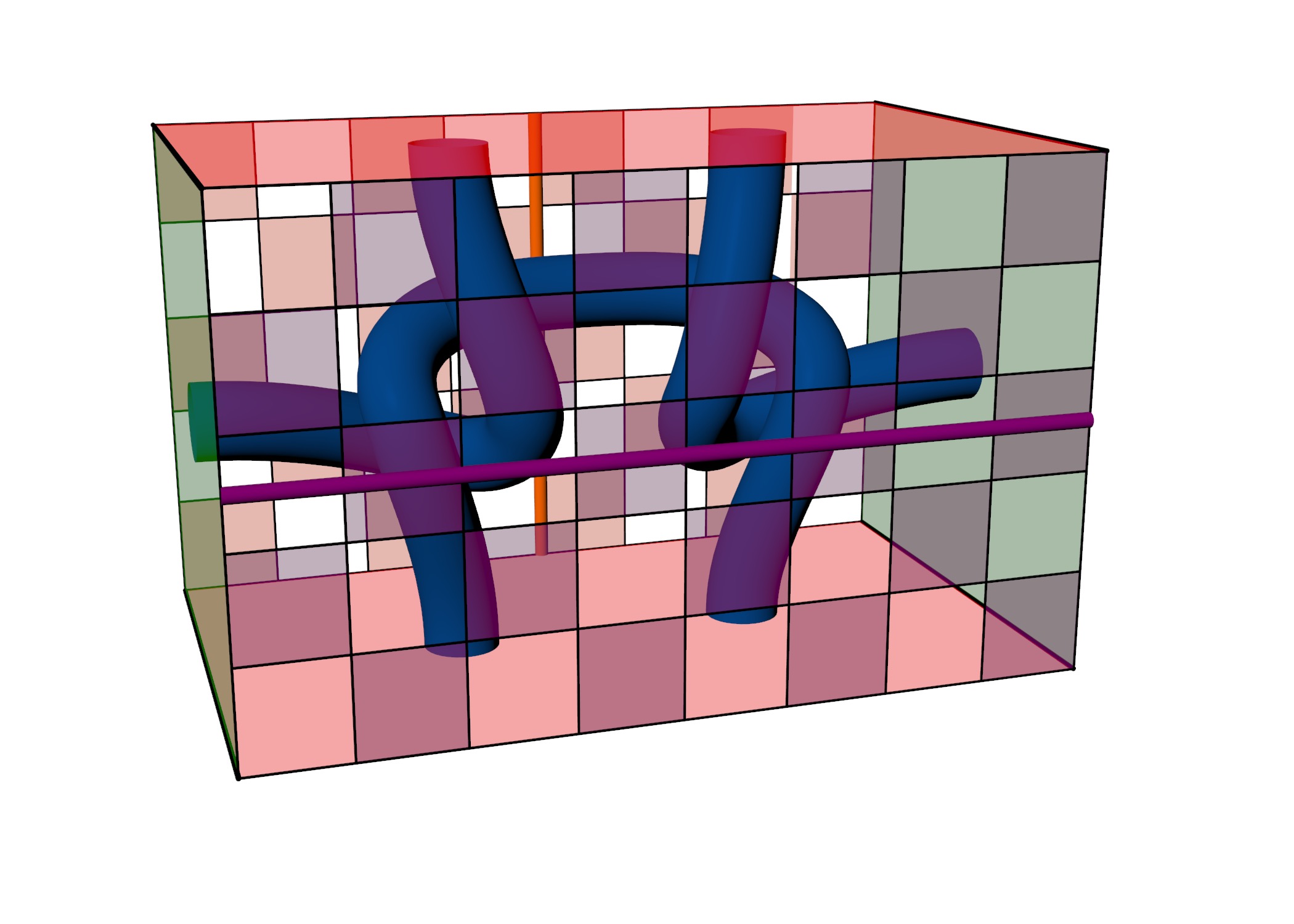} }
 \hspace{2mm}
\subfloat[A tubular neighborhood of the Hopf link $H\subset S^3$. The circular markings denote meridians of the boundary tori.]
 {\includegraphics[width=50mm]{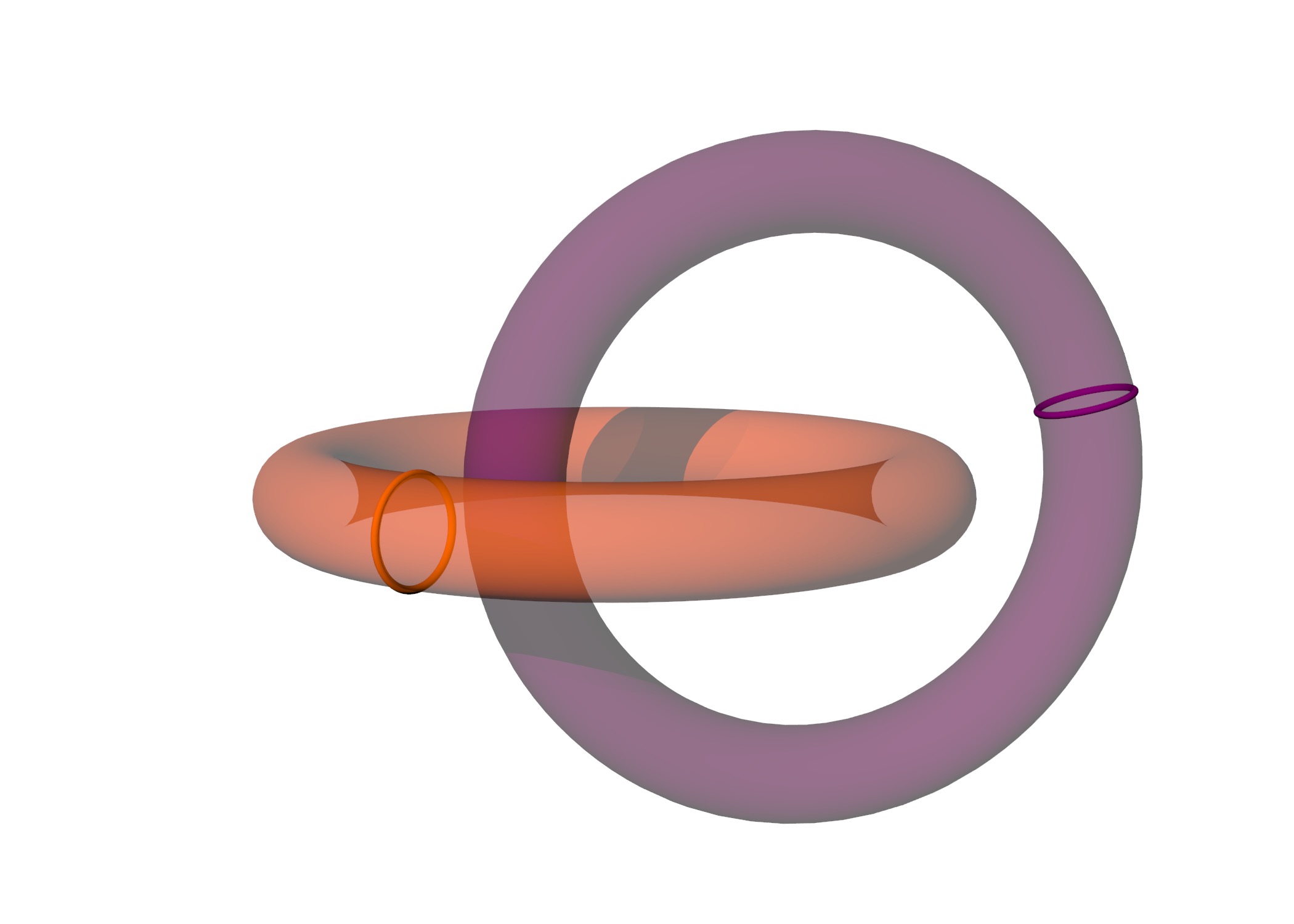} }
 \hspace{2mm}
\subfloat[Dehn filling the 3-manifold $(T^2\times I)\setminus\mathcal{T}_K$ using the mapping $f$ specified by the slopes of the purple and orange markings in (a) and (b) leads to $S^3\setminus\mathcal{T}_{f(K)}$. By the above discussion, the former is homeomorphic to the 3-manifold, $S^3\setminus\mathcal{T}_{H\cup f(K)}.$]
 {\includegraphics[width=50mm]{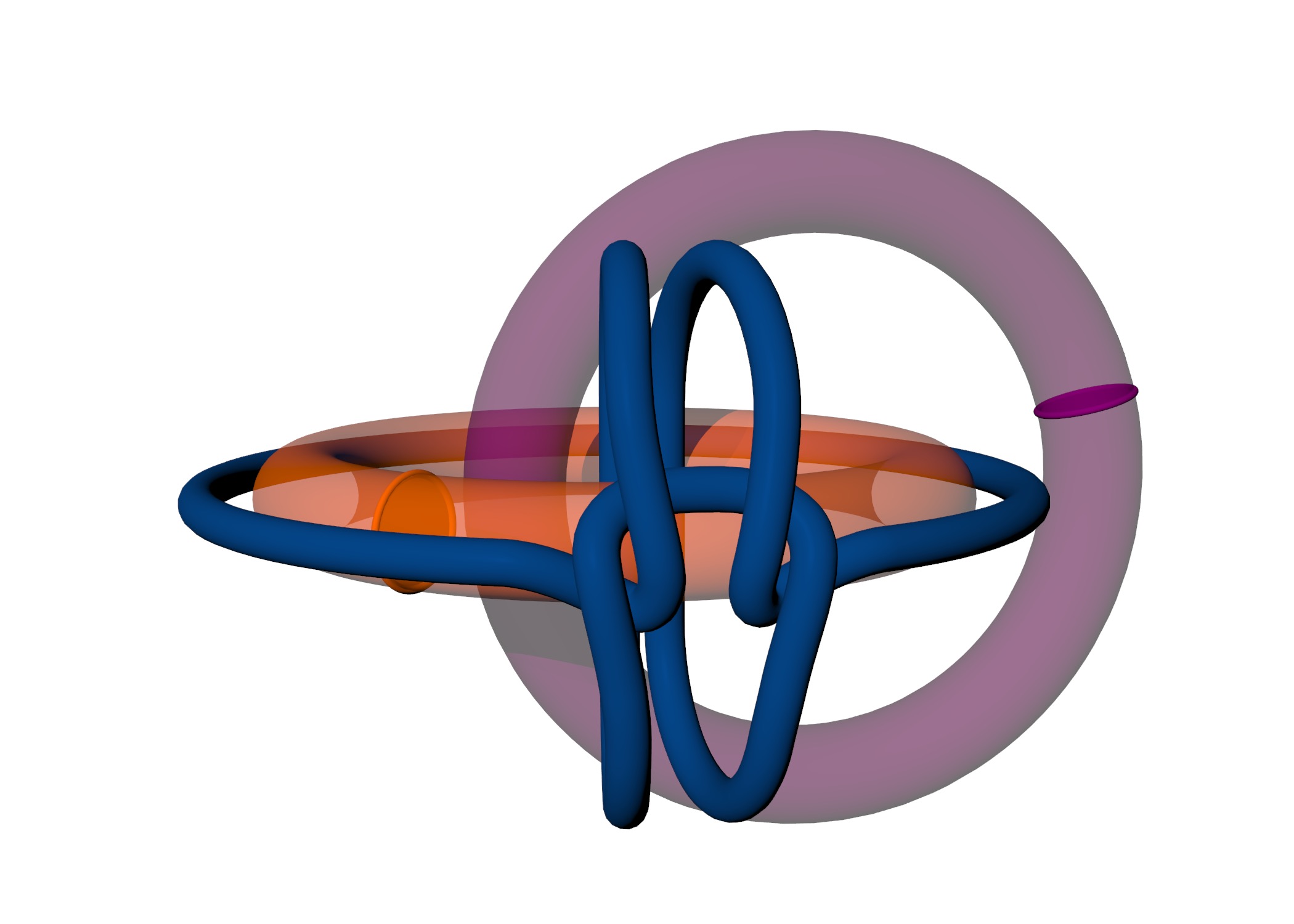} }
\vspace{-5pt}
\caption{\label{fig:T2xI}Textile knots naturally live in $T^2\times I$. However, we can construct textile knots as three component links in $S^3$ as illustrated in the above figures.} 
\end{figure}

Therefore we can view the complement of an $n$-component textile link $L$ in two ways: as
$T^2~\times~I~\setminus~\nu(L)$, or as $S^3 \setminus \nu(H \cup f(L))$ where $H$ is the Hopf link formed by the two boundary components of $T^2 \times I$ and $f(L)$ is the image of $L$ after Dehn filling $H$ as in Figure \ref{fig:T2xI}.

Given a two-periodic weft-knitted textile, the basis vectors along the \emph{course} and the \emph{wale} directions are used to obtain a translational unit. The translational units tile the two-periodic stitch pattern, forming an integer lattice. Any parallelogram of unit area with its vertices in this integer lattice tiles the underlying stitch pattern. 

Suppose the vectors $\{(p_1,q_1),(p_2,q_2)\}$ define such a parallelogram tile, where $p_1,p_2,q_1,q_2\in\mathbb{Z}$ are subject to the unit area condition. Then, $p_1q_2-p_2q_1$ must be equal to one. For $p_1,q_2$ equal to one and $p_2,q_1$ equal to zero, we have the standard tile with a square base, and the Dehn filling of $T^2\times I$ that yields $S^3$ is specified by the purple and orange curves shown in Figure~\ref{fig:T2xI}. Notice that in the basis $\{(1,0),(0,1)\}$, Dehn filling $T^2\times I$ with the square base along curves of slopes $p_1/q_1$ and $p_2/q_2$ is homeomorphic to the Dehn filling $T^2\times I$ with the parallelogram base (i.e. $\{(p_1,q_1),(p_2,q_2)\}$, along curves of slopes equal to $1/0$ and $0/1$). 
\begin{remark}\label{remark:dt}
Any $2 \times 2$ matrix chosen from $SL(2,\mathbb{Z})$, the group of determinant one matrices with integer entries,  gives rise to an area-preserving automorphism of $T^2\times I$. 
\end{remark}

The Dehn filling procedure and the \textbf{Remark}~\ref{remark:dt} together imply that any translational unit with a unit area parallelogram as its base yields a homeomorphism between the complement of a textile link $L\subset T^2\times I$ and the complement of a link $H\cup f(L)\subset S^3$.   
  
\subsection{Swatches}
Now that we have discussed textile links (which model a finished fabric), we will move on to introduce swatches, which model the act of knitting the fabric itself. In order to do this, we must first understand what it means for two knots or links to be the same.

Intuitively, we say two knots (or links with the same number of components) in a 3-manifold are \textit{ambiently isotopic} to each other if one can be continuously transformed into the other without cutting and pasting any segment of the associated space curve or without passing a pair of segments through each other. Such a transformation yields a continuous deformation of the ambient 3-manifold called an \emph{ambient isotopy}, which is a continuous family of orientation-preserving homeomorphisms, starting from the identity homeomorphism, from the ambient space to itself. 

We consider knots equivalent up to ambient isotopy (for discussion of knots up to ambient isotopy in arbitrary 3-manifolds, see \cite{ABCPR}). Similarly, for two links $L_1$ and $L_2$ in a 3-manifold $M$, the link $L_1\subset M$ is isotopic to the link $L_2\subset M$ if there exists an isotopy of $M$ whose restriction to the submanifold $L_1\subset M$ deforms it to the submanifold $L_2\subset M$ through a continuous family of homeomorphisms. If $M$ is either $S^3$ or $\mathbb{R}^3$, then an isotopy of $M$ acting on links is equivalent to performing a finite number of local moves on the link diagrams \cite{Reidemeister_1927, Alexander_1926} up to planar isotopies. These moves shown in Figure~\ref{fig:rm}(a) are called the \emph{Reidemeister moves} \cite{Reidemeister_1927}. An ambient isotopy between two links in $S^3$ in terms of Reidemeister moves and planar isotopies acting on link diagrams is shown in Figure~\ref{fig:rm}(b). We will consider textile links equivalent up to ambient isotopy in the same way; two textile links are isotopic if their respective components are related by a sequence of Reideimester moves and planar isotopies. For brevity, in the rest of the paper we will refer to the first Reidemeister move by RM1, second Reidemeister move by RM2, and the third Reidemeister move by RM3. 

\begin{figure*}[h!]
{\includegraphics[width=140mm]{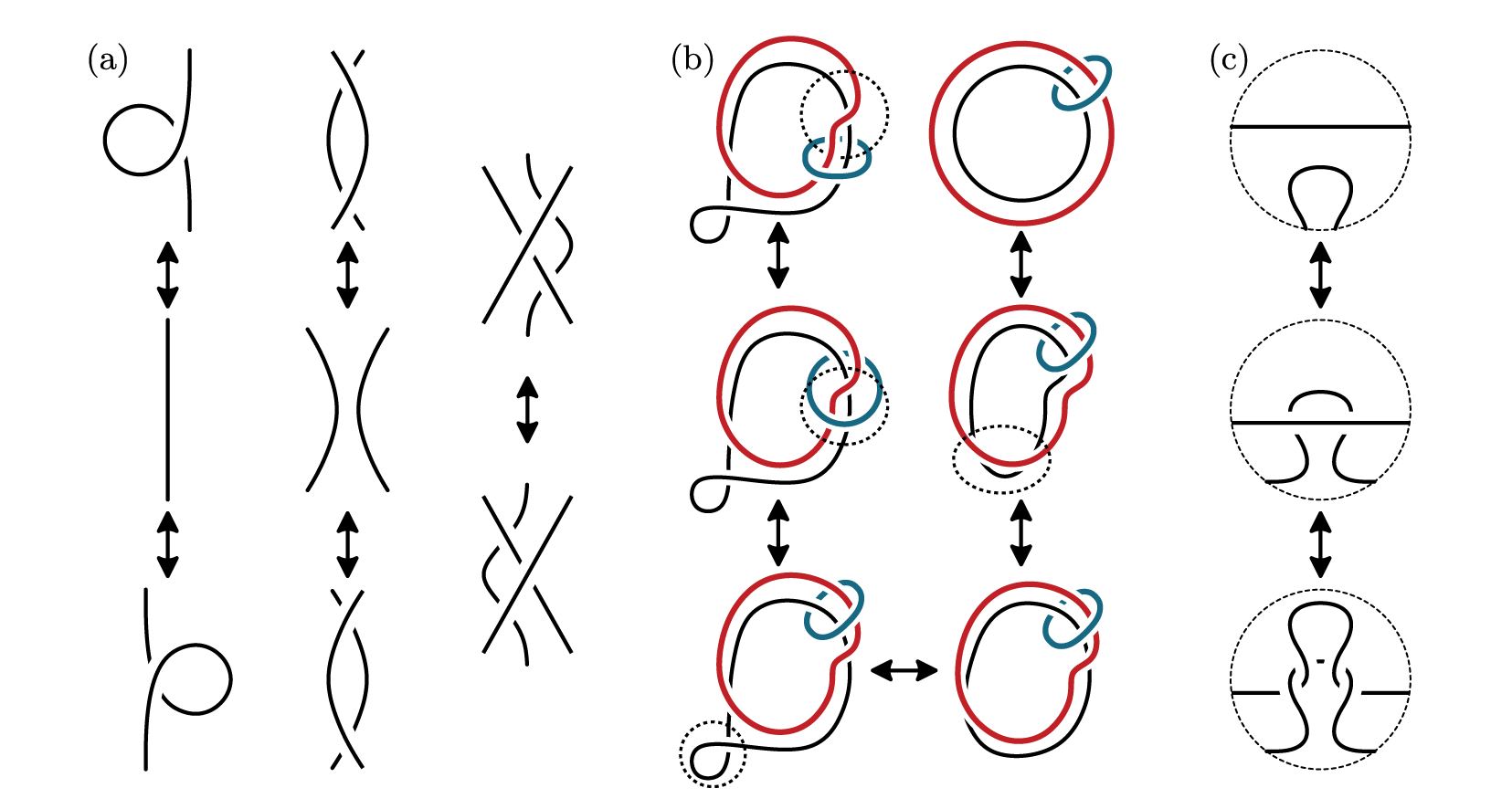}}

\caption{(a) The three Reidemeister moves. (b) An example of isotopic $3$-component links and the series of Reidemeister moves and planar isotopies showing their equivalence. (c) An illustration of the progression in making the \emph{knit} motif in the planar diagram notation\ref{fig:knitting}.} 
\label{fig:rm}
\end{figure*} 

Recall that while knitting a stockinette fabric, a slip loop (or a bight)  is constructed via the following four steps: 1) Push the right needle into the bight closest to the end of the left needle, 2) wrap the working end of the yarn around the right needle in clockwise direction (when facing into the page), 3) pull the right needle along with the wrapped segment of yarn through the bight on the left needle, and 4) slip the left needle out of the bight it is holding. The local motifs shown in Figure~\ref{fig:knitting} illustrating the above steps are shown again in Figure~\ref{fig:rm}(c) without the colored disks that represent the knitting needles. We note by an inspection of Figure~\ref{fig:rm}(a) $\&$ Figure~\ref{fig:rm}(c) that making a slip loop while knitting stockinette fabric is equivalent to simply performing two consecutive RM2 moves. Thus, at this stage in the description of weft-knitting, the motif resulting from the construction of a slip loop locally (shown at the bottom of Figure~\ref{fig:rm}(c)) is topologically trivial. In order to arrive at a non-trivial textile link starting from this topologically trivial motif, we use the technique of \emph{band surgery}. To describe the transition from a topological trivial configuration of strands of yarn to a textile link, we need to introduce the concepts of \emph{unknits} and \emph{band surgeries}. Recall that an unknot in $S^3$ can be defined as a knot that bounds an embedded disk. This definition can be extended to knots in a $3$-manifold $M$: we say $U \subset M$ is an unknot if it is the boundary of a disk $D$ such that $D$ lives entirely in a subset of $M$ homeomorphic to a $3$-ball. Similarly, an $n$-component unlink in $M$ is a disjoint collection of $n$ disjoint, simple closed curves which bound $n$ disjoint, embedded disks, all of which can be isotoped to live entirely inside a subset of $M$ homeomorphic to a $3$-ball.

\begin{definition}[Unknit]
\label{def:unknit}
An unknit of $n$ components $\mathcal{U}_n$ is an ordered $n$-component unlink in the thickened annulus $ A \times I$. 
\end{definition}
\noindent Each of the component of $\mathcal{U}_n$ represents a loop of yarn on the knitting needle. A planar diagram of the unknit is shown in Figure~\ref{fig:unknit}a. From now on through out the paper, whenever we refer to an unknit drawn as a planar diagram, we specifically mean link diagrams of the kind shown in FIG~\ref{fig:unknit}a where the annulus (or later torus) is in the plane of the page and the thickening direction is perpendicular to it.

\begin{figure}[h!]
\centering
\subfloat[An $n$-component unknit.]{
\includegraphics[width = 60mm]{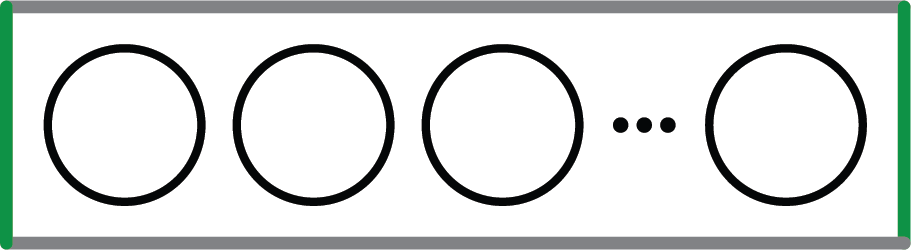}
} \hspace{2mm}
 \subfloat[A trivial row is the union of an $m$-component unlink and a loop in the annulus direction, representing the working yarn that is not yet on the needle.]{
 \includegraphics[width = 60mm]{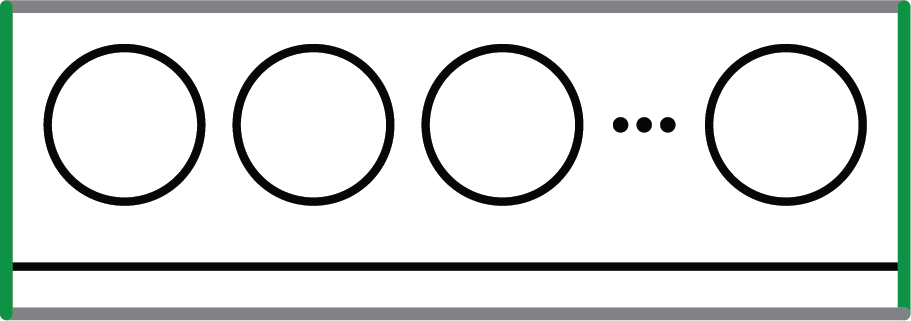}
 }
\caption{\label{fig:unknit} } 
\end{figure}

\begin{definition}[Row]
\label{def:row}
A trivial row of $n$ components $\mathcal{L}_n$ is an $n+1$-component link in the thickened annulus $ A  \times I$, which is composed of $n$ component unlink with an ordering $(1,\cdots,n)$ and an essential curve generating $H_1(A \times I, \mathbb{Z})$.
\end{definition}
\noindent A planar diagram of the trivial row is shown in Figure~\ref{fig:unknit}b.

The process of knitting uses working yarn -- represented by a longitudinal loop on the annulus -- to pull bights through the loops on the needle and pass those loops onto the other needle. We can think of each of the loops on the needles as if they are temporarily surrounding punctures $D^2 \times I$ in the thickened annulus $A \times I$. When taking a stitch, the loop of yarn on the needle interacts with a bight of yarn from the working yarn. It is dropped from the needle, thus removing the temporary puncture, and a bight from the working yarn is then transferred to the needle. Instead of working with the idea of dropping bights and picking new ones up using the needles as punctures in the manifold, we will consider the yarn on the needles as fixed and use \emph{band surgery} to connect the working yarn to the stitches on the new needle. The ordering on the contractible loops in the unknit $\mathcal{U}_n$ or the row $\mathcal{L}_n$ is given by the order in which they appear on the needle.

\begin{definition}[Band surgery, \cite{tristram_1969}]
\label{def:bs}
Let $L$ be a link in a 3-manifold $M$, which can be $S^3$, $\mathbb{R}^3$ or $\Sigma\times I$, where $\Sigma$ is an oriented closed 2-manifold. And let $b:[0,1]\times[0,1]\rightarrow M$ be an embedding, referred to as a band. The band $b\subset M$ is said to be compatible with the link $L\subset M$ if $b([0,1]\times[0,1])\cap L = b([0,1]\times\{0\})\cup b([0,1]\times\{1\})$ meaning, two non-adjacent boundary components of the band $b$ overlap with the link $L$ in two disjoint arcs. In this case the link $(L-b([0,1]\times\{0\})\cup b([0,1]\times\{1\}))\cup b(\{0\}\times[0,1])\cup b(\{1\}\times[0,1])$ will be denoted by $bL$, and it is the link obtained by the band surgery of link $L\subset M$ with respect to band $b\subset M$.
\end{definition}

\begin{figure}[h!]
{\includegraphics[width = 84mm]{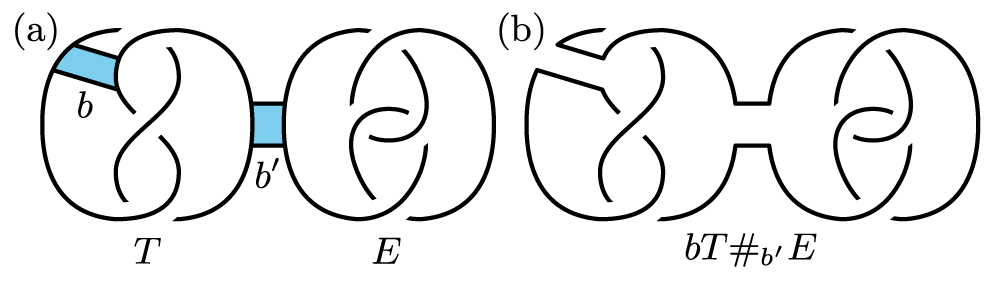}}
\caption{(a) The configuration of knots and bands before band surgery. $T$, $E$ denote the right handed trefoil knot and the figure eight knot respectively. (b) The 2-component link after the band surgeries is depicted.
}
\label{fig:connect_sum}
\end{figure}

\noindent A band surgery involving a single band between a pair of knots in $\mathbb{R}^3$ separated by a disjoint plane is called the \emph{connected sum} of those two knots. If the band surgery labeled by $b$ in Figure ~\ref{fig:connect_sum} were not done, the diagram would illustrate the connected sum operation between the right-handed trefoil knot and the figure eight knot by doing surgery on the remaining band $b^{\prime}$.

We now define knitting mathematically.

\begin{definition}[A knit of $q$ rows]\label{def:knitting}As knitting is an inductive process, we will define a knit by induction.
\begin{enumerate}
    \item (For $q=1$): Let $A$ be an annulus $I\times I /((0,t)\sim(1,t))$. The horizontal direction is the the direction we knit in (also known as the course direction or the weft direction) and the vertical direction with coordinate $t$ assigned to it is the direction that new rows are added (also called the wale direction or the warp direction.) The annulus $A$ is thickened to allow for strands to cross.

Let $L_0$ be an unknit of $n$ components $\mathcal{U}_n$ in the image of $(I \times [0, 1/4)) \times I$ and $L_1$ be a trivial row which is the union of an $m$ component unlink $\mathcal{U}_{1_m}$ and an essential curve $\ell_1$ living in $A \times \{0\}$ which generates $H_1(A \times I)$ and is in the image of $(I \times (1/4, 3/4)) \times I$. Furthermore, $\mathcal{U}_{1_m}$ is in the image of $(I \times (3/4,1])$ and is arranged horizontally according to the ordering of the components of the unlink. See Figure~\ref{fig:knit_row}a. 

We will define knitting diagrammatically in the thickened rectangle avoiding the edges (and then projecting down to the thickened annulus) by two operations: 
\begin{enumerate}
    \item A finite number of Reidemeister moves and planar isotopies can act on the longitude $\ell_1$ in the image of the image of $(I \times [0,3/4))\times I$. These isotopies can let $\ell_1$ interact with itself and with the components of $L_0$, but not with the contractible components of $L_1$. 
    \item Each of the $m$ components of the unlink of $L_1$ are joined via band surgery to $\ell_1$ (Figure~\ref{fig:knit_row}c).  
The original $n$ components of $L_0$ remain and the $m+1$ components of $L_1$ have now been joined into a single essential curve generating $H_1(A \times I)$. The result of this process is called the first row of our knit $\mathcal{R}_1$.
\end{enumerate}
\item (For $q=2$): To add a second row of knitting, we shall begin with a row $\mathcal{R}_1$ in a thickened annulus $A \times I$, defined in part (1). To this, place a trivial row $L_2$ above $\mathcal{R}_1$ consisting of $k$ contractible components and a longitude $\ell_2$, as shown in Figure~\ref{fig:knit_row}e. Knitting this row consists of nearly the same operations as before. 

\begin{enumerate}
    \item A finite number of Reidemeister moves and planar isotopies act on $\ell_2$. These can be between $\ell_2$ and itself or any component of $R_1,$ but not with the $k$ contractible components of $L_2$.
    \item Each of the $k$ contractible components of $L_2$ are joined via band surgery to $\ell_2$. The resulting link $\mathcal{R}_2$ has $n+2$ components.
\end{enumerate}
\item(For $q>2$): The procedure to add new rows to the knit is a generalization of definition~\ref{def:knitting}(2). Each new row consists of adding a trivial row $L_{r+1}$ of $n_{r+1}$ trivial components and one longitude $\ell_{r+1}$ to the annulus ``above'' the link $\mathcal{R}_r$. The longitude $\ell_{r+1}$ can be isotoped in $A\times I$ such that it interacts with itself and any of the components of $\mathcal{R}_r$. Each of the contractible loops of $L_{r+1}$ are joined by band surgery to $\ell_{r+1}.$
\end{enumerate}
\end{definition}

\begin{figure}[t!]
\subfloat[]{
\includegraphics[width = 60mm]{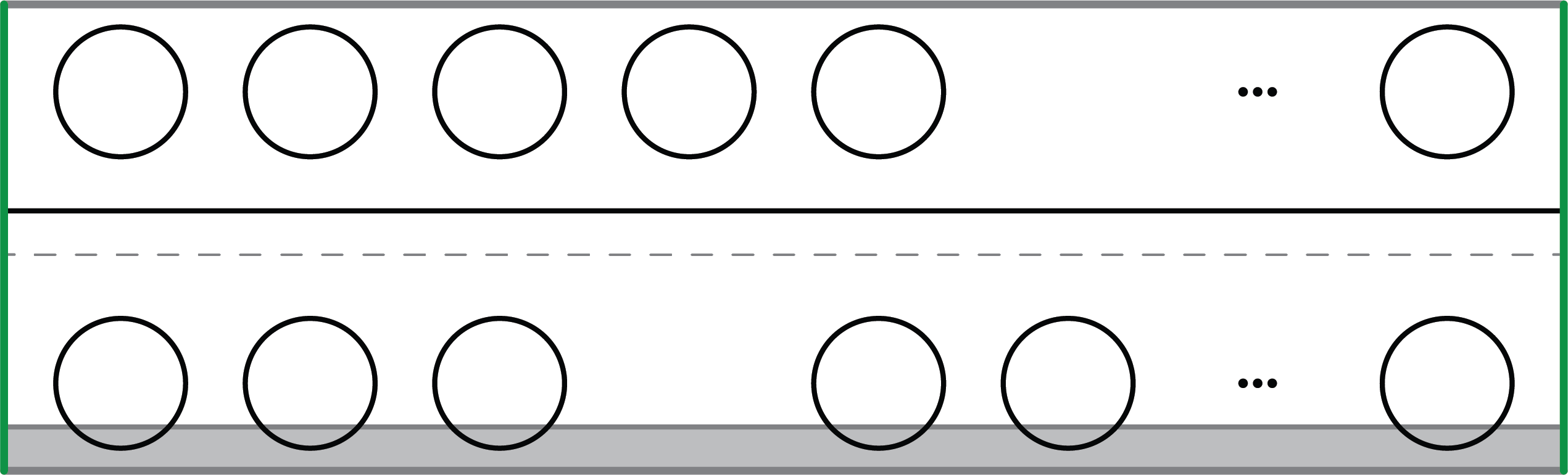}
}
\subfloat[]{
\includegraphics[width = 60mm]{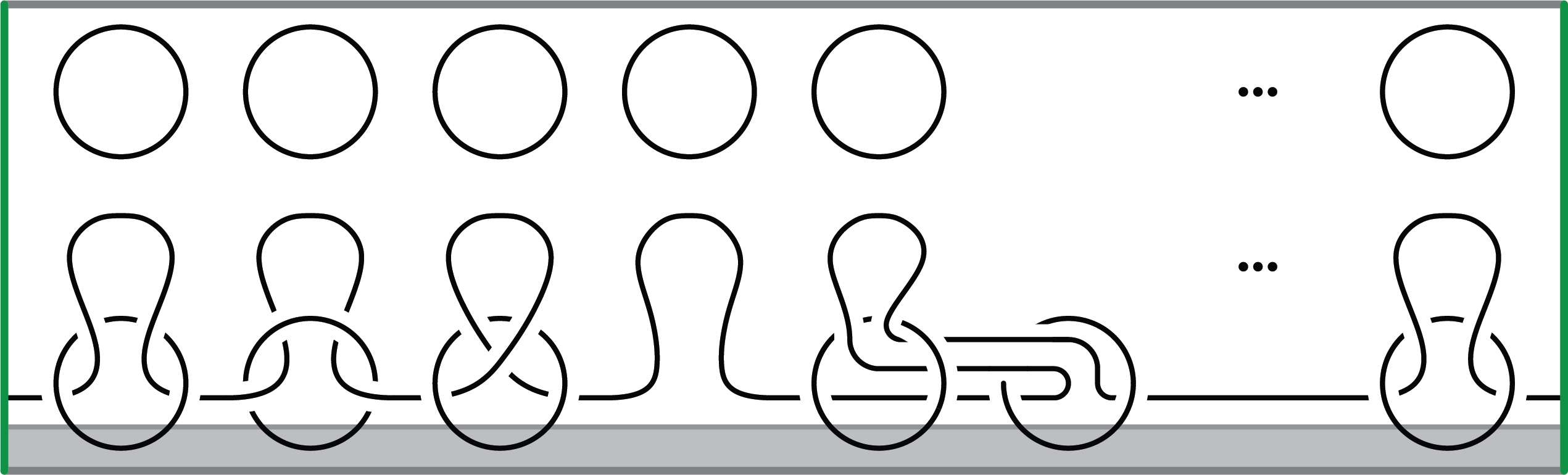}
}

\subfloat[]{
\includegraphics[width = 60mm]{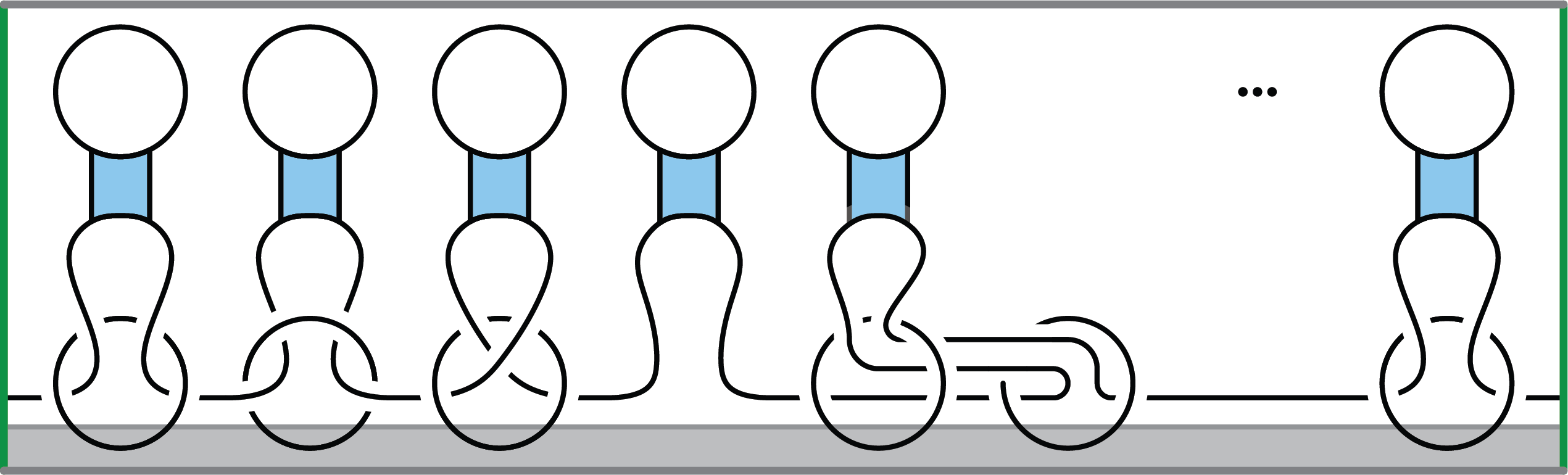}
}
\subfloat[]{
\includegraphics[width = 60mm]{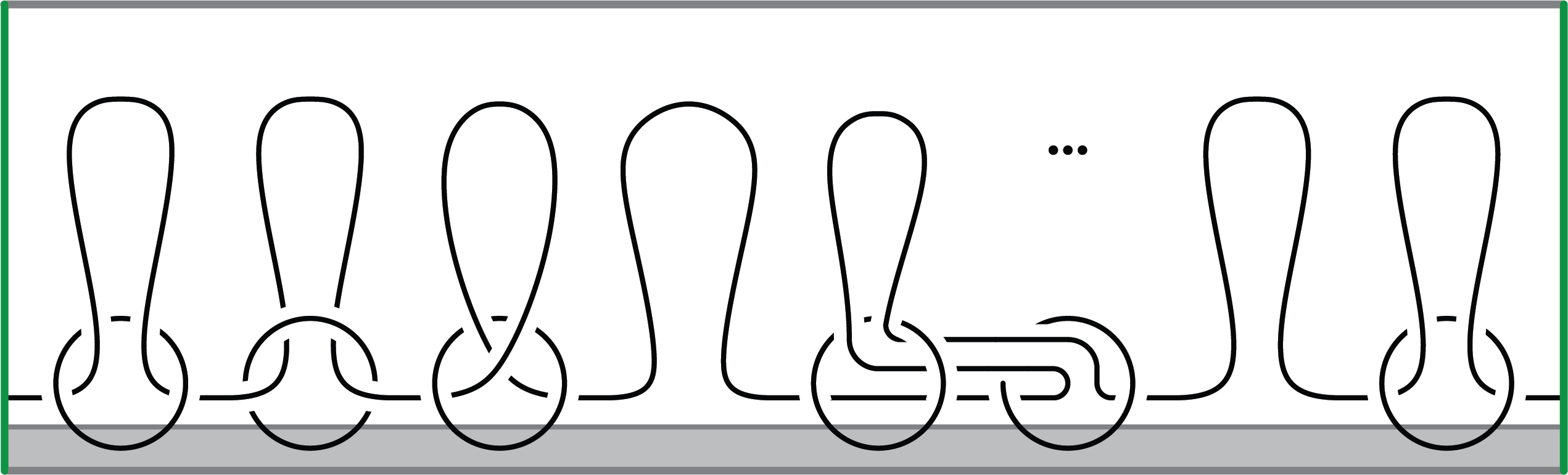}
}

\subfloat[]{
\includegraphics[width = 60mm]{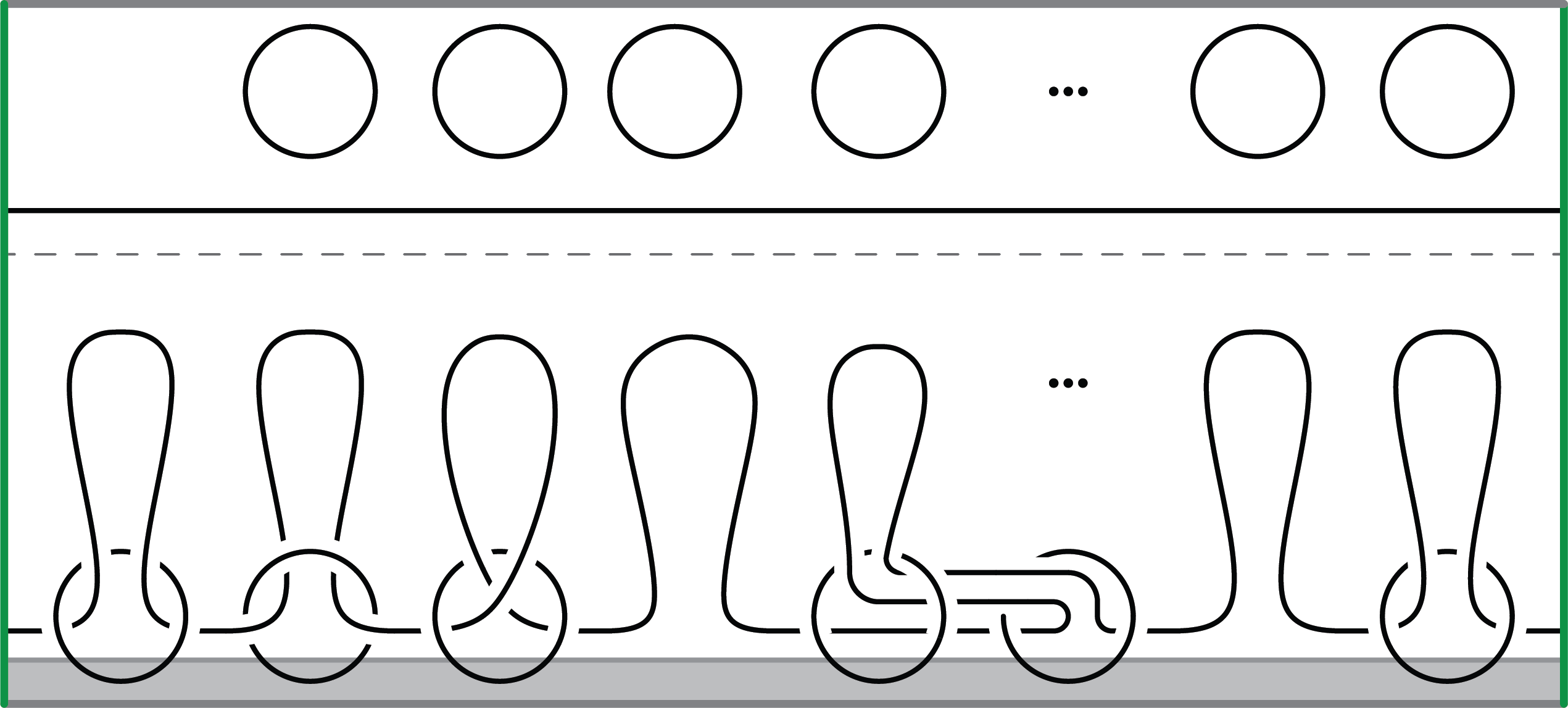}
}
\subfloat[]{
\includegraphics[width = 60mm]{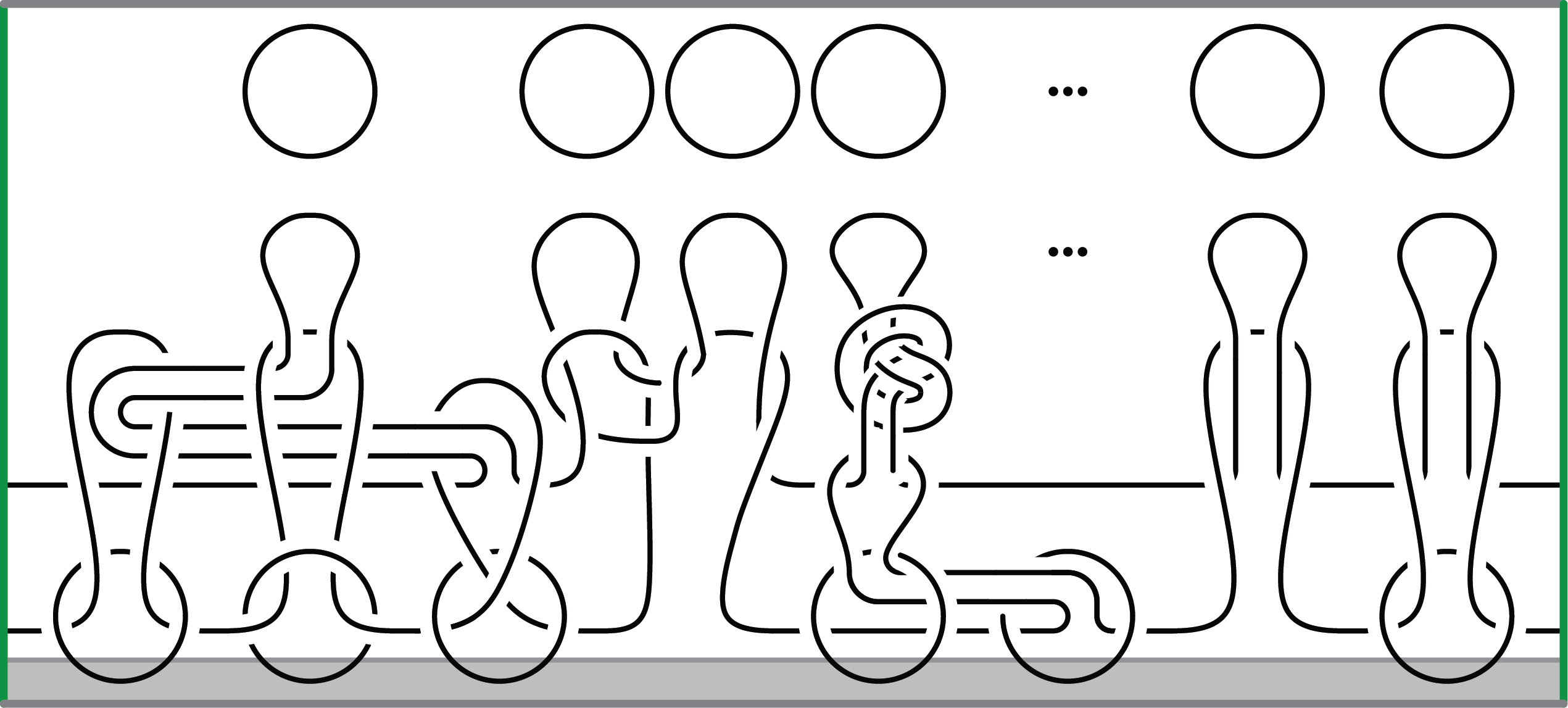}
}

\subfloat[]{
\includegraphics[width = 60mm]{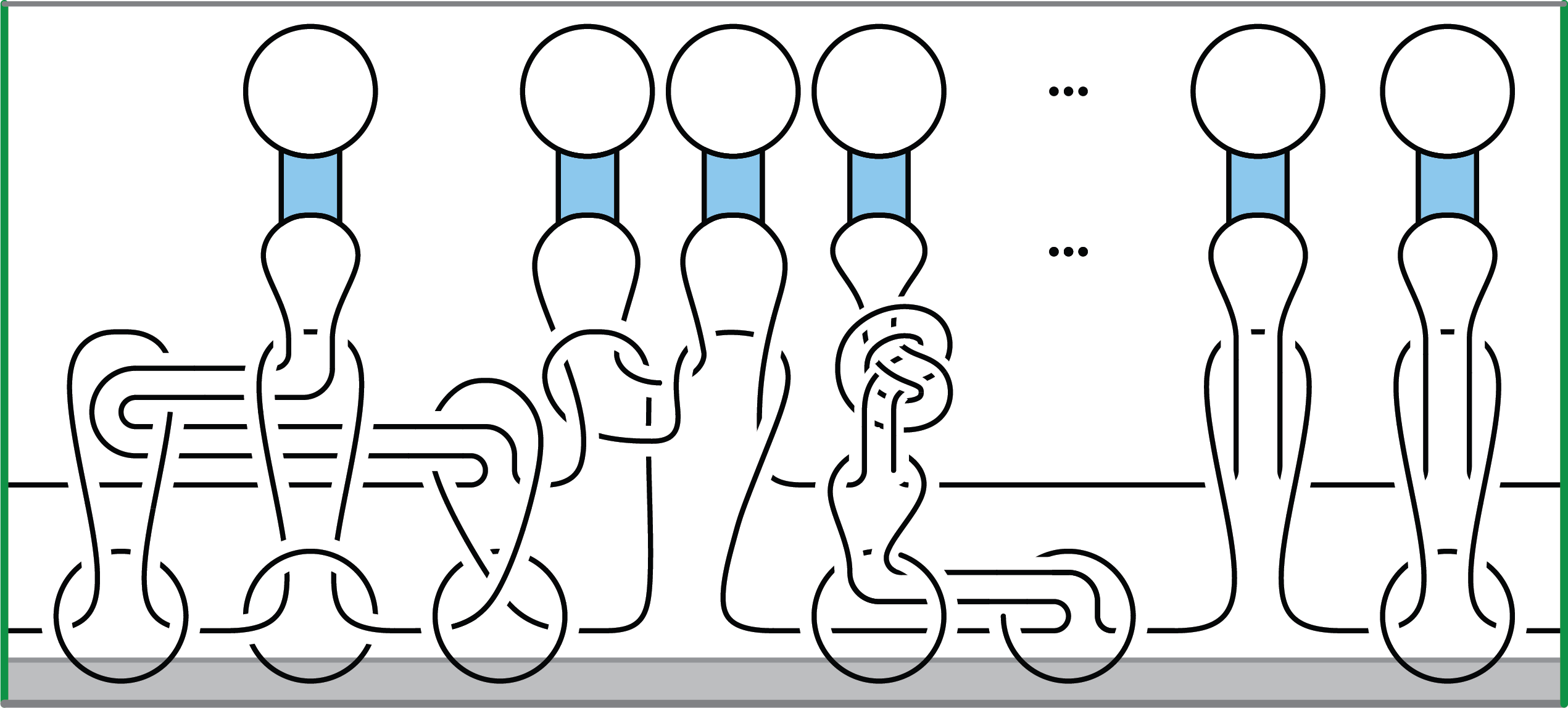}
}
\subfloat[]{
\includegraphics[width = 60mm]{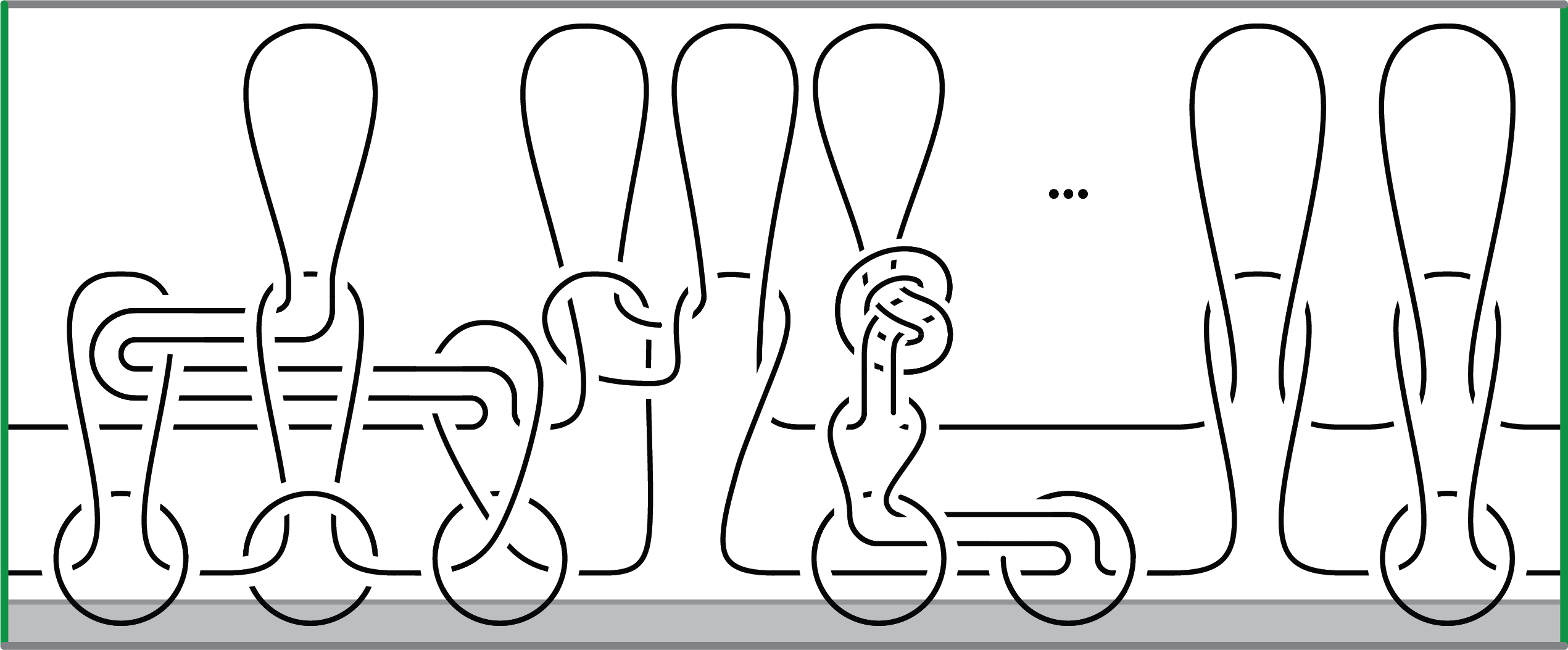}
}

\caption{
}
\label{fig:knit_row}
\end{figure}

Therefore, each row with $m$ stitches that we have ``knit'' consists of taking a longitude (the working yarn) and joining it by a band to each loop on the needle (represented by the components of an unlink).

Next we define the concept of a swatch by formalizing different stages involved in the construction of a two-periodic knitted textile. Note that a knit lives inside a single thickened annulus, and our goal is to study textiles using links inside a thickened torus. Swatches emulate the process of knitting captured by the definition above, but live in the desired ambient space.

A schematic of our method to describe textile links, in terms of link diagrams is illustrated in Figure~\ref{fig:swatch_construction}(a)-(d).

  \begin{definition}[Swatch \cite{Markande_2020}]\label{def:swatch}
Let $A$ be an annulus contained in $T^2\times\{0\}$. Let $\mathcal{R}_{k-1}$ be a knit of $k-1$ rows contained in $A \times I$ which started with an unlink $U_{n}$ of $n$ components. To this construction we add a final knitted row $L_k$ with $n$ ordered contractible components and one longitude $\ell_k$. Each of the $n$ contractible components of $\mathcal{U}_n$ are arranged along the lower boundary of the annulus according to their order and the $n$ contractible components of $L_k$ are arranged according to their order along the top of the annulus. This is shown in Figure ~\ref{fig:swatch_construction}(a), where we see the grey boundary of the annulus sitting inside the torus obtained by identifying opposite sides of the rectangle. Following Definition~\ref{def:knitting}(3), $\ell_k$ can be isotoped in the thickened annulus and then connected to the contractible components of $L_k$ via band surgery retaining the ordering of contractible components, as shown in Figure~\ref{fig:swatch_construction}(b). The boundaries of the thickened annulus are identified, with no monodromy along the annulus direction. 

The final step, shown in Figure~\ref{fig:swatch_construction}(c) and Figure~\ref{fig:swatch_construction}(d), is creating a link in $T^2 \times I$ via band surgery. During this step, every contractible loop $U_{n_i}$ from $U_n$ with ordering $i$ gets connected to each contractible loop $L_{k_i}$ from $L_k$ with ordering $i$, where the ordering on each set of loops is identified, as shown in FIG~\ref{fig:swatch_construction}(c). 
As the whole point of this procedure is to build a mathematical object that models the act of knitting, this final step should replicate a fixing procedure on the fabric motif rather than introducing new structure to the motif. Therefore, we require these final bands be in \textbf{knitting position}.

We say that a finite collection of bands $\{b_i:[0,1]\times[0,1]\rightarrow T^2\times I\}_{i=1}^n$ is in the knitting position if the following hold: 
\begin{enumerate}
\item The bands are mutually disjoint, unknotted and simple meaning that there are no crossings involving the bands and the components of the unknit when drawn as a planar diagram.
\item All the bands cut through the lower boundary of the 3-manifold $A\times I$ i.e., if $A$ is given by $[0,1]\times S^1$, then the bands intersect (pass through) the surface $(\{0\}\times S^1)\times I$ exactly once.  
\item All the bands are compatible with the components of the unknit $L\subset T^2\times I$ such that every band connects a contractible component to a longitudinal component of the unknit.
\end{enumerate} 

 Once we have found these knitting position bands, such as the cyan bands shown in Figure~\ref{fig:swatch_construction}(c). Then, we can perform band surgeries with respect to these bands $\{b_i\}_{i=1}^n$, we obtain an $k$-component link given by $b_n(\cdots(b_2(b_1L)))\subset T^2\times I$. The order in which the band surgeries are done is immaterial, and the final link is called an $n\times k$ swatch. For example, the link shown in Figure~\ref{fig:swatch_construction}(d) is a swatch.  
\end{definition}

\begin{figure}[h!]
\subfloat[]{
\includegraphics[width = 60mm]{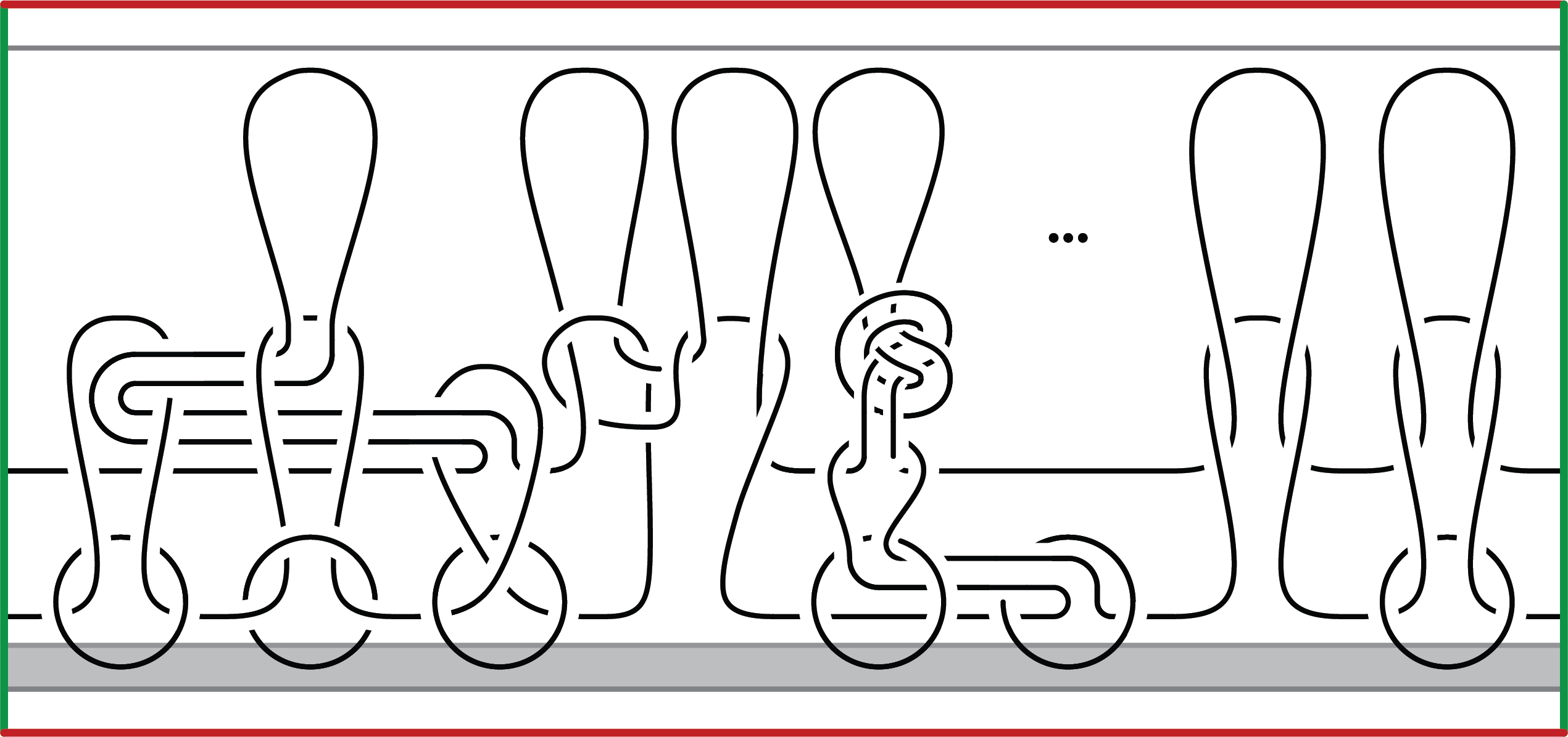}
}
\subfloat[]{
\includegraphics[width = 60mm]{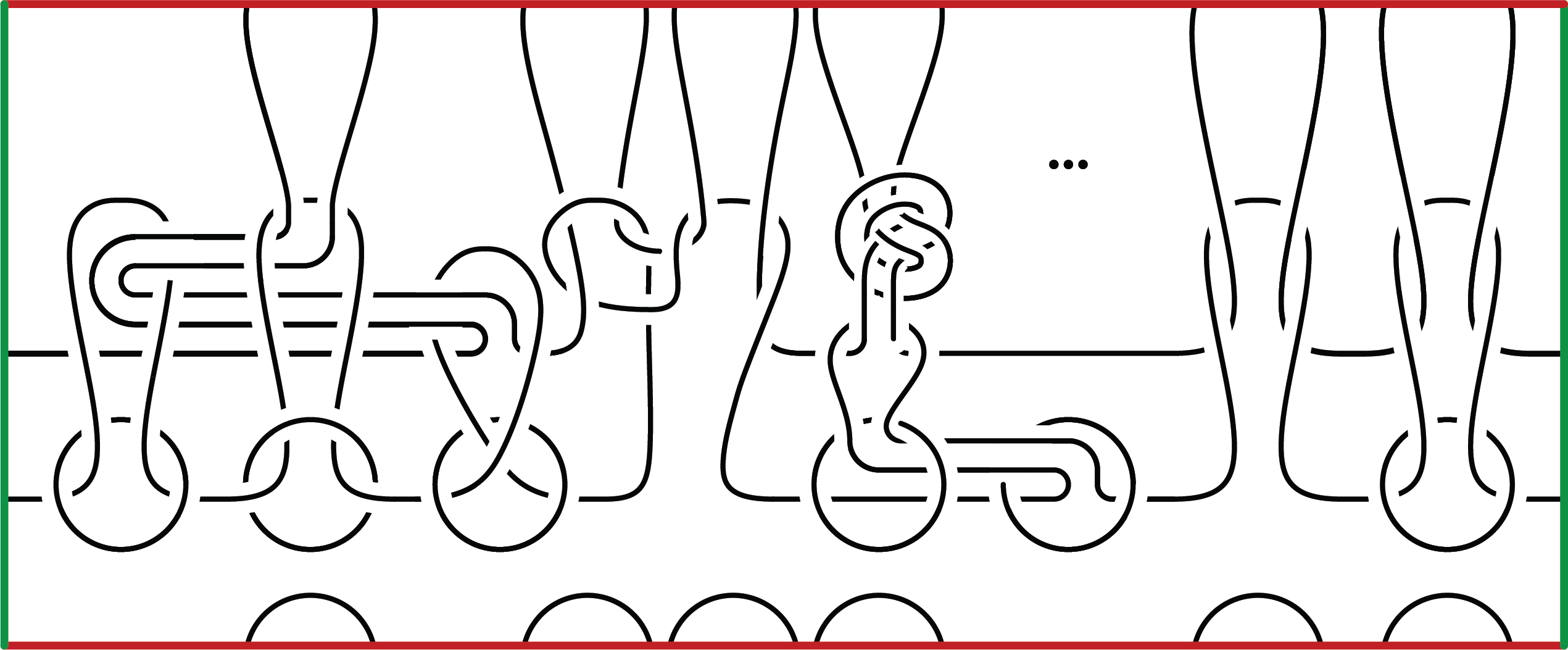}
}

\subfloat[]{
\includegraphics[width = 60mm]{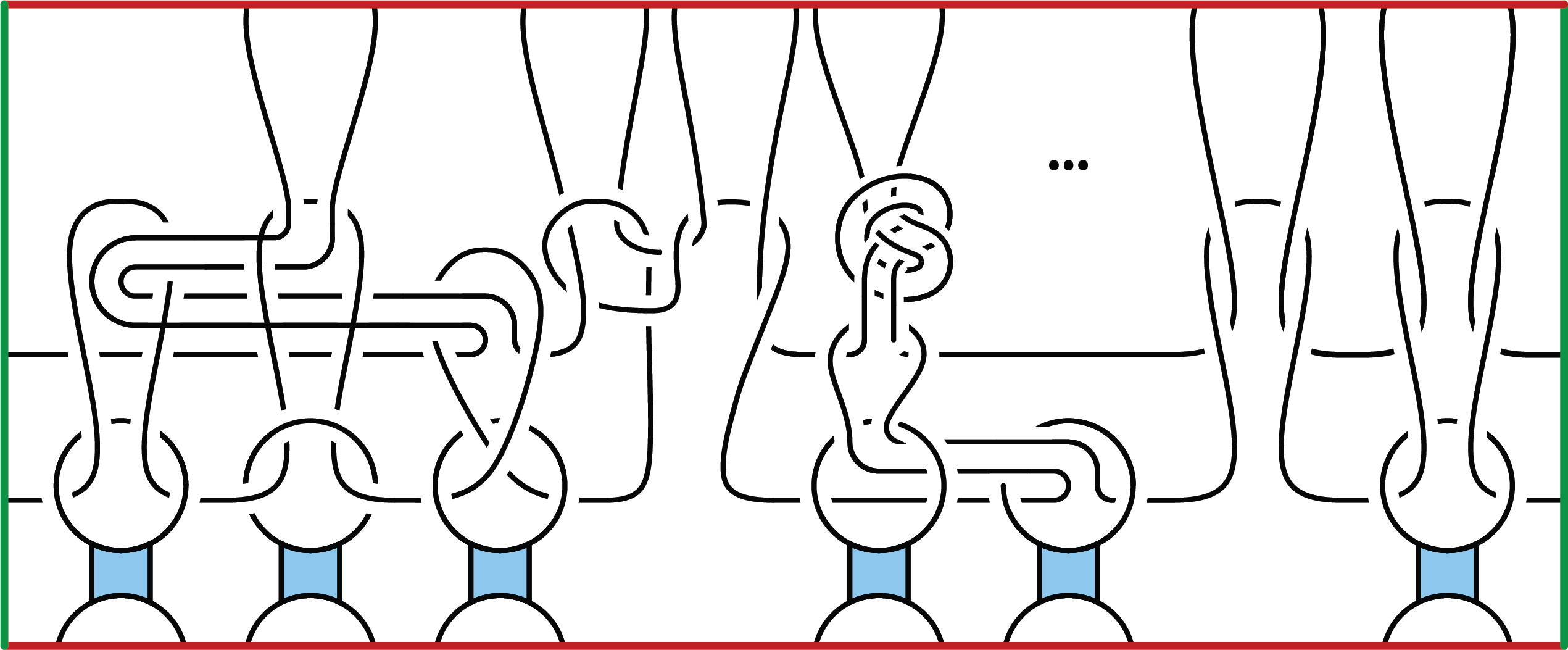}
}
\subfloat[]{
\includegraphics[width = 60mm]{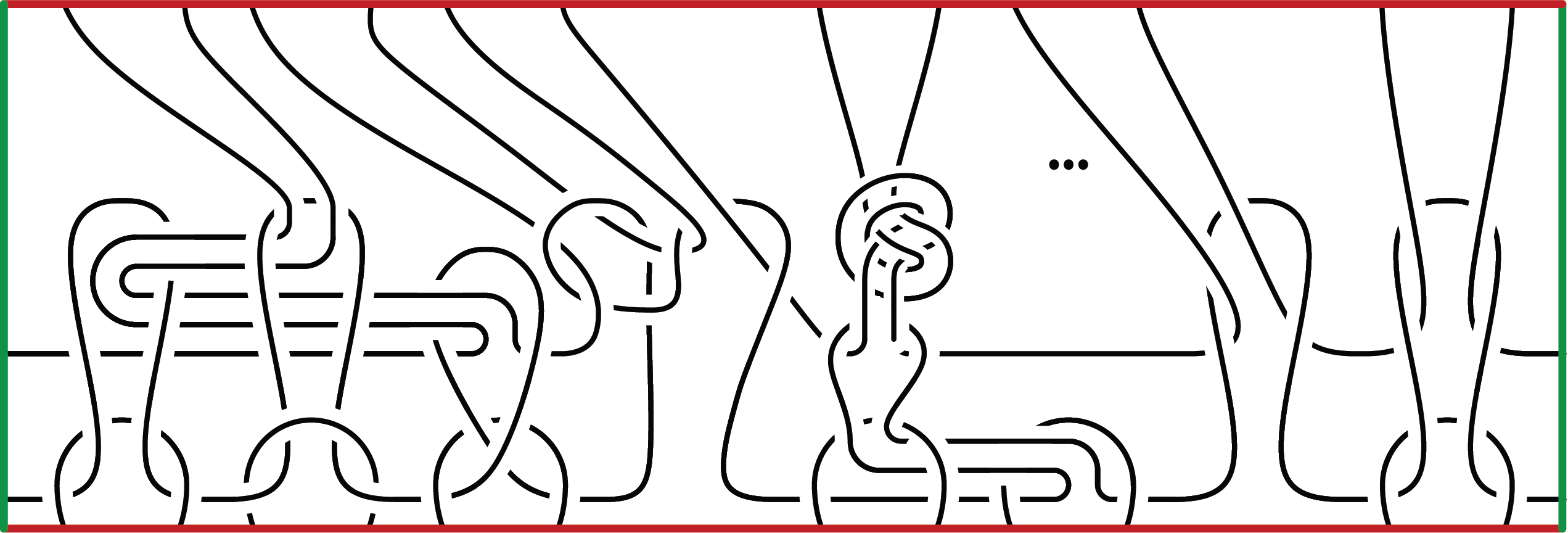}
}

\caption{
}
\label{fig:swatch_construction}
\end{figure}

A key aspect of the definition of an $n\times k$ swatch and its construction is that the homology classes of the components of a swatch are identical to the homology classes of the non-contractible loops in the unknit that it is constructed from. Thus, each component of a swatch is homologous to the longitude of the base torus of $T^2\times I$. This means, despite of having infinitely many choices for fundamental tiling units, our choice of unknit (due to the homology classes of its components) dictates that the basis vectors are along the course and wale directions. 

In other words, if we were to choose to use a different quotient map to construct tiles, the links inside may be topologically different from their course-wale basis counterparts depending on the slopes of our chosen basis. As a result, we could accordingly modify the definition of an $n\times k$ unknit and an $n\times k$ swatch by starting with trivial links whose non-contractible components belong to the homology classes of the components of the links in the modified tiles of our choice.

\begin{remark}[Trivial $n$-component swatch]
In the definition of a swatch, if no isotopies are performed on the unknit before doing the knitting position band surgeries, then the only possibility is attaching all the null-homotopic components to the uppermost longitudinal component. Then the $n$-component unknit yields the $n$-component trivial swatch.
\end{remark}

\noindent The construction of textile links corresponding to stockinette and reverse stockinette fabrics starting from  $1\times1$ swatches is illustrated in Figure~\ref{fig:knit_purl}.
\begin{figure}[h!]
\centering
\includegraphics[width=174mm]{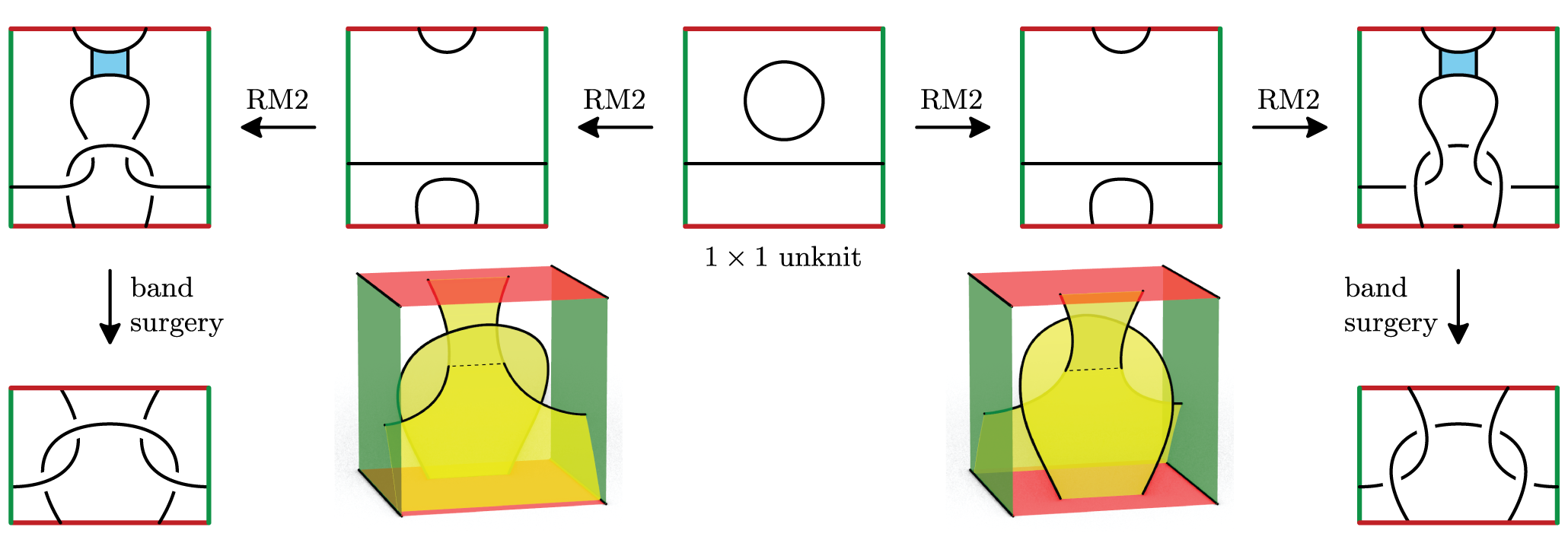}
\caption{\label{fig:knit_purl}(a) A purl swatch and its ribbon surface. (b) A knit swatch and its ribbon surface. The auxiliary unknots and the ribbon singularities of the ribbon surfaces are shown as black and red dashed arcs respectively.} 
\end{figure}

\begin{remark}\label{remark:textile_link_to_swatch}
Given a two-periodic stitch pattern associated with any weft-knitted textile, the corresponding textile link can be constructed as a non-trivial $m\times n$ swatch for some $m,n\in\{1,2,\cdots\}$. 
\end{remark}
\begin{proof} This is clear from the physical construction of a textile link (as the quotient of a doubly-periodic motif of a stitch pattern) and the mathematical construction of a swatch, the swatch simply organizes components of a textile link into distinct rows of knitting.   
\end{proof}
\noindent However, the question of whether an arbitrary motif (which is not necessarily doubly periodic) of a weft-knitted textile can be knitted into a non-trivial swatch is not well posed unless all the permissible mechanical moves are specified.

\begin{remark}\label{remark:reflected_swatch}
The motifs described by a swatch and the swatch obtained by rotating it about $y$-axis (in $\mathbb{R}^3$) generate two sides of the same textile.
\end{remark}
\noindent For example, the knit and the purl generate two sides of the stockinette fabric. Nevertheless, it is necessary to distinguish between such pairs of swatches because, $1\times1$ rib, garter, seed and stockinette fabric, all give rise to topologically distinct motifs that are not related by a rotation in $\mathbb{R}^3$.

The description and the method described above for generating motifs that can be realized through weft-knitting excludes textile links consisting of essential loops that are not homologous to the longitude of the base torus. However, depending on the choice of basis, the translational units that tile the stitch pattern can give rise to links with essential components which are not homologous to a longitude. In spite of this subtlety, as long as we fix our choice of basis, which is dictated by the course and wale directions, we are able to define links whose motifs can be knit and formulate the concept of swatches consistently.

\subsection{Two-periodic weft-knitted fabrics and ribbon links}
This work has so far centered on modeling motifs in knitted fabric using link in $3$-manifolds other than $S^3$, however, every textile link (or swatch) can be turned into a link in $S^3$ in a well-defined way using the Dehn filling procedure defined in previous sections. Therefore, this section will examine the image of the set of textile links and prove that techniques developed for the study of slice and ribbon links (defined in the introduction) are useful in this setting. We will refer to the image of a textile link $L \subset T^2 \times I$ under this Dehn filling as $f(L)$ and conversely refer to $L$ as the \emph{precursor link} to the link $f(L) \subset S^3$.

We will use the formulation of ribbon disks as immersed disks in $S^3$ or $\mathbb{R}^3$ whose singularities exist in pairs known as \emph{ribbon singularities} or \emph{ribbon intersections} (surveyed nicely in \cite{eisermann2009}). We can further think of general immersed surfaces (perhaps with genus) with only ribbon singularities and call them ribbon surfaces. For example, next to the $1\times 1$ swatches in Figure~\ref{fig:knit_purl}, a 3D rendering of a pair of immersed surfaces with knit and purl swatches as one of their boundary components is shown. Notice that the geometry of the self-intersection of these immersed surfaces is identical to that of a slit (dashed arc) cut open by a ribbon (yellow annulus) passing through the interior of itself. These intersections are particularly special as if we view the aforementioned $S^3$ or $\mathbb{R}^3$ as the boundary of $B^4$ or a component of the boundary of $\mathbb{R}^4$, respectively, we can push the interior of a ribbon surface into this 4-dimensional space in order to obtain an embedded surface in $4$-dimensions. 

We will follow the work of \cite{eisermann2009} and represent ribbon surfaces with band diagrams as below. Note that not all links are ribbon; for example, any link with nonzero linking number (defined in section 3) cannot be ribbon \cite{Rolfsen_1976}, thus the Hopf link is not ribbon.

\begin{definition}[Band diagrams and ribbon surfaces, \cite{eisermann2009}] 
\label{bds}
A planar diagram of a ribbon link $L$ in $S^3$ that is composed of only the local motifs shown in Figure~\ref{fig:band_diagrams}(a), is called a band diagram of $L\subset S^3$. As a result, there exists a surface $S_L\subset S^3$ whose boundary is given by the ribbon link $L$, $\partial S_L \cong L$. The surface $S_L\subset S^3$ is called a ribbon surface.
\end{definition}
\noindent The ribbon surfaces need not be orientable. However, for a \emph{ribbon knot}, as we define below, the ribbon surface must be a disk, and thus, a ribbon knot bounds an orientable ribbon surface. %

 \begin{figure}[h!]
\centering
\subfloat[A ribbon knot.]{
\includegraphics[width = 60 mm]{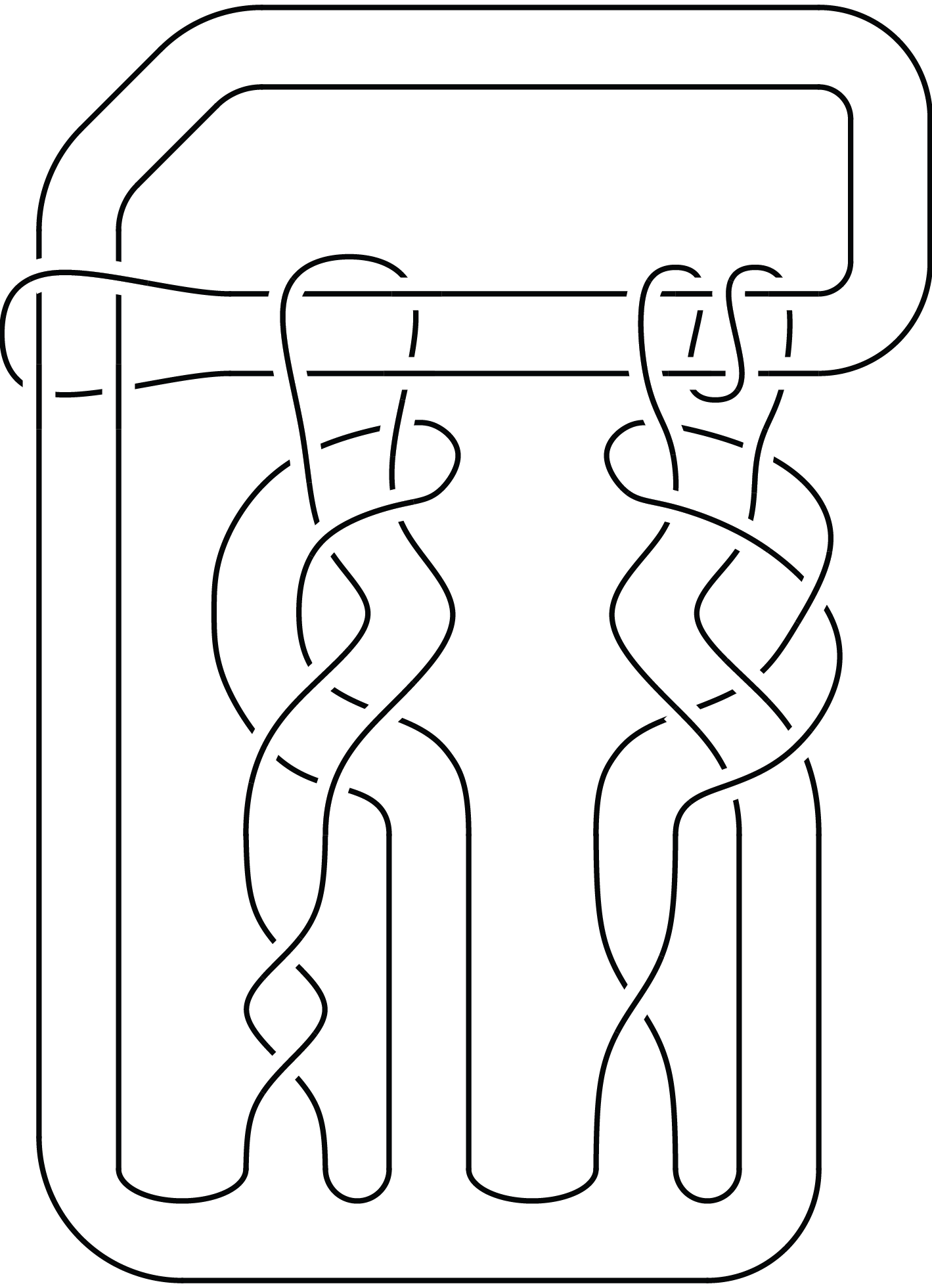}
}
\hspace{2mm}
\subfloat[A two-component ribbon link.]{
\includegraphics[width = 41 mm]{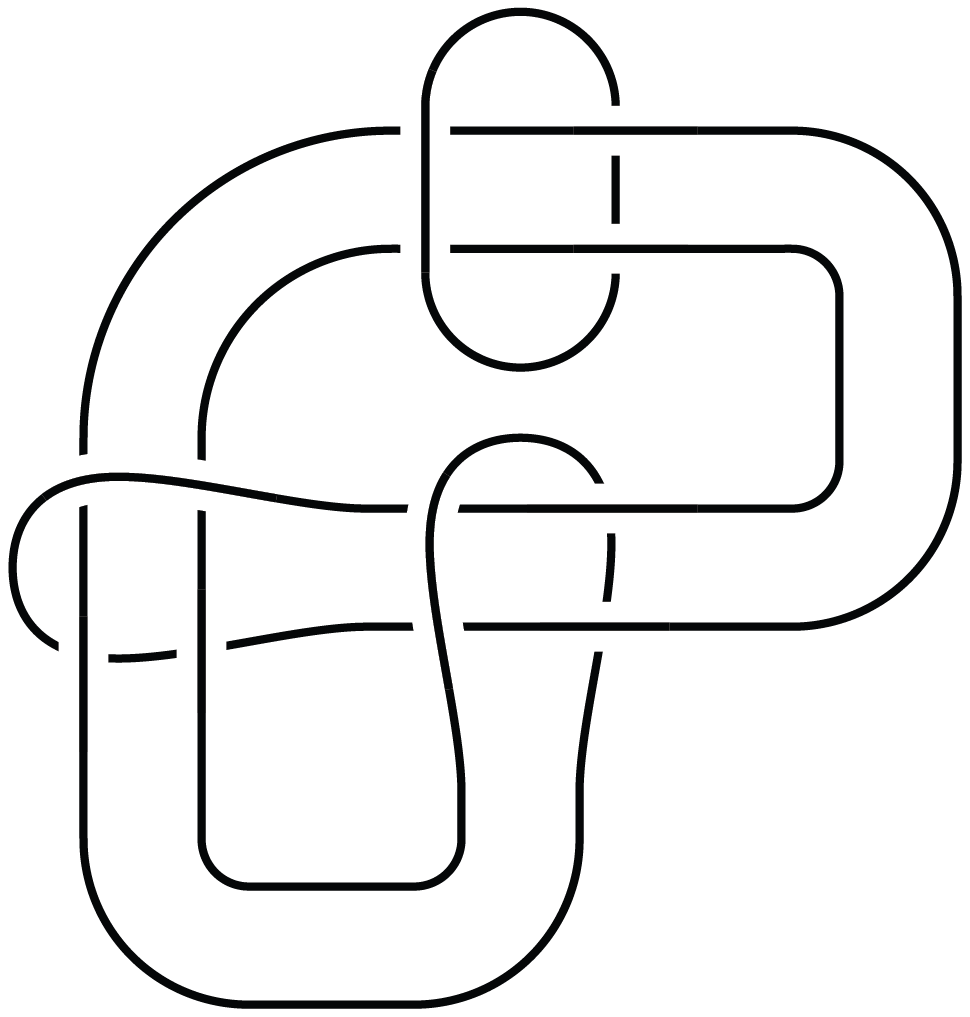}
}
\caption{\label{fig:band_diagrams}} 
\end{figure}

\begin{remark}[Ribbon knots, links, and band diagrams]
If a knot in $S^3$ admits a band diagram such that the corresponding ribbon surface is homeomorphic to a disk immersed in $S^3$, then the knot is a ribbon knot and the disk is a ribbon disk. 

Similarly, given an $n$-component link $L\subset S^3$, if all of its components are ribbon knots and the only intersections of the corresponding collection of $n$ ribbon disks are ribbon singularities, then the link $L$ is a ribbon link. 
\end{remark}

\noindent Band diagrams of a ribbon knot and a ribbon link in $\mathbb{R}^3$ are shown in Figure~\ref{fig:band_diagrams}(a) and Figure~\ref{fig:band_diagrams}(b) respectively. A ribbon singularity occurring between a pair of components of a ribbon surface is called a \emph{mixed ribbon singularity}, otherwise it is called a \emph{pure ribbon singularity} \cite{eisermann2009}.

\begin{figure}[h!]
\centering
\includegraphics[width = 41 mm]{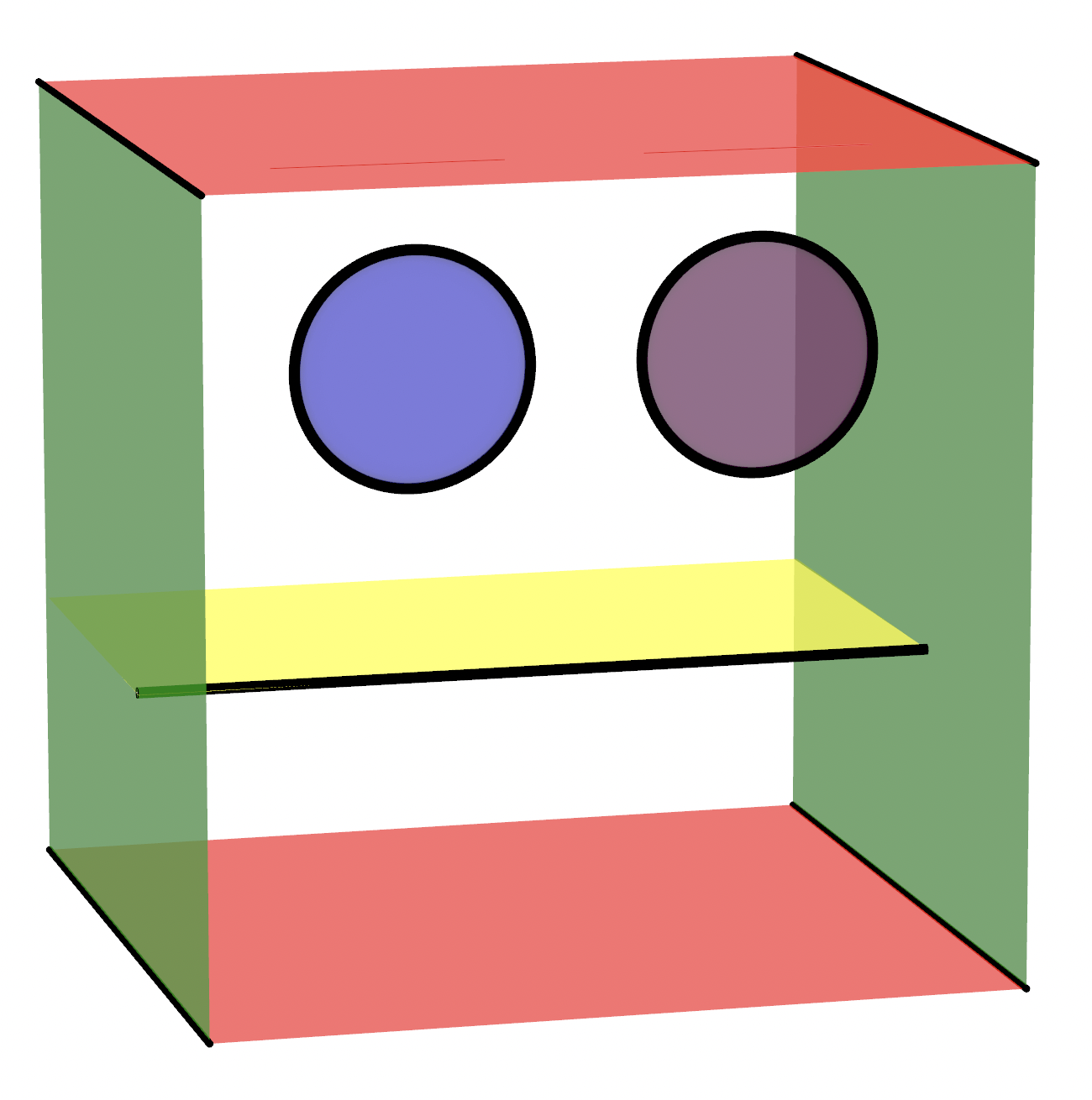}
\caption{\label{fig:ribbon_4D} Precursor ribbon in $T^2\times I$.} 
\end{figure}

\noindent We can now state and prove a necessary property of links in $T^2\times I$ that can be knit into motifs of two-periodic knitted textiles.
\begin{theorem}[Two-periodic weft-knitted textiles and ribbon links]
\label{theorem:ribbon}
Let $S$ be a two-periodic weft-knitted textile, and let $L^{S}\subset T^2\times I$ be the corresponding $n$-component textile link. Then the link $L\subset T^2\times I$ is a precursor to an $n$-component ribbon link $f(L^{S})\subset S^3$.   
\end{theorem}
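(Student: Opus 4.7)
The plan is to reduce the statement to the case where $L^{S}$ is a swatch, then to construct an explicit ribbon surface in $S^3$ bounded by $f(L^{S})$ by tracking how the surface evolves through the steps of the swatch construction. By Remark \ref{remark:textile_link_to_swatch}, any textile link coming from a two-periodic weft-knitted fabric can be realized as an $n\times k$ swatch, so I may assume $L^{S}$ is obtained from an unknit $\mathcal{U}\subset T^2\times I$ (together with inductively added trivial rows) by a finite sequence of ambient isotopies and band surgeries in knitting position, as prescribed by Definitions \ref{def:knitting} and \ref{def:swatch}.

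First I would analyze the image under $f$ of the initial unlink data. The null-homotopic components of $\mathcal{U}$ and of each trivial row bound small pairwise disjoint embedded disks contained in a thin neighborhood of the annulus $A\subset T^{2}\times\{0\}$; after applying $f$ these sit inside a $3$-ball in $S^3$ disjoint from the Hopf link $H$. Each longitude $\ell_i$ is essential in $T^{2}\times I$ but, after Dehn filling along the marked meridian and longitude on the boundary tori, becomes an unknot in $S^3$ bounding a meridian disk of one of the filling solid tori. By stacking these meridian disks at slightly different heights and choosing the null-homotopic disks sufficiently small, I arrange that $f$ applied to the entire initial configuration is an unlink in $S^3$ bounding a disjoint family of embedded disks $\mathcal{D}_0$.

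Next I would construct the ribbon surface inductively, processing the swatch's operations in order. Ambient isotopies of $T^{2}\times I$ extend to ambient isotopies of $S^3$ and carry the current surface along without affecting the ribbon property. At each band surgery I enlarge the surface by gluing in the band itself. A band surgery reduces the number of boundary components by one when applied between two distinct components, so the surface remains a disjoint union of immersed disks; after all surgeries the boundary is exactly the $n$-component link $f(L^{S})$. The remaining content is to verify that the enlarged surface is still a ribbon surface, i.e.\ all self-intersections are ribbon intersections in the sense of Definition \ref{bds}.

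The main obstacle is exactly this final verification, and it is where the knitting-position hypothesis is used. The three conditions on a knitting-position band — mutual disjointness and simplicity with no crossings against unknit components in the planar diagram, cutting the lower annulus boundary exactly once, and always joining a contractible component to a longitudinal one — are designed precisely so that when one draws the band together with the disks in $\mathcal{D}_{0}$ (pushed to be essentially parallel to their boundary unknots, with the meridian disks pushed into the Hopf-filled balls), any resulting arc of intersection between a band and a disk (or between two bands) has both endpoints in the interiors of the two intersecting sheets. Geometrically this is the four-dimensional realization of the "pulling a bight through a loop" move highlighted beneath Figure \ref{fig:knitting}: each such pull produces a slit interior intersection rather than a clasp. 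I expect this step to be carried out by a local case analysis of the two types of band surgeries appearing in a swatch (the within-row surgeries joining $\ell_i$ to a contractible component of $L_i$, and the final closing surgeries joining $U_{n_i}$ to $L_{k_i}$), showing in each case that the band can be isotoped so that its diagram is a band diagram for the growing surface, whence the result is automatically a ribbon surface and $f(L^{S})$ is an $n$-component ribbon link.
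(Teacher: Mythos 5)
Your proposal is correct and follows essentially the same route as the paper's proof: reduce to the swatch construction, observe that the Dehn-filled initial data (unknit plus trivial rows) is an unlink in $S^3$ bounding disjoint embedded disks, and conclude ribbonness because $f(L^{S})$ is obtained from that unlink by fusion band surgeries. You additionally flesh out the surface-level verification that the paper defers to as ``a standard fact in the study of knot concordance''; the one small caveat is that this verification rests on the general fact that any fusion band can be perturbed to meet the embedded disks only in ribbon arcs, not specifically on the knitting-position conditions (the within-row bands of Definition~\ref{def:knitting} are not required to be in knitting position, yet their surgeries must also be checked).
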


\begin{proof}[Proof of Theorem \ref{theorem:ribbon}] 
Given that the boundary homeomorphism is defined by curves of slopes $1/0$ and $0/1$ such that the corresponding Dehn filling of $T^2\times I$ yields $S^3$, an $n$-component link $L = K_1\cup...\cup K_n$ in $T^2\times I$ is a precursor to an $n$-component ribbon link $f(L)\subset S^3$ if the following hold: Miriam edit: why is part 2 true?
\begin{enumerate}
\item for every null-homotopic component $K_i\subset L$, there exists an embedded disk $D_i\subset T^2\times I$ such that $\partial D_i=K_i$.
\item for every  homotopically non-trivial component $K_j\subset L$, there exists an annulus $A_j\subset T^2\times I$ such that $\partial A_j = K_j\cup\alpha_j$, where $\alpha_i$ is a simple closed curve on $T^2\times\{0\}$ -- the orange boundary torus in Figure~\ref{fig:T2xI} that is homotopic to either the orange or the purple curve in Figure~\ref{fig:T2xI}.  
\end{enumerate}
Furthermore, if the link $L\subset T^2\times I$ is a precursor to a ribbon link $f(L)\subset S^3$, then the 2-manifold $\bigcup_j A_j\bigcup_i D_i\subset T^2\times I$ is a precursor to a collection of $n$ ribbon disks in $S^3$ with their boundary along $f(L)$. 
An $n$-component ribbon link is a link which bounds $n$ disjoint disks in $B^4$ such that the radial function on the 4-ball restricts to a Morse function on the disk that has only local minima. Equivalently, the link results from adding bands to an unlink . This is a standard fact in the study of knot concordance. 

\begin{figure}[h!]
\centering
\includegraphics[width = 160 mm]{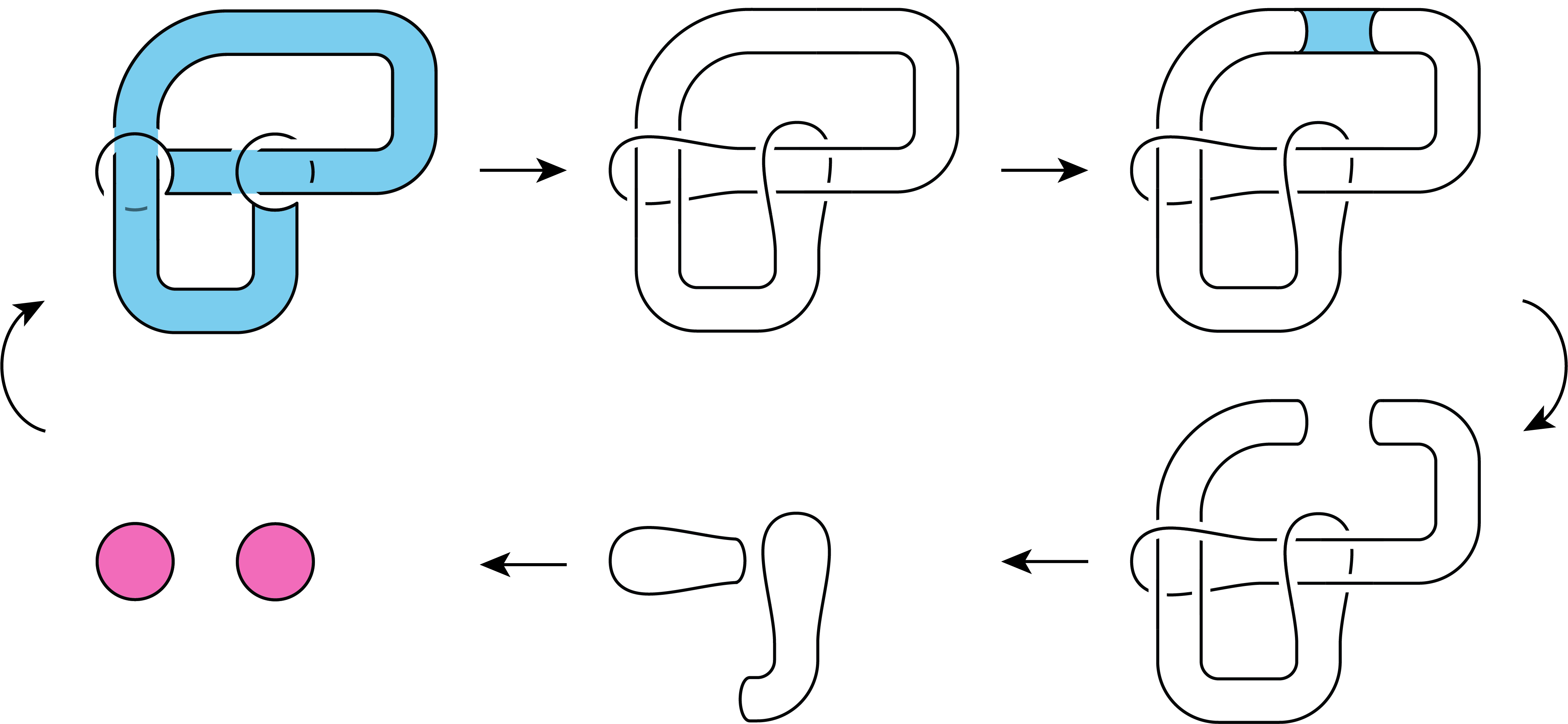}
\caption{\label{fig:ribbon_4D} } 
\end{figure}

The final step in the construction of an $m\times n$ swatch is a band surgery involving $m$ bands, which is illustrated in Figure~\ref{fig:swatch_construction}(c)-(d). 

\end{proof} 

\noindent The converse of \textbf{Theorem}~\ref{theorem:ribbon} 
does not hold. The link shown in Figure~\ref{fig:non_swatch}(a) is a counterexample. 

\begin{figure}[h!]
\centering
\includegraphics[width =174 mm ]{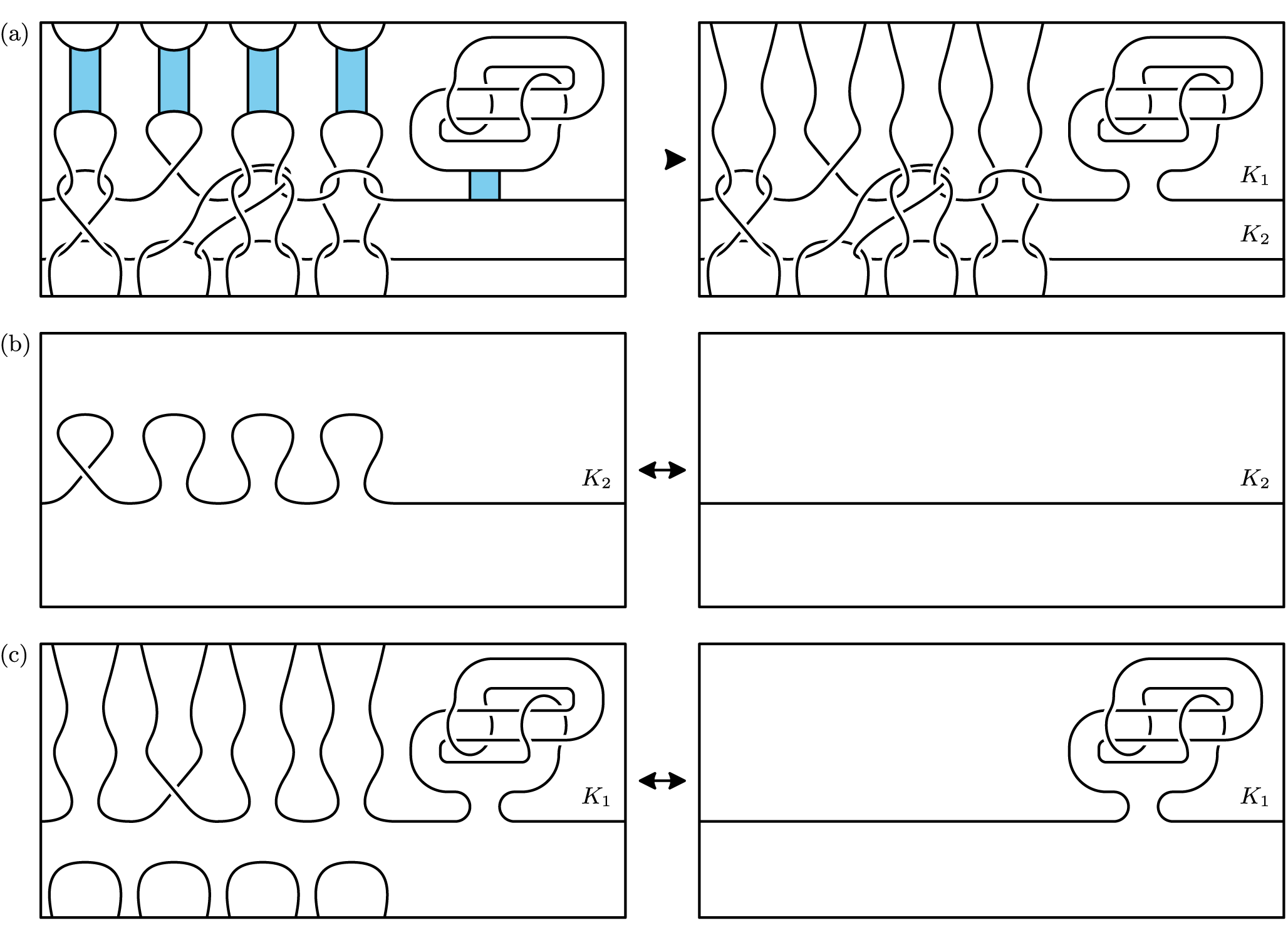}
\vspace{-10pt}
\caption{\label{fig:non_swatch}(a) A non-swatch link $L = K_1\cup K_2\subset T^2\times[0,1]$ with the ribbon property obtained by performing band surgery with respect to the blue rectangles followed by the band surgery with respect to the yellow rectangle. (b) Knot $K_2$ is the $0\times1$ unknit. (c) Knot $K_1$ is not an unknit.} 
\end{figure} 

\begin{lemma}
\label{lemma:non_swatch}
There exist links in $T^2\times I$ that are not swatches but lead to ribbon links in $S^3$ after Dehn filling $T^2\times I$ using the boundary homeomorphism with slopes $1/0$ and $0/1$.
\end{lemma}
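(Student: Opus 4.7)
The plan is to verify that the specific link $L = K_1 \cup K_2 \subset T^2 \times [0,1]$ depicted in Figure~\ref{fig:non_swatch}(a) realizes the required counterexample. Both properties --- that $f(L) \subset S^3$ is ribbon, and that $L$ is not a swatch --- will be checked directly from the diagram and the definitions, so the lemma reduces to two concrete verifications.

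For the ribbon property, I would leverage the band-surgery construction of $L$ directly. The link is obtained from a precursor unlink in $T^2 \times [0,1]$ by attaching the two blue bands and then the yellow band, so the disks and longitudinal annuli of the starting unlink, together with these bands, assemble into an immersed surface $\Sigma \subset T^2 \times [0,1]$ whose boundary consists of $L$ together with auxiliary arcs on $T^2 \times \{0\}$. By tracking how each band crosses the earlier disks, one checks that all self-intersections of $\Sigma$ are ribbon singularities. The sufficient conditions (1) and (2) identified in the proof of Theorem~\ref{theorem:ribbon} are then satisfied: each null-homotopic component of $L$ bounds an immersed disk in $T^2 \times [0,1]$ with only ribbon singularities, and each essential component cobounds an annulus with a curve on the boundary torus of the required slope. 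Extending $\Sigma$ across the solid tori glued in by the Dehn filling $f$ then produces immersed disks in $S^3$ with only ribbon singularities and boundary $f(L)$, so $f(L)$ is ribbon.

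For the non-swatch property, I would use that by Definition~\ref{def:swatch} every component of a swatch arises as a single longitudinal curve $\ell_i$ band-summed with null-homotopic unknit components via isotopies in $A \times I$ and knitting-position band surgeries. In particular, each swatch component is homologous to the longitude of the base torus of $T^2 \times [0,1]$, as emphasized in the discussion following Definition~\ref{def:swatch}. It therefore suffices to exhibit a component of $L$ whose class in $H_1(T^2 \times [0,1]) \cong \mathbb{Z}^2$ is not the longitudinal class, or whose knot type in $T^2 \times [0,1]$ cannot be produced from a longitudinal curve by the restricted set of swatch moves. As asserted in Figure~\ref{fig:non_swatch}(c), the component $K_1$ fails to be an unknit for precisely such a reason, while $K_2$ is the standard longitudinal curve shown in Figure~\ref{fig:non_swatch}(b).

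The main obstacle is making the second verification fully rigorous, because the swatch construction is flexible enough to produce some knotted essential components through complicated isotopies of the longitude inside $A \times I$. The cleanest route is to read off the homology class of $K_1$ directly from the diagram in Figure~\ref{fig:non_swatch}(c) and compare it with the longitudinal generator of $H_1(T^2 \times [0,1])$; the resulting mismatch provides an immediate homological obstruction to $L$ being a swatch, completing the proof.
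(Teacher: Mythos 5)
Your ribbon verification is consistent with what the paper does (the link in Figure~\ref{fig:non_swatch}(a) is produced by band surgeries on an unknit, and the disks, annuli and bands assemble into a ribbon surface as in the proof of Theorem~\ref{theorem:ribbon}), so that half is fine. The gap is in your non-swatch argument. Your ``cleanest route'' is to read off a mismatch between $[K_1]$ and the longitudinal generator of $H_1(T^2\times I)$, but you never verify that such a mismatch exists, and for this example it almost certainly does not: $K_2$ is the $0\times 1$ unknit (a single longitude), and $K_1$ is obtained by band-summing the remaining components of the precursor unknit, so $[K_1]$ is again the longitudinal class. Indeed, the paper notes that $K_1$ lies inside a longitudinal annular strip, which is exactly the situation in which the homology class gives no information. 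If the homological obstruction vanished, your proof would have to fall back on the knot-type alternative, which you yourself concede you cannot make rigorous precisely because the swatch construction is flexible enough to produce knotted essential components.

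The paper's actual obstruction is different and more subtle. It argues that a component of a swatch can fail to be an unknot winding once around only if its ribbon disk carries a pure ribbon singularity, and that in that case the sublink $f(K)\cup f(l)\subset S^3$ is necessarily \emph{not} split from the longitudinal component $f(l)$ of the Hopf link. For the link of Figure~\ref{fig:non_swatch}, the component $K_1$ is not an unknit (it is knotted in $T^2\times I$ and its ribbon disk has two pure ribbon singularities), yet because $K_1$ sits inside a longitudinal annular strip the sublink $f(K_1)\cup f(l)$ \emph{is} split. This combination --- knotted component with pure ribbon singularities that is nevertheless split from $f(l)$ --- is what Definition~\ref{def:swatch} forbids, and it is invisible to homology. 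To repair your proposal you would need to replace the homological test with this splitness criterion (or some other invariant of the pair $(K_1, f(l))$ beyond $H_1$).
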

\begin{proof}
Consider the link $L = K_1\cup K_2\subset T^2\times I$ resulting from the band surgery with respect to cyan bands as shown in Figure~\ref{fig:non_swatch}(a). We claim that this link cannot be constructed as a swatch. To see this, note that every component of an $m\times n$ swatch, say $L'\subset T^2\times I$, is an unknot that winds around the meridian of the base torus once unless the following is the case: one or more ribbon disks with their boundaries in $f(L')\subset S^3$ consist of pure ribbon singularities. Furthermore, if a component $f(K)\subset f(L')$ bounds a ribbon disk that has a pure ribbon singularity, then the sublink $f(K)\cup f(l)\subset H\cup f(L')$ in $S^3$ is not split; the link $H = f(l)\cup f(m)$ is the Hopf link, where $l$ (longitude), $m$ (meridian) are equivalent to the curves along the core of purple and orange tori in Figure~\ref{fig:T2xI}. In other words, there exists no $S^2\subset S^3$ such that on one side it encloses the component $f(K)\subset f(L')$ and on the other side lies the component $f(l)$ of the Hopf link. 

Going back to the link $L\subset T^2\times I$ shown in FIG~\ref{fig:non_swatch}(a), one of the ribbon disks with a component of the link $f(L)\subset S^3$ as its boundary gives rise to two pure ribbon singularities. Specifically, as shown in Figure~\ref{fig:non_swatch}(b) the component $K_2\subset L$ is equivalent to an unknot in $T^2\times I$, and the component $K_1\subset L$ with a pair of pure ribbon singularities, as shown in Figure~\ref{fig:non_swatch}(c), is not equivalent to an unknot in $T^2\times I$. Therefore, in this case, for the link $L\subset T^2\times I$ to be a swatch, we expect that the sublink $f(K_1)\cup f(l)\subset H\cup f(L)$ is not split in $S^3$. On the contrary, as shown in Figure~\ref{fig:non_swatch}(c), the knot $K_1$ lies within a longitudinal annular strip implying that the sublink $f(K_1)\cup f(l)\subset H\cup f(L)$ is split in $S^3$. Thus, even though the link $f(L)\subset S^3$ is ribbon, the link $L\subset T^2\times I$ is not a swatch.  

\end{proof}

\section{Topological properties of swatches}
Let $L \subset T^2\times I$ be an $n$-component textile link corresponding to a two-periodic weft-knitted textile, and let knots $K_i$ for $i\in\{1,2,\cdots,n\}$ be its components. The $(n+2)$-component link $H\cup f(L)\subset S^3$ is obtained by embedding the link $L\subset T^2\times I$ into $S^3$, where $H = f(l)\cup f(m)$ is the Hopf link. Let $[K]$ denote the homology class of a curve $K\subset T^2\times I$. 
Based on \textbf{Remark}~\ref{remark:textile_link_to_swatch}, the link $L$ is an $m\times n$ swatch for some $m\in\{1,2,\cdots\}$. As a result, the link $L\subset T^2\times I$ and the link $H\cup f(L)\subset S^3$ have the following properties:
\begin{enumerate}
\label{list:properties} 
\item The link $f(L)\subset S^3$ is ribbon.
\item The homology class of each and every component of the link $L$ is given by $[K_i] = 0.[m] + 1.[l] = [l]$, where $m$ and $l$ are the meridian and the longitude of the base torus of $T^2\times I$ respectively.
\item For $n\geq2$, every pair of components of the link $f(L)\subset S^3$ are \emph{algebraically unlinked} implying the same for the link $L\subset T^2\times I$.
\item Every component of the link $L$ is homologous to the longitude. However, the sublink $f(L)\cup f(l)\subset H\cup f(L)\subset S^3$ is \emph{not split} meaning that there does not exist any 2-sphere separating the component $f(l)$ and the link $f(L)$ in $S^3$. Equivalently, the link $f(L)$ and the component $f(l)$ are algebraically unlinked in $H\cup f(L)\subset S^3$, but not split.
\item Consider the ribbon disks corresponding to the components of the link $f(L)\subset S^3$ arising as a result of the Dehn filling with slopes $1/0$ and $0/1$. The boundaries of the disks having only mixed ribbon singularities are equivalent to unknots that are mutually split with the component $f(l)$ of the Hopf link. In contrast, the ribbon disks having even a single pure ribbon singularity have boundaries that are, in general, neither unknotted nor mutually split with the component $f(l)$ of the Hopf link. 
\end{enumerate}

\begin{figure}[h!]
\centering
 \includegraphics[width = 84 mm]{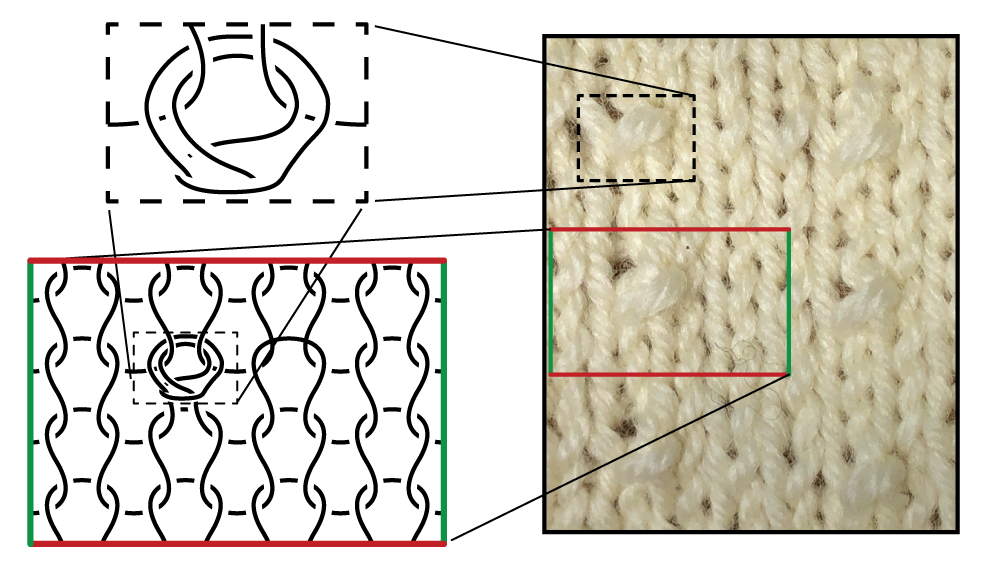}
\caption{\label{fig:cow_hitch}The cow hitch swatch, and a fabric sample with a few scattered cow hitch motifs.} 
\end{figure}

Based on the first two properties above, we discovered a new swatch that can be hand knitted. The open loop structure of the slip loop is replaced by a structure obtained by tying a knot into the bight that resembles the \emph{cow hitch} \cite{AoB}. The resulting swatch and a picture of a hand-knitted textile sample highlighting the cow hitch motif are shown in Figure~\ref{fig:cow_hitch}.

Let the set of all swatches be $\mathcal{W}$, the set of all textile links corresponding to two-periodic weft-knitted fabrics be $\mathcal{F}$, and the set of all links in $T^2\times I$ for which properties one to five in list~\ref{list:properties} hold be $\mathcal{L}$. The set of all swatches $\mathcal{W}$ is a proper subset of the set $\mathcal{L}$ because, we have showed in \textbf{Lemma}~\ref{lemma:non_swatch} that there exist non-swatch links in $T^2\times I$ leading to ribbon links in $S^3$ after Dehn filling.
However, it is not known if the set $\mathcal{F}$ is a proper subset of $\mathcal{W}$ as there is no mathematically well defined notion of weft-knitability that encompasses all the mechanical moves used in hand-knitting. In conclusion, for the sets $\mathcal{F}$, $\mathcal{W}$ and $\mathcal{L}$, the following relation holds: $\mathcal{F}\subseteq\mathcal{W}\subsetneq \mathcal{L}$.

\subsection{The linking number and algebraic linking} 
\label{sec:linking_number}
Before defining the linking number it is worth mentioning that there are many notions of linking in the literature for links in $S^3$ and equivalently for links in $\mathbb{R}^3$ \cite{Milnor_57}. Among those, the concept of \emph{algebraic linking}, which is associated with the \emph{linking number} defined below, is simple to understand and widely used.

\begin{definition}(The linking number)
\label{def:lkn}
Let $L = K_1\cup...\cup K_n\subset S^3$ be an n-component link. The linking number between the components $K_i$ and $K_j$ is given by 
\begin{equation}\label{eq:lkn}
\emph{lk}(K_i,K_j) = \Bigg|\frac{1}{2}\sum\limits_{k=1}^N\epsilon_k\Bigg|,
\end{equation}
where $\epsilon_k=\pm1$ is assigned to $k^{\textrm{th}}$ crossing based on the local orientation of under and over strands at that crossing as per the convention in described in Figure~\ref{fig:linking_number}(a).
\end{definition} 

Consider a map defined on the set of all links in $S^3$. If the image of the map is preserved under ambient isotopies, then the map defines a topological invariant of links in $S^3$ or a \emph{link invariant}. The linking number between two components of a link in $S^3$, is preserved under ambient isotopies. Thus, for an $n$-component link $K_1\cup...\cup K_n\subset S^3$, the set of pairwise linking numbers is a link invariant.  
Let us consider some simple examples: 1) the linking number of the Hopf link which is shown in FIG~\ref{fig:linking_number}(b) is equal to one, and therefore, it is not equivalent to the two-component unlink in $S^3$. 2) the linking number between any pair of components in the Borromean rings, shown in Figure~\ref{fig:linking_number}(c), is zero, which is same as that of the three component unlink in $S^3$. However, the three component unlink is not equivalent to Borromean rings. This illustrates the fact that, a pair of links with identical sets of pairwise linking numbers are not necessarily equivalent. Therefore, even though the linking number between two components of a link, as given in equation~\ref{eq:lkn}, is easy to compute, it yields a rather weak link invariant.

\begin{definition}[Split links]
\label{defn:split_links}
Given an $n$-component link $L \subset S^3$ with $n\geq2$. If there exists an embedding of $S^2$ that does not intersect the link but separates it into two non-empty subsets of components or sublinks, then the link $L\subset S^3$ is split.
\end{definition}
We say a two-component sublink in a link algebraically linked if their pairwise linking number is non-zero. Note that the property of algebraically unlinking is weaker than being split. 
For example, all three pairs of components of Borromean rings shown in Figure~\ref{fig:linking_number}(c) are algebraically unlinked but the link itself is not split. 
   
Let $L\subset T^2\times I$ be the $n$-component textile link corresponding to a two-periodic weft-knitted textile. 
The Dehn filling $f$ combined with the second property in the list~\ref{list:properties} about the homotopy of the components of a swatch imply that the linking numbers lk$(f(l),f(K_i))$ and lk$(f(m),f(K_i))$ are zero and one respectively, for all components $K_i\subset L$.
The component $f(l)$ and the components of the sublink $f(L)$ are algebraically unlinked, but they are not split, which is stated as the fourth property in the list~\ref{list:properties}. Similarly, consistent with the third property in the list~\ref{list:properties} the linking number lk$(K_i,K_j)$ is zero for all pairs of components $K_i,K_j\subset L$. Thus, the components $K_i\subset L$ and $K_j\subset L$ are algebraically unlinked, but may not be split for all $i,j\in\{1,2,\cdots,n\}$.

\begin{figure*}
\centering
\subfloat[A convention for assigning signs to crossings. $\epsilon=+1$ for the crossing motifs that resemble one on the left, and $\epsilon=-1$ for the rest.]
 {\includegraphics[width=40 mm]{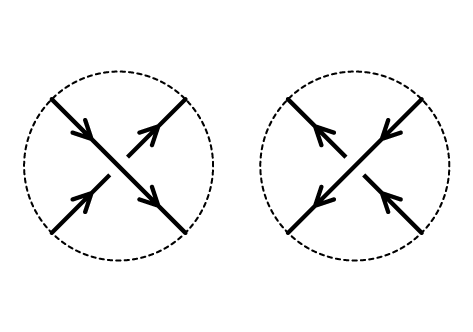} }
 \hspace{10pt}
\subfloat[The Hopf link: $\textrm{lk}(u_1,u_2)  = 1$.]
 {\includegraphics[width=40 mm]{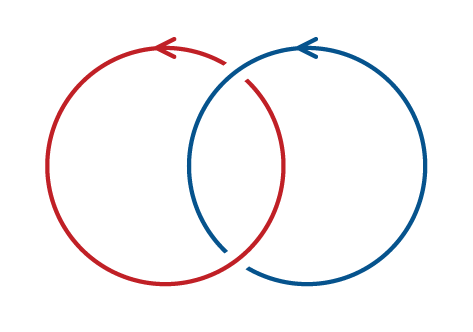} }
 \hspace{10pt}
\subfloat[The Borromean rings: $\textrm{lk}(v_i,v_j)  = 0$ for all $i,j\in\{1,2,3\}$ such that $i\neq j$.]
 {\includegraphics[width=40 mm]{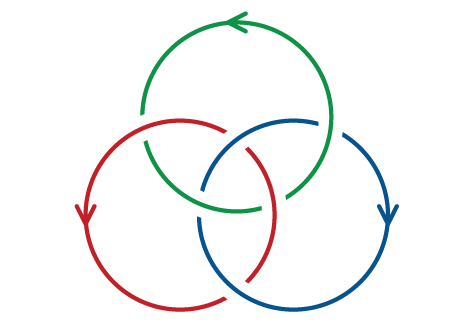} }
\caption{\label{fig:linking_number}Linking number and a few examples.}
\end{figure*}

\subsection{Are swatches hyperbolic?}
A link in $S^3$ is said to be \emph{hyperbolic} if its link complement in $S^3$ admits a metric with constant negative sectional curvature \cite{Purcell_2020}. Extending this notion to swatches, we say an $m\times n$ swatch $L\subset T^2\times I$ is hyperbolic if the $(n+2)$-component link $H\cup f(L)\subset S^3$ is hyperbolic. Based on our numerical experiments in \texttt{SnapPy} \cite{SnapPy} we conjecture that the swatches consisting of only the knit and the purl motifs give rise to hyperbolic links in $S^3$. \texttt{SnapPy} software is used to check whether a given link is hyperbolic or not based on the attribute $.solution\_type()$ of the link complements as 3-manifolds. 

Hyperbolicity is a very useful notion as many link invariants follow from the hyperbolic geometry of the link complement such as a representation of the fundamental group of the link complement also known as the \emph{link group}, \emph{hyperbolic volume}, \emph{invariant trace field}, \emph{cusp shapes} etc. We will discuss each of these separately in section~\ref{sec:invariants}. Our data, based on computations in \texttt{SnapPy}, indicates that there are many more hyperbolic swatches apart from just those consisting of the knit and the purl motifs. However, in contrast to the characteristic property of being ribbon stated in \textbf{Theorem}~\ref{theorem:ribbon}, hyperbolicity does not hold for all the swatches. 

\begin{remark}\label{remark:non_hyp}
The swatches giving rise to layered textiles are not hyperbolic because, the planes of separation in between the layers yield inessential splitting tori \cite{Purcell_2020} that separate the swatch into sublinks corresponding to different layers in the textile \cite{MORTON_2009}. 
\end{remark}

\begin{figure*}
\centering
\subfloat[]{\includegraphics[width = 65 mm]{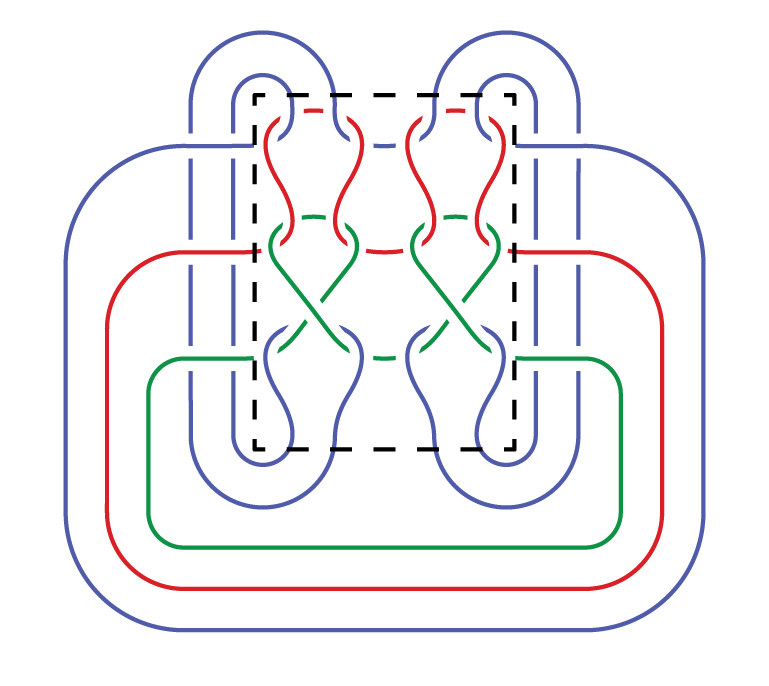}}
\hspace{10 mm}
\subfloat[]{\includegraphics[width = 65 mm]{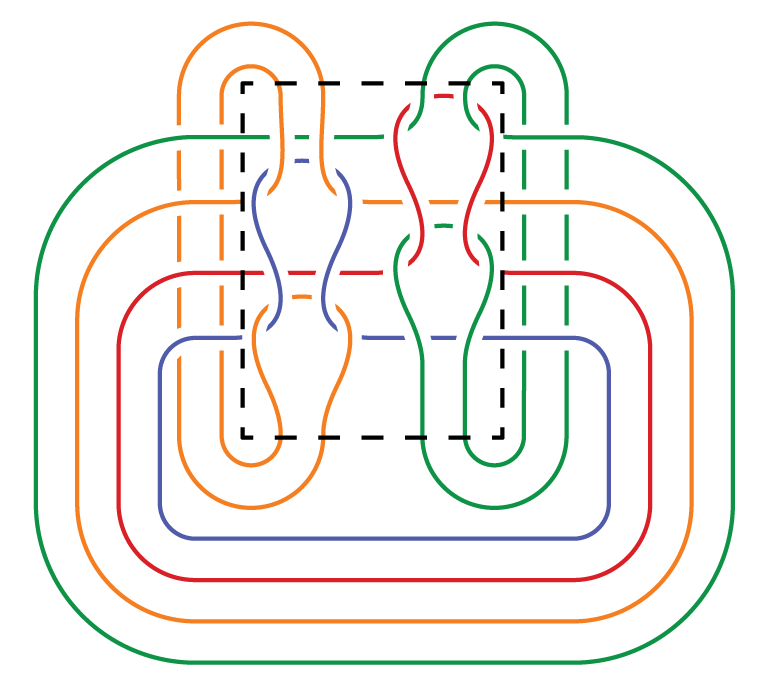}}
\caption{\label{fig:bru}(a) An example of a Brunnian link $f(L)\subset S^3$ corresponding to the Brunnian swatch which is shown within the dotted rectangular block. (b) Link $f(L)\subset S^3$ corresponding to the swatch which is shown within the dotted rectangular block is not Brunnian.} 
\end{figure*} 

\subsection{Brunnian swatches}
Many swatches have the following property: deletion of a component of the link leads to a trivial swatch. For such links, each and every proper sublink is a trivial swatch. Therefore, given an $m\times n$ swatch $L\subset T^2\times I$ with the above property, every proper sublink of the corresponding $n$-component link $f(L)\subset S^3$ is an unlink. These links are called \emph{Brunnian links} \cite{Brunn_97}.  

\begin{definition}[Brunnian links and Brunnian swatches]
Let $L = \bigcup\limits_{i=1}^n K_i\subset S^3$ be an n-component link, where $K_i$ are the components of the link $L$. If the $(n-1)$-component sublink $L_j = L\setminus K_j  = \bigcup\limits_{i\neq j}K_i \subset L\subset S^3$ is equivalent the $(n-1)$-component unlink for all $j\in\{1,2,...,n\}$, then the link $L$ is said to be Brunnian. Similarly, let $L' = \bigcup\limits_{i=1}^n K'_i\subset T^2\times I$ be an $m\times n$ swatch, where $K'_i$ are its components. If the $(n-1)$-component sublink $L'_j = L'\setminus K'_j  = \bigcup\limits_{i\neq j}K'_i \subset L'\subset T^2\times I$ is equivalent to the $(n-1)$-component trivial swatch for all $j\in\{1,2,...,n\}$, then we say that the swatch $L'$ is Brunnian.
\end{definition}

The \emph{Borromean rings}, shown in Figure~\ref{fig:linking_number}(c), is a Brunnian link. A Brunnian swatch and a non-brunnian swatch are highlighted within dashed rectangular blocks in Figure~\ref{fig:bru}(a) and Figure~\ref{fig:bru}(b) respectively. The latter can be obtained as a motif in a weft-knitted textile constructed via hand-knitting by slipping every other slip loop on the left needle without pulling a new loop through it. This amounts to alternatively skipping steps two and three shown in Figure~\ref{fig:knitting}(a)-(d). 

The special nature of Brunnian swatches is reflected in their values of the topological invariants. For instance, every proper sublink of a Brunnian link is an unlink implying all the proper sublinks are split, and thus, the ordered list of linking numbers is an $n$-tuple of zeros. Furthermore, all the components of a Brunnian link are unknots. In section~\ref{sec:mva} on the multivariable Alexander polynomial -- a multivariable Laurent polynomial link invariant -- we will see that the multivariable Alexander polynomial of the links in $S^3$ corresponding to Brunnian swatches simplifies to a reduced form given by equation~\ref{eqn:brun}.   
   
\section{An algebra for swatches}
\label{section:annulus_sums}

\subsection{The meridional and the longitudinal annulus sums}
In weft-knitting, a common technique of making complex patterns of yarn embeddings is combining motifs of simple ones. For example, one obtains the motifs of the one-by-one rib and the garter stitch patterns by `adding'  or combining the motifs of the stockinette stitch patten i.e., the knit and the purl. In terms of swatches, this idea of combining motifs translates to cutting open link complements of the swatches, along either their meridional or longitudinal annular cross-sections, followed by gluing them. We restrict only to those instances where the result of the operation is the link complement of another swatch, and as a consequence, the set of swatches is closed under the binary operation of interest by construction. Combining link complements of the knit swatch and the purl swatch in this fashion leads to link complements of one-by-one rib swatch and the garter swatch, which are shown in Figure~\ref{fig:gluing}(a) and Figure~\ref{fig:gluing}(b) respectively. 
   
Based on the homotopy type, the 3-manifold $T^2\times I$ has two kinds of cross-sections -- meridional and longitudinal -- which are shown in Figure~\ref{fig:annulus}(a) and Figure~\ref{fig:annulus}(e) respectively. These correspond to the two ways in which the link complements of the knit swatch and the purl swatch are glued leading to the one-by-one rib swatch (meridional) and the garter swatch (longitudinal). Given a pair of swatches, we exclude a discussion of the general case and consider only slicing and gluing along the green annulus (meridional cross-section) and the red annulus (longitudinal cross-section). The green and red annuli correspond to the green and red edges in Figure~\ref{fig:swatch_construction}. For a single swatch, the slicing leads to a connected 3-manifold with two punctured annuli boundaries as shown in Figure~\ref{fig:annulus}(a) and Figure~\ref{fig:annulus}(e). These boundaries are then glued by identifying the punctures to get the link complement of another swatch.

\begin{definition}[Annulus sums]\label{def:gluing}
Let $L_1\subset T^2\times I$ be an $m\times n$ swatch and $L_2\subset T^2\times I$ be an $m'\times n$ swatch. Then the link complements of $L_1$ and $L_2$ are denoted by $X_{L_1}$ and $X_{L_2}$ respectively. Recall from the definition of swatch a diagram of a swatch incl have chosen a specific quotient map for the torus as a quotient of the plane. Therefore for each swatch we have two distinguished annuli sitting in $T$

Cut open the 3-manifold $X_{L_1}$ along its green punctured annulus (which is specified through its construction as a swatch). Similarly, cut open the 3-manifold $X_{L_2}$ along its green punctured annulus. Glue the resulting 3-manifolds along their punctured annuli boundaries such that the punctures are identified seamlessly forming a continuous tunnel. Thus, we obtain the 3-manifold $X_{L_1*_mL_2}$, which is the link complement of an $(m+m')\times n$ swatch $L_1*_mL_2\subset T^2\times I$. We call this operation that acts on the link complements of two swatches, a meridional annulus sum.

Let $L'_2\subset T^2\times I$ be an $m\times n'$ swatch. Then the link complement of $L'_2$ is given by $X_{L'_2}$. Slice the 3-manifolds $X_{L_1}$ along its red punctured annulus (which is specified through its construction as a swatch). Similarly, slice the 3-manifolds $X_{L'_2}$ along its red punctured annulus. Glue the resulting 3-manifolds along their punctured annuli boundaries such that the punctures are identified seamlessly forming a continuous tunnel. Thus, we obtain the 3-manifold $X_{L_1*_lL'_2}$, which is the link complement of an $m\times(n+n')$ swatch $L_1*_lL'_2\subset T^2\times I$. We call this operation that acts on the link complements of two swatches, a longitudinal annulus sum.
\end{definition}

The annulus sums form associative binary operations, and thus can be extended to combine more than two swatches. The cyclic permutation symmetry in the order of gluing the sliced link complements gives rise to redundancies in the number of swatches that are created through the annulus sums.

\begin{remark}\label{remark:cyclic_permutation}
Let $L_i\subset T^2\times I$ be an $m_i\times n$ swatch for $i\in\{1,2,\cdots,N_1\}$. The swatches $L_{\sigma_1(1)}*_mL_{\sigma_1(2)}*_m\cdots*_m L_{\sigma_1(N_1)} \subset T^2\times I$ and $L_{\sigma_2(1)}*_mL_{\sigma_2(2)}*_m\cdots*_mL_{\sigma_2(N_1)} \subset T^2\times I$ are identical, where $(\sigma_{1,2}(1),\sigma_{1,2}(2), \cdots, \sigma_{1,2}(N_1))$ denote a pair of cyclic permutations of the $N_1$-tuple $(1,2,\cdots,N_1)$. Similarly, if $L'_i\subset T^2\times I$ is an $m\times n_i$ swatch for $i\in\{1,2,\cdots,N_2\}$. Then the swatches $L'_{\sigma_1(1)}*_lL'_{\sigma_1(2)}*_l\cdots*_l L'_{\sigma_1(N_2)} \subset T^2\times I$ and $L'_{\sigma_2(1)}*_lL'_{\sigma_2(2)}*_l\cdots*_lL'_{\sigma_2(N_2)} \subset T^2\times I$ are identical, where $(\sigma_{1,2}(1),\sigma_{1,2}(2), \cdots, \sigma_{1,2}(N_2))$ denote a pair of cyclic permutations of the $N_2$-tuple $(1,2,\cdots,N_2)$.
\end{remark} 

\begin{figure}[h!]
\centering
\subfloat[Gluing a knit swatch and a purl swatch along their meridional annular cross-section yields a 1$\times$1 rib swatch.]
{\includegraphics[width=40 mm]{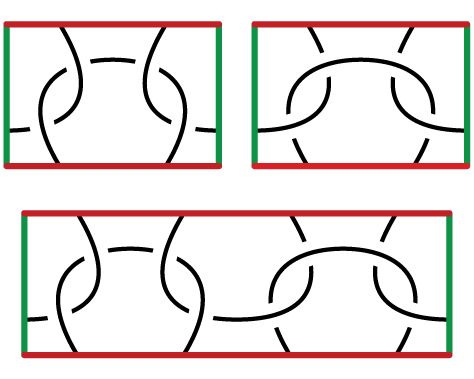}}
\hspace{4mm}
\subfloat[Gluing a knit swatch and a purl swatch along their longitudinal annular cross-section yields a garter swatch.]
 {\includegraphics[width=40 mm]{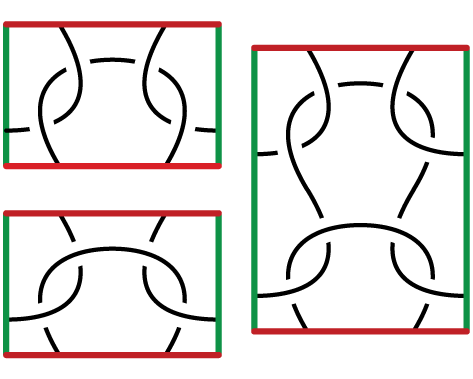}}
 \caption{\label{fig:gluing}Examples of the meridional gluing and the longitudinal gluing of swatches.}
\end{figure} 

\begin{figure}[h!]
\centering
\subfloat[Combining swatches horizontally by cutting 2-tori along their meridional cross-section.]
{\includegraphics[width=35mm]{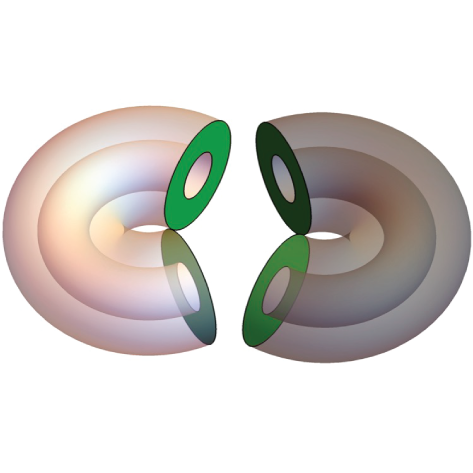}}
\hspace{2mm}
\subfloat[The cut 3-manifolds are then glued together along the boundary annuli.]
{\includegraphics[width=35mm]{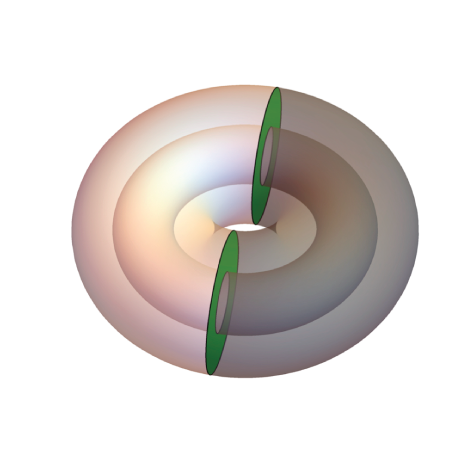}}
 \hspace{2mm}
\subfloat[In the $S^3$ picture, this can be algebraically realized using band surgery.]
{\includegraphics[width=35mm]{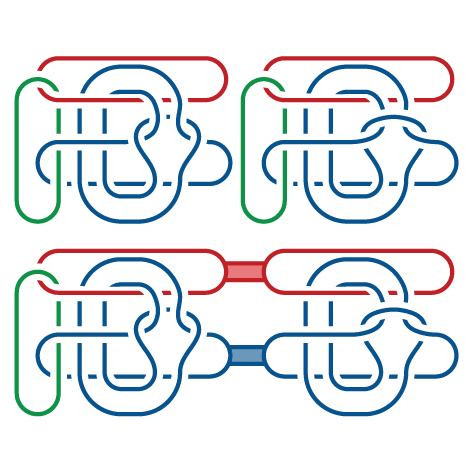}}
\hspace{2mm}
 \subfloat[$1\times1$ ribbing from a meridional annulus sum.]
{\includegraphics[width=35mm]{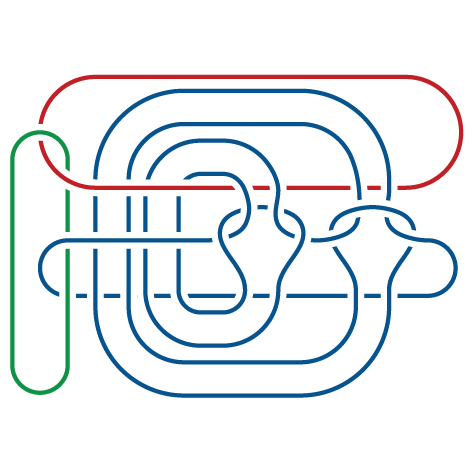}}
\\
\subfloat[Combining swatches vertically by cutting 2-tori along their longitudinal cross-section.]
{\includegraphics[width=35mm]{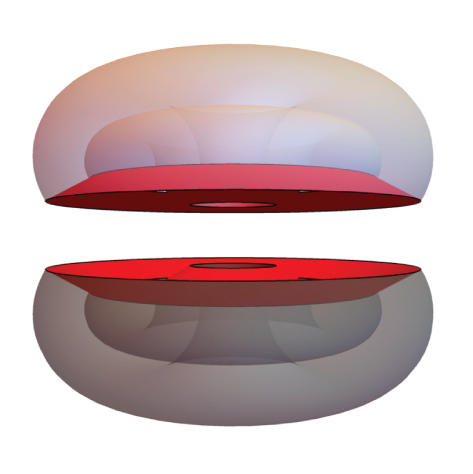}}
 \hspace{4mm}
\subfloat[The cut 3-manifolds are then glued together along the boundary annuli.]
 {\includegraphics[width=35mm]{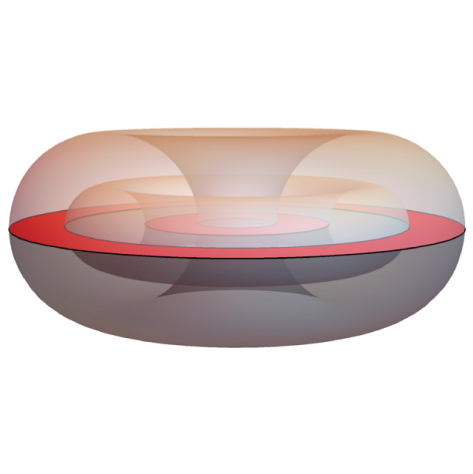}}
\hspace{2mm}
\subfloat[In the $S^3$ picture, this can be algebraically realized using band surgery.]
{\includegraphics[width=35mm]{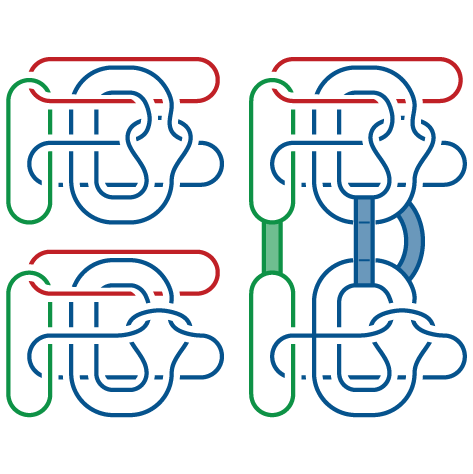}}
 \hspace{2mm}
 \subfloat[Garter stitch from a longitudinal annulus sum.]
{\includegraphics[width=35mm]{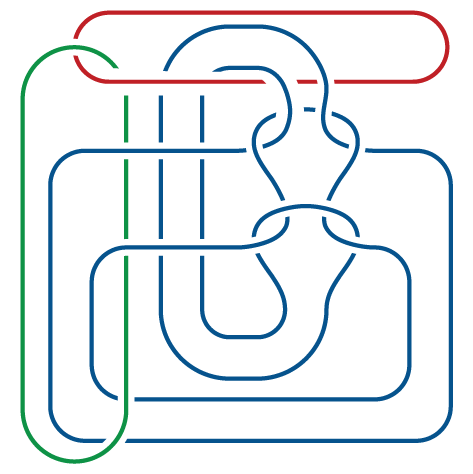}}

\caption{Each type of \emph{annulus sum} performed on the link complements yield a procedure for combining knit and purl swatches leading to a compound swatch corresponding two-periodic weft-knitted fabrics.}
 \label{fig:annulus}
 \vspace{-5pt}
\end{figure}

\subsection{Partitioning the set of all swatches $\mathcal{W}$}\label{subsec:partition}

The set of all swatches can be partitioned into subsets indexed by the number of components such that each partition by itself is a closed set with the meridional annulus sum as the associative binary operation. A meridional annulus sum acting on an $m\times n$ swatch and the $n$-component trivial swatch leaves the $m\times n$ swatch unaltered, and therefore, each closed subset in the partition has an identity element with respect to meridional annulus sums. The closure property and the existence of identity element implies that the set of all $m\times n$ swatches (where $n\in\mathbb{N}$ is fixed and $m\in\mathbb{N}$ is arbitrary) with the meridional annulus sum is a \emph{monoid}.

\begin{definition}[A monoid and a semigroup]
A Semigroup is a set closed under an associative binary operation. A Monoid is a semigroup with an identity element.
\end{definition}
Let $\mathcal{W}$ and $\mathcal{P}_n$ be the set of all swatches and the set of all $m\times n$ swatches for some fixed $n\in\mathbb{N}$ and arbitrary $m\in\mathbb{N}$ respectively. Then as a result of the partitioning we have 
\begin{equation}
\label{eqn:partition}
\mathcal{W} = \bigcup\limits_{n=1}^{\infty} \mathcal{P}_n.
\end{equation}
To get some insight into the composition of the monoid $(\mathcal{P}_n,*_m)$ for some $n\in\mathbb{N}$, we start by enumerating the elements in a submonoid of $(\mathcal{P}_1,*_m)$. In the discussion below, we do not explicitly give the slicing and gluing data while referring to annulus sums of swatches as the details are not necessary.

Suppose $e$, $k$ and $p$ denote the one-component trivial swatch, the knit swatch and the purl swatch respectively. Then the set of strings of letters in $k$ and $p$ generated using concatenation as the associative binary operation, which are referred to as words, is a monoid. The generating set is given by $\{k,p\}$ (the alphabet) and the identity element or the empty word is given by $e$. Then 
\begin{align*}
&ke = ek = k\\ 
&pe = ep = p, 
\end{align*}
and thus, we obtain the free monoid $\mathcal{M}_1 = \{e, k, p, kk, kp, pk, pp, kkk, kkp, ...\}$. The generators or the generating elements of a set are the elements in the set that do not admit any non-trivial decomposition with respect to the underlying binary operation. 
 
\begin{figure}[h!]
\centering
\subfloat[The cow hitch swatch.]
 {\includegraphics[width=40mm]{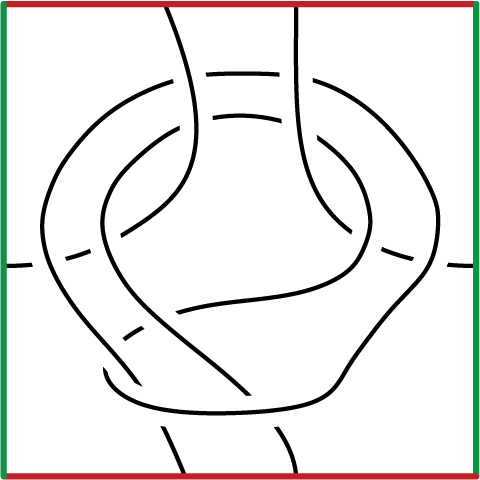} }
 \hspace{3mm}
\subfloat[A purl-like swatch with two twists at the base of the bight.]
 {\includegraphics[width=40mm]{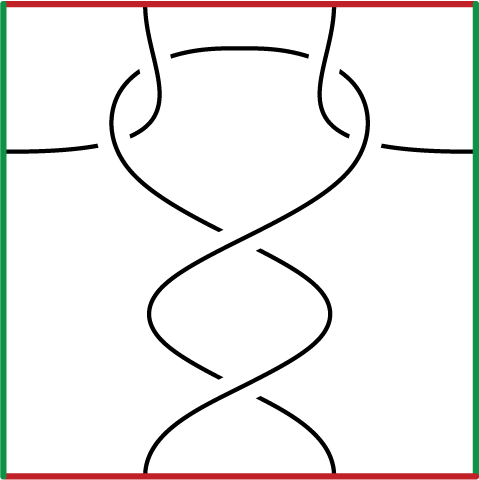} }
\\
\subfloat[A knit-like swatch with a bight that requires all three Reidemeister moves.]
 {\includegraphics[width=40mm]{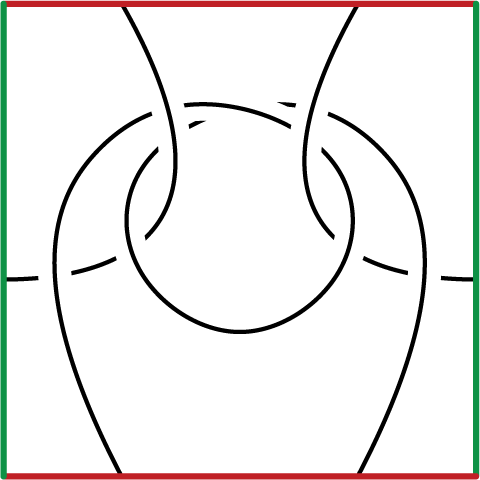} }
\caption{Three $1\times1$ swatches in the generating set of $\mathcal{P}_1$ which are distinct from knit $\&$ purl swatches.}
\label{fig:alphabets_p1}
\end{figure}

The $m\times1$ swatches corresponding to the elements of the free monoid $\mathcal{M}_1$ are not all distinct. The redundancies in swatches are associated with the words related in the following manner: 1) words with the same repeating unit of letters, and 2) words related by cyclic permutations in the order of concatenation of letters. Let $\tilde{\mathcal{M}_1}\subset\mathcal{M}_1$ be the set obtained by modding out the free monoid $\mathcal{M}_1$ with respect to an equivalence relation defined based on the points 1) and 2) above. Then the set $\tilde{\mathcal{M}_1}$ is a submonoid of the set of all $m\times1$ swatches $\mathcal{P}_1$. 

The submonoid $\tilde{\mathcal{M}_1}\subseteq\mathcal{P}_1$ does not account for all the elements in $\mathcal{P}_1$. To see this, note that both the elements in the generating set $\{k,p\}$ denote $1\times1$ swatches, and there exist $m\times1$ swatches, like the one shown in Figure~\ref{fig:cross_stitch}(b) that cannot be generated by the meridional annulus sum because it is not reducible. Moreover, as shown in Figure~\ref{fig:alphabets_p1}(a)-(c) there exist $1\times1$ swatches in the generating set that are distinct from the knit swatch and the purl swatch.  

There are countably infinitely many number of ways of tying knots into bights, and as a result, the number of generating swatches of the set of all $m\times1$ swatches, $\mathcal{P}_1$ is countably infinite. However, unlike hand knitting, the number of elementary mechanical operations a weft-knitting machine is capable of performing is limited. Therefore, the generating set is finite and weft-knitted textiles consisting of $1\times1$ swatches shown in Figure~\ref{fig:alphabets_p1}(a)-(c) cannot be made using a typical weft-knitting machine.

Based on similar arguments as above, one can conclude that the set of generating swatches of the set of all $m\times2$ swatches $\mathcal{P}_2$ consists of countably infinitely many distinct $1\times2$ swatches. And in addition, there exist $m\times2$ swatches with $m\geq2$ that cannot be constructed by the action of meridional annulus sums on $1\times2$ swatches, and therefore, are generating swatches themselves. A subset of $1\times2$ swatches in the generating set of the set of all $m\times2$ swatches $\mathcal{P}_2$ is obtained by the longitudinal annulus sum acting on $1\times1$ swatches. In particular, the set $\{k*_lk, k*_lp, p*_lp\}$ consists of generators composed of only the knit and the purl swatches. Since the longitudinal annulus sum acting on a pair of $m\times1$ swatches yields an $m\times2$ swatch, it is clear that the set $\mathcal{P}_1*_l\mathcal{P}_1$ is contained in the set of all $m\times 2$ swatches $\mathcal{P}_2$. Further, the $1\times2$ swatch shown in Figure~\ref{fig:cross_stitch}(b) implies that the set $\mathcal{P}_2$ is strictly bigger than the set $\mathcal{P}_1*_l\mathcal{P}_1$. 
In general, if $\sum\limits_{i\in\mathcal{I}}n_i = n$ such that $m_i,n\in\mathbb{N}$ for all $i\in\mathcal{I}:=\{1,2,...,k\}$, then 
\begin{equation}
\mathcal{P}_{n_1}*_l\mathcal{P}_{n_2}*_l...*_l\mathcal{P}_{n_k}\subset\mathcal{P}_n. 
\end{equation}
Indeed this must be the case because as we add more number of components, the entanglement complexity in terms of knottiness and linking increases.

\begin{figure}[h!]
\centering
\includegraphics[width = 160mm]{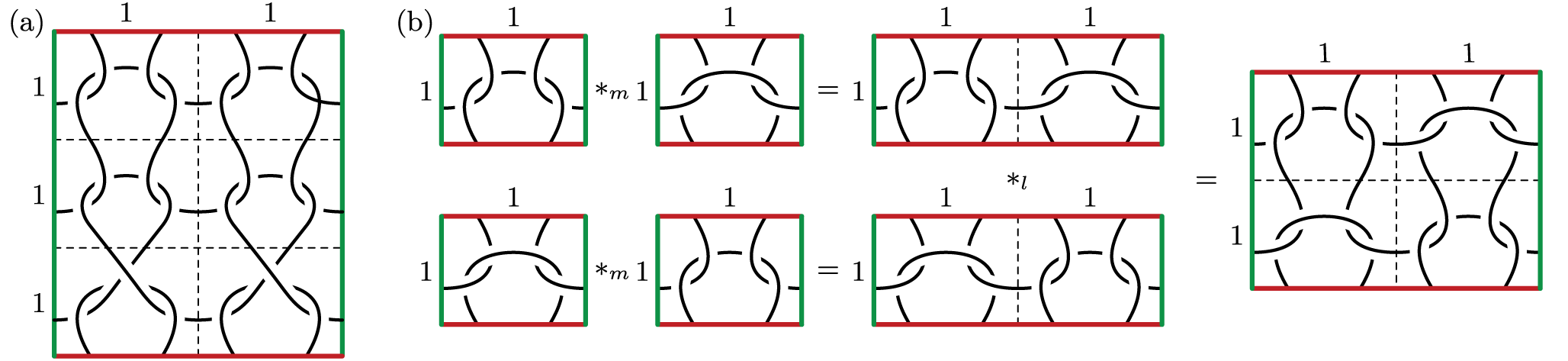}
\caption{\label{fig:grid}(a) A $2\times 3$ swatch composed of $(1,1)$ blocks consisting of $1\times1$ swatches. 
(b) The partition of a $2\times2$ swatch (\emph{seed stitch}) consistent with its decomposition using gluing operations. The integers along the row (top most) and the column (extreme left) of an $m\times n$ swatch indicate the corresponding partition of $m$ and $n$ respectively.} 
\end{figure} 

\begin{figure}[h!]
\centering
 \subfloat[$1\times1$ \emph{cable}: a $2\times1$ irreducible swatch.]{\includegraphics[width = 49mm]{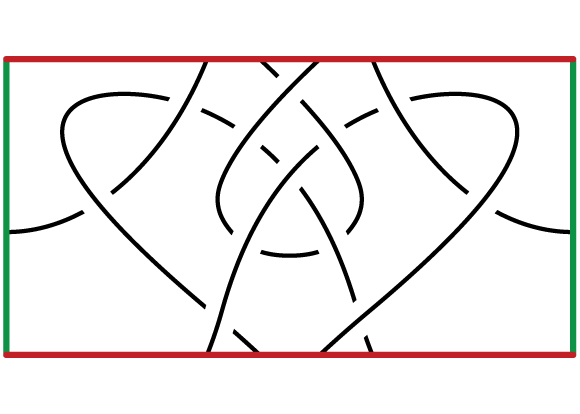}}
\hspace{2.5mm}
\subfloat[A $1\times2$ swatch with a knit-forward-backward motif and a knit-two-together motif.]
{\includegraphics[width = 25mm]{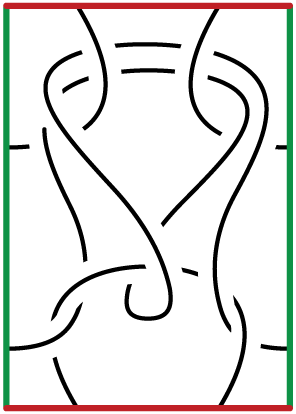}}
\\
\subfloat[A $3\times3$ swatch with a slip-slip-knit motif and a knit-forward-backward-forward motif.]
{\includegraphics[width = 73mm]{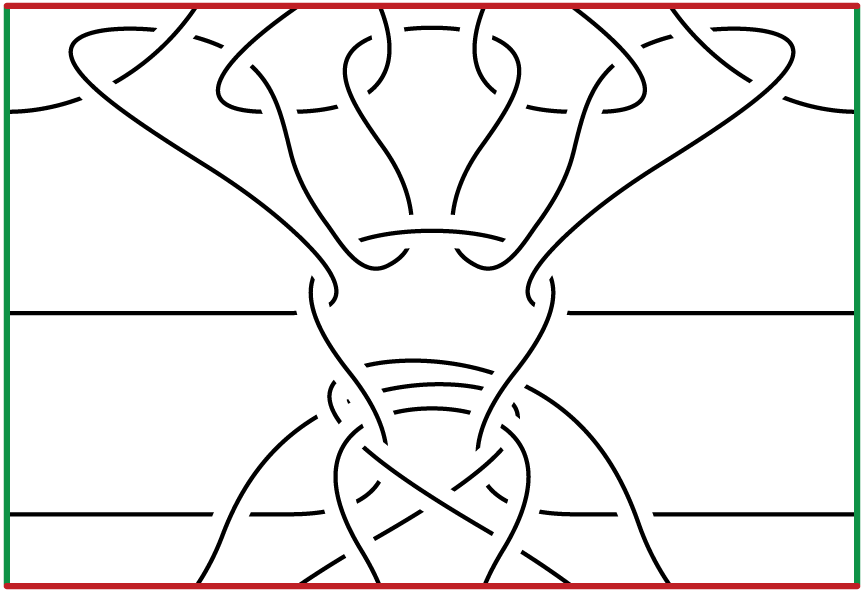}}
\caption{Irreducible swatches} 
 \label{fig:cross_stitch}
 \end{figure}

\subsubsection{Irreducible swatches}
A trivial swatch can be constructed by applying no RM moves but doing only band surgeries (with respect to a set of bands in knitting position). Therefore, the protocol or the procedure of constructing a swatch is unique up to making the trivial swatch with the same number of components and redundant ambient isotopies. As a result, it is possible that a non-empty subset of contractible loops in an unknit do not effect the topology of a given target swatch, in which case that swatch may be realized as an $m\times n$ swatch for infinitely many choices of $m\in\{1,2,\cdots\}$. Here we address this redundancy due to trivial swatches, as we want to be able to get an estimate of number of distinct of ways of constructing an $m\times n$ swatch.
\begin{definition}[Irreducible and compound swatches]\label{defn:irreducible}
An $m\times n$ swatch is irreducible if the following hold:
\begin{itemize}
\item In the 3-step procedure of constructing a swatch, for an unknit in knitting position (as in FIG~\ref{fig:swatch_construction}(c)), we require that none of the $m$ contractible loops can be separated or split from the rest of the unknit while keeping the other components fixed.
\item The swatch cannot be constructed from a set of non-trivial swatches using the longitudinal and the meridional annulus sums.
\end{itemize}
A swatch that is not irreducible is called a compound swatch.
\end{definition} 
\noindent Clearly, all the non-trivial $1\times1$ swatches are irreducible. Below, we briefly describe some knitting operations leading to a few higher order $m\times n$ irreducible swatches (for $m,n\geq2$) that are frequently encountered in the knitting literature \cite{Durant_2015,Shida_2017, Pavilion_2015}:
\begin{enumerate}
\item \emph{Cables or cross-overs:} The simplest example of cable swatch with a single cross-over is shown in Figure~\ref{fig:cross_stitch}(a). In general a cable swatch consists of cross overs involving shift of $l$ slip loops (bights) to the left and $r$ slip loops to the right, where each slip loop is shifted by one block in a given a single row. Here, by slip loops in a row we mean the slip loops belonging to the same yarn, which are aligned along the weft axis of a two-periodic weft-knitted textile.  
\item \emph{Stitch increases and decreases:} The swatch in Figure~\ref{fig:cross_stitch}(b) is obtained by increasing and decreasing the number of slip loops by one in successive rows. In this case, an increase in the number of slip loops is achieved by knitting through front followed knitting through back before pushing the slip loop off of the left needle. A decrease in the number of slip loops results from knitting two or more loops together before pushing them off of the left needle. As illustrated in Figure~\ref{fig:cross_stitch}(c), the increase and the decrease need not happen in adjacent rows or in the same column of slip loops (where the column of loops is determined by the wale axis of a two-periodic weft-knitted textile), and the increase and the decrease in the number of slip loops within a row can be more than one as long as they match when counted over the whole swatch.
\item \emph{Stitch decreases and yarn-overs:} In Figure~\ref{fig:yarnover_decrease}, the $2\times1$ swatch within the larger $4\times2$ swatch illustrates another way of increasing the number of slip loops called the \emph{yarn-over}. In this case, the extra slip loop has the form of  a right-leaning twisted knit motif. The decrease in the number of slip loops is achieved by knitting more than two loops together before taking it off the left needle. Similar to the irreducible swatches in the second item above, the increase and the decrease in the number of slip loops within a row can be more than one as long as they match when counted over the whole swatch.
\item \emph{Slipping stitches:} A slip loop can be transferred from the left needle to the right needle  without pulling another slip loop through it, and this can be repeated over multiple rows before taking it off the needle by pulling through another slip loop. The swatch shown in both 
and Figure~\ref{fig:bru}(b) is made by slipping bights alternatively exactly once. This operation is essential for constructing non-trivial swatches that are not Brunnian.
\end{enumerate}
\noindent We allow for dual roles for some irreducible swatches as they appear in different forms in decomposition of compound swatches. This we state as a remark below.
\begin{remark}\label{remark:increase_decrease}
In the decomposition of a compound swatch, the irreducible swatches discussed in item number two above can arise in two forms. For example, the swatch shown in Figure~\ref{fig:cross_stitch}(b) can be constructed as a $1\times2$ swatch and a $2\times2$ swatch, and the one shown in Figure~\ref{fig:cross_stitch}(c) can be generated as a $1\times3$ swatch and a $3\times3$ swatch.
\end{remark}

\subsubsection{The number of $m\times n$ swatches}

The concept of annulus sums and irreducible swatches can be used to get a lower bound on the total number of $m\times n$ swatches for some fixed $m,n\in\mathbb{N}$. An $m\times n$ swatch is either an irreducible swatch or a compound swatch. If it is the latter, then there exists a unique way to divide the rectangle of width $m$ and height $n$ into a finite number of subrectangles of width $a_i$ and height $b_i$ such that the $(a_i,b_i)$ subrectangle embeds an $a_i\times b_i$ irreducible swatch for all $a_i\leq m$ and $b_i\leq n$. This decomposition of a bigger swatch into smaller swatches is due to the meridional and the longitudinal annulus sums. For example, the decomposition in Figure~\ref{fig:grid}(a)-(b) show a division into subrectangles of unit height and unit width. Furthermore, the construction of the $2\times2$ swatch corresponding to the seed fabric, starting with the knit swatch and the purl swatch is illustrated in Figure~\ref{fig:grid}(b). In Figure~\ref{fig:yarnover_decrease}, a $3\times2$ swatch is decomposed into a set four $(1,1)$ swatches and a $(2,1)$ irreducible swatch.

\begin{figure}
\centering
\includegraphics[width = 84 mm]{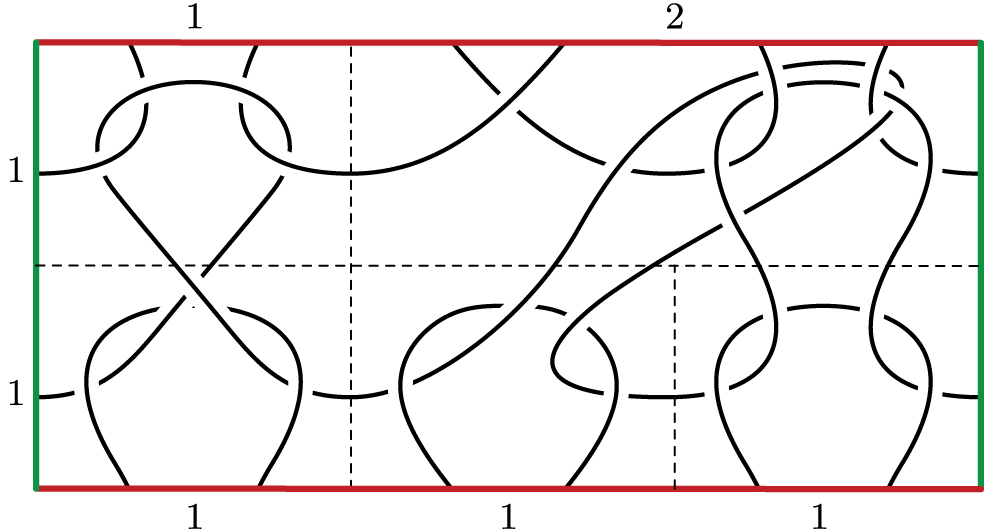}
\caption{\label{fig:yarnover_decrease}A $4\times2$ swatch consisting of a $2\times1$ irreducible swatch resulting from a \emph{stitch decrease by knitting two together} followed by a \emph{right-leaning twisted yarn over}.} 
 \end{figure}

\subsubsection{Complexity of a swatch}
     
For an $m$ by $n$ irreducible swatch, in part, the complexity can be attributed to the values of $m,n\in\mathbb{N}$ in the sense that, higher the values of $m,n$ higher is its complexity. The other part of the complexity is associated with the nature of knot tying mechanism used in making the bights. For instance, in case of a typical weft-knitting machine, the only ways of making bights corresponds to the mechanisms used in constructing the knit swatch and the purl swatch as opposed to the cow hitch swatch which can only be realized through hand knitting techniques. Both of these notions of complexity can be quantified using link invariants such as the hyperbolic volume. In TABLE~\ref{tab:invariants0}, if we compare the hyperbolic volumes of the the knit swatch and the knit-like cow hitch swatch, the latter is higher implying that the latter is more complex. And as expected, the hyperbolic volume of the $1\times1$ cable swatch -- a $2\times1$ irreducible swatch shown in Figure~\ref{fig:cross_stitch}(a) -- is greater than the hyperbolic volume of each and every $1\times1$ swatch listed in TABLE~\ref{tab:invariants0}. 

Our brief analysis of the origin of complexity in irreducible swatches naturally carries over to compound swatches in the following sense: the complexity of a compound swatch is roughly the `sum' of complexities of all the irreducible swatches that it is composed of. For example, in TABLE~\ref{tab:invariants0}, the hyperbolic volume of the link corresponding to $1\times1$ rib swatch Figure~\ref{fig:gluing}(a) is equal to the sum of the hyperbolic volumes of links corresponding to the knit swatch and the purl swatch. Also, we observe that the multivariable Alexander polynomial of the link associated with $1\times1$ rib swatch is equal to the product of the multivariable Alexander polynomials of links associated with the knit swatch and the purl swatch divided by a factor of $(t_1-1)$ (see TABLE~\ref{tab:invariants0}), where $t_1$ is the variable corresponding to the generator of the translation along the meridian in the Abelian cover of the link complement. In our experimental calculations using \texttt{SnapPy}, the relations stated above hold for all the $m\times1$ compound swatches. We propose conjecture~\ref{conjecture:mva} on the multivariable Alexander polynomial, and conjecture~\ref{conjecture:hyp} on the hyperbolic volume of links in $S^3$ that are associated with swatches. 
 
\section{Topological invariants of swatches}
\label{sec:invariants}
We compute few link invariants, and apply the correspondence between swatches and links in $S^3$ to classify and characterize two-periodic weft-knitted textiles. A \emph{link invariant} is the image of a map defined on the set of all links, as ordered sets of knots in $S^3$, which is preserved under ambient isotopies. We seek link invariants that conform with the properties of stitch patterns of two-periodic weft-knitted textiles. Even though for every two-periodic weft-knitted textile there exists a unique swatch embedded inside the fundamental translational unit, the quotient map can be chosen to yield a tiling unit consisting of more than one copy of the fundamental translational unit leading to a swatch which has more than one copy of the minimal swatch (the swatch inside the fundamental translational unit) combined through annulus sums. Therefore, one of the properties of an ideal link invariant for classifying stitch patterns of two-periodic weft-knitted textiles is that it is independent of the number of copies of the minimal swatch. Since such link invariants are rare in the existing literature, we instead study the algebra induced by the annulus sums on a link invariant to gauge and account for its dependency on the number of copies of minimal swatches.

Recall from \textbf{Remark}~\ref{remark:cyclic_permutation} that the decomposition of a compound swatch into a given set of smaller swatches is determined only up to cyclic permutations in the gluing order of their cut link complements. However, the annulus sums are \emph{non-commutative}. For instance, using the notation from section~\ref{subsec:partition}, the $4\times1$ swatch corresponding to the word $kkpp$ is equivalent to swatches denoted by the words $pkkp$, $ppkk$, $kppk$, but not equivalent to the one denoted by $kpkp$. This is because the word $kpkp$ is obtained by swapping the letters $k$ and $p$ in the second and third positions of the original word $kkpp$, which is not a cyclic transposition. Therefore, the gluing order under the annulus sums are cyclically permutable but non-commutative. Thus, in conclusion, we seek link invariants which are independent of the number of copies of the minimal swatches, non-commutative with respect to the gluing order, and unique only up to the cyclic permutations of the gluing order.

Based on our observation and numerical experiments, most of the link invariants do not show any direct correlation with the emergent elastic properties of the textiles. However, we can hypothesize as to which factors play a key role in determining the stiffness of two-periodic textile samples. A common property among textile samples that were experimented on numerically and in the lab by the authors of \cite{singal:2024p2622} is that, having swatches arising due to annulus sums that lead to alternating crossings (an under crossing followed by an over crossing or vice versa), while going from one irreducible swatch to the other tends to be less stiff. For example, this is observed in a comparative analysis done on stockinette, one-by-one rib, garter and seed stitches. The authors attribute this feature to the soft modes of deformation -- modes of deformation of yarn that cost less elastic energy. Thus, the order in which the motifs are put together in a stitch pattern is crucial in understanding the mechanics of textiles. 

In addition, the number of pairs of alternating crossings (that look like clasps) within an irreducible swatch is vital. This is because of the fact that, when stretched these parts of yarn come in contact and push on each other adding to the energy cost of deformation of yarn through bending and compression. These parts also cause energy dissipation due to friction between strands that are in contact. Therefore, a topological invariant that quantifies the number of pairs of alternating crossings while being able to distinguish between the clasp-type crossing motif from the generic alternate crossing motif in a reduced planar diagram, can prove useful in establishing a connection between a numerical invariant and the mechanics of textiles. We will be exploring this aspect of the problem in future.

The number of components of a swatch is a simple proxy for the number of irreducible swatches that generate it under annulus sums. For example, the minimal swatch corresponding to seed stitch pattern has two components and four irreducible swatches. In fact, the canonical decomposition of a swatch that has $n$ components consists of at most $2n$ irreducible swatches. For a swatch, the number of components is equal to the coefficient of the longitude in the homology type of the curves representing the swatch. This holds because, every non-contractible component of an unknit is homologous to the longitude of the base torus and a swatch with $n$ components is constructed from an unknit with $n$ non-contractible loops. Below, we briefly discuss link invariants that were computed for some swatches along with a summary of our observations and results.   
 
\begin{figure*}
\centering
\includegraphics[width = 140 mm]{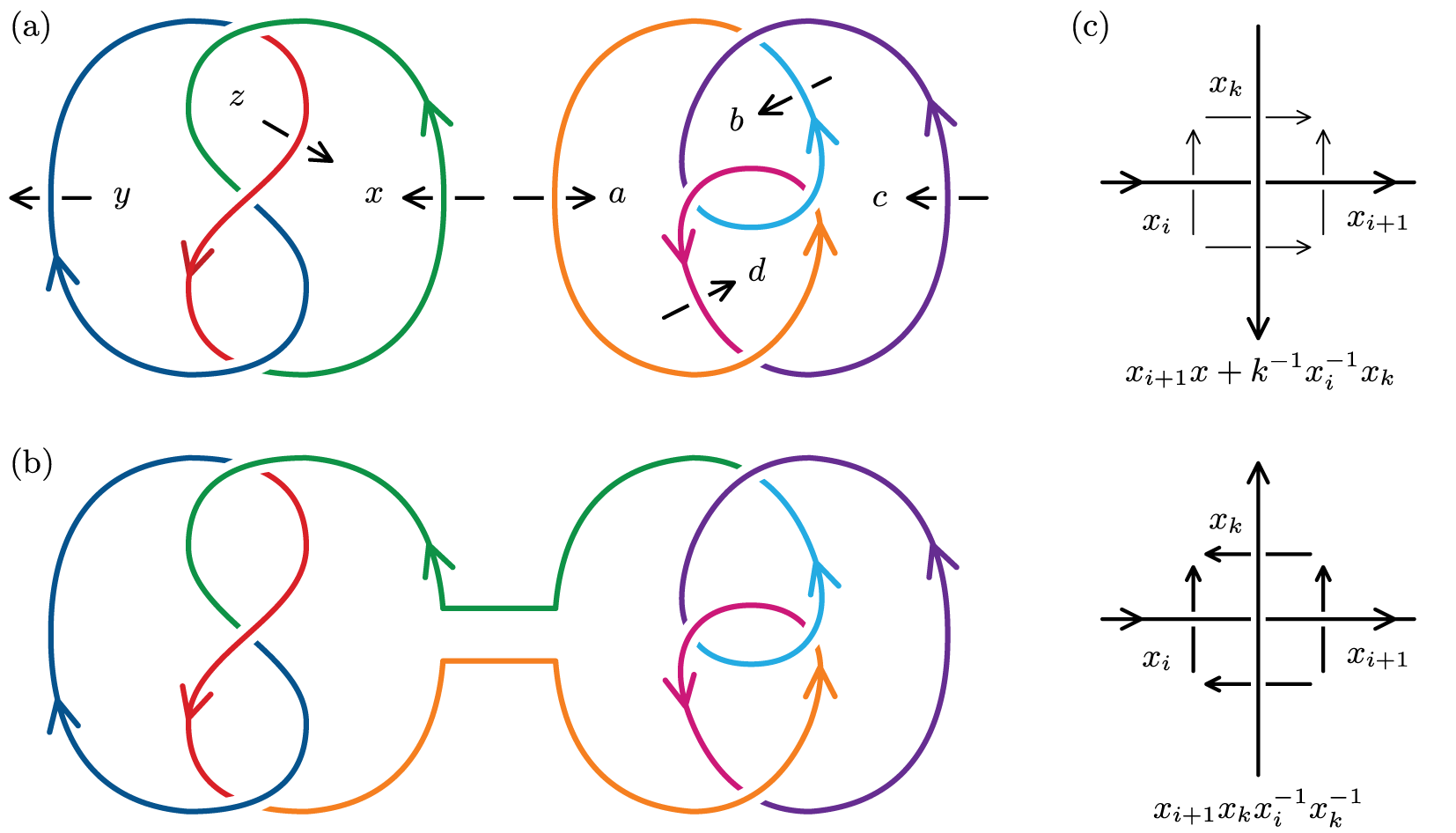}
\caption{\label{fig:Wirtinger}(a) A planar diagram of the right handed trefoil and the figure eight knot. (b) A planar diagram of the knot obtained by band surgery, where the band intersects the separating sphere in a simple arc. In both (a)  $\&$ (b), the colored arcs denote over-strands, and the labels are the letters constituting the alphabet of corresponding Wirtinger presentation. (c) A scheme for computing relations in the Wirtinger presentation.}
\end{figure*}

\subsection{Fundamental group of the link complement}
The link complement, which is introduced in \textbf{Definition}~\ref{definition:lc} is a 3-manifold link invariant, and as a result, the \emph{fundamental group of the link complement} denoted by $\pi_1$ is a link invariant. The fundamental group of a manifold is the set of all loops with a common base point in the manifold partitioned by the homotopy equivalence. The underlying group operation is the concatenation of based loops. The definition and a few examples are provided in section~\ref{appendix:link_group}.

Since the link complement of the Hopf link in $S^3$ is homeomorphic to $T^2\times I$, the link complement of an $(n+2)$-component link $H\cup f(L)\subset S^3$ is homeomorphic to the link complement of the $n$-component link $L\subset T^2\times I$. Recall that the link $f(L)\subset S^3$ is obtained by Dehn filling $L\subset T^2\times I$.
Therefore, any topological property of the link complement $S^3\setminus (H\cup f(L))$ is a topological invariant of $L\subset T^2\times I$ including its fundamental group. The fundamental group of a link complement admits a group presentation which is computed using the following lemma.

\begin{lemma}[Wirtinger presentation \cite{Rolfsen_1976}]\label{lemma:Wirtinger}
If $D(L)$ is an oriented link diagram of a link $L$ with arcs $x_1,x_2...,x_n$ and crossings $c_1,c_2...,c_n$, then
\begin{align}
& \pi_1(X_L)  \cong F[\{x_1,x_2,...,x_n\}]/N[\{r_1,r_2,...,r_{n-1}\}] = \nonumber\\
& \hspace{50pt}\langle x_1,x_2...,x_n|r_1,r_2...,r_{n-1}\rangle,
\end{align}
where each crossing $c_i$ gives a relation among the generators given by either $x_{i+1}x_k^{-1}x_i^{-1}x_k$ or $x_{i+1}x_kx_i^{-1}x_k^{-1}$ according to the relative configuration of the under strand and the over strand.
\end{lemma}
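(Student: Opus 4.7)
The plan is to use van Kampen's theorem applied to a carefully chosen open cover of the link complement $X_L = S^3 \setminus \nu(L)$, following the standard strategy that encodes crossings as conjugation relations. First, I would isotope $L$ so that, outside small neighborhoods of the crossings, it lies in a horizontal plane $P \subset \mathbb{R}^3 \subset S^3$, with over-strands lifted slightly above $P$ at each crossing. Choose a basepoint $p_0$ high above $P$. For each arc $x_i$ of the diagram, define a loop $\gamma_i$ in $X_L$ that runs from $p_0$ straight down to a point just above $x_i$, encircles the arc once as a positively oriented meridian (with sign determined by the right-hand rule and the orientation of $L$), and returns to $p_0$.

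Next, I would establish that these loops generate $\pi_1(X_L)$. The standard way is to decompose $X_L$ into $U =$ (everything in $X_L$ strictly above $P$, plus thin tubes around each over-strand) and $V =$ (everything in $X_L$ at or below $P$, together with small half-balls bridging over each crossing). The set $U$ deformation retracts onto a wedge of circles, one for each arc $x_i$ (because removing the arcs from a half-space leaves a space homotopy equivalent to such a wedge), so $\pi_1(U)$ is free on $\{x_1,\dots,x_n\}$. The set $V$ is contractible (it is a half-space with a set of disjoint, unknotted bridges removed, each bridge bounding a disk below). The intersection $U \cap V$ is a neighborhood of $P$ minus the arcs together with neighborhoods around the crossings, whose fundamental group is generated by loops around each crossing site.

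Then I would derive the crossing relations. At a crossing $c_i$ where arc $x_k$ passes over the under-arcs $x_i$ (incoming) and $x_{i+1}$ (outgoing), the contribution of $U \cap V$ to the van Kampen presentation is a loop encircling the entire crossing in $P$. In $U$, this loop is nullhomotopic, while in $V$ it can be expressed as a product of meridians of the three arcs meeting at $c_i$. Tracking the boundary carefully gives a relation of one of the two forms
\begin{equation}
x_{i+1} = x_k x_i x_k^{-1} \quad\text{or}\quad x_{i+1} = x_k^{-1} x_i x_k,
\end{equation}
depending on the sign of the crossing as dictated by the convention in Figure~\ref{fig:Wirtinger}(c). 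Each of the $n$ crossings contributes one such relation $r_i$.

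Finally, I would argue that exactly one of these $n$ relations is a consequence of the others, leaving $n-1$ independent relations. The cleanest argument is that in $H_1(X_L)$ all crossing relations reduce to the statement that the meridians around the two under-strands meeting at a crossing are equal; following a full component of $L$ around the diagram, this forces the product of the $n$ conjugation relations (read in cyclic order around an unbounded face of the diagram) to equal the identity in the free group modulo the others. Equivalently, the unbounded complementary region of the diagram imposes a global relation that the product of meridians around it is trivial; since that region has no independent generator in $U \cap V$, this redundancy is automatic. The main technical obstacle will be verifying this redundancy rigorously, together with pinning down the correct sign conventions so that the two cases of the relation match those stated; everything else is a direct application of van Kampen's theorem to the cover $(U,V)$.
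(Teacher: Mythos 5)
The paper gives no proof of this lemma at all: it is quoted as a standard result from \cite{Rolfsen_1976}, so the only meaningful comparison is with the classical argument in that reference. Your strategy --- van Kampen applied to an upper/lower decomposition of $X_L$, generators given by meridians of arcs, one conjugation relation per crossing, and one redundant relation coming from the unbounded face --- is exactly that classical argument, so the approach is right in outline.

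As written, however, your decomposition fails, because both inputs to van Kampen are false for the sets you actually define. If the over-strands are lifted \emph{above} $P$ and $U$ is everything in $X_L$ strictly above $P$, then $U$ is an open half-space minus only the over-bridges: it deformation retracts onto a wedge of one circle per \emph{over-bridge} (i.e.\ per crossing), not one per arc --- an arc that never passes over anything contributes no circle, an arc with several overpasses contributes several, and $U$ cannot see that two bridges on the same arc should give the same meridian, since the planar part of that arc is invisible to $U$. Moreover your loops $\gamma_i$ must encircle arcs lying in $P$ and therefore do not lie in $U$ at all. Likewise your $V$, a half-space with disjoint properly embedded unknotted arcs removed, is homotopy equivalent to a wedge of circles, not contractible. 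The standard fix is to reverse the convention: keep the diagram in $P$ except that the \emph{under}-strands dip slightly below $P$ at each crossing; take $U = X_L\cap\{z>-\epsilon\}$, which deformation retracts onto the complement of the $n$ arcs and is free on their meridians; take $V$ to be the region below together with boxes around the dips; and read off one relation of the form $x_{i+1}x_k^{\mp1}x_i^{-1}x_k^{\pm1}$ from the annular piece of $U\cap V$ surrounding each dip. Finally, your redundancy argument needs repair: passing to $H_1$ proves nothing, since every Wirtinger relator abelianizes to a triviality. The correct argument is the one you gesture at in your last sentence --- the relator attached to any single chosen crossing is a consequence of the others because the corresponding product of meridians bounds in the unbounded region of the diagram (push the loop off to infinity past the rest of the diagram) --- and that is the step to make precise.
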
  
\noindent Here, $F[X]$ denotes the free group generated by the set $X=\{x_1,\cdots,x_n\}$ with $X\cup X^{-1} = \{x_1,\cdots,x_n,x_1^{-1},\cdots,x_n^{-1}\}$ as the alphabet. The generating set given by $X$ consists of letters denoting the over-strands. As shown in Figure~\ref{fig:Wirtinger}(c), each element in the set of relations denoted as $\{r_1,\cdots,r_{n-1}\}$ is obtained by following the arcs at a crossing in the counter clockwise direction, and $N[\{r_1,r_2,\cdots,r_{n-1}\}]$ denotes the smallest normal subgroup of the free group $F[X]$, containing the set of relations $\{r_1,r_2,\cdots,r_{n-1}\}$. These concepts are discussed in a bit more detail in section~\ref{appendix:group_presentations}. 

The Wirtinger presentations based on the planar diagrams of the right handed trefoil knot, the figure eight knot and their connect sum, which are shown in Figure~\ref{fig:Wirtinger}(a)-(b) are given as follows:

\begin{align}
 \pi_1(S^3\setminus T) & \cong F[\{x,y,z\}]/N[\{yx^{-1}zx, xz^{-1}y^{-1}z\}] = \langle x,y,z | yx^{-1}zx, xz^{-1}y^{-1}z\rangle\nonumber\\
 \pi_1(S^3\setminus E) &\cong F[\{a,b,c\}]/N[\{ bc^{-1}ac, da^{-1}c^{-1}a,  cd^{-1}b^{-1}d\}] \nonumber\\
& = \langle a,b,c,d | bc^{-1}ac, da^{-1}c^{-1}a,  cd^{-1}b^{-1}d \rangle\nonumber\\
 \pi_1(S^3\setminus (T\#E)) & \cong \langle x,y,z,a,b,c,d | yx^{-1}zx, xz^{-1}y^{-1}z, bc^{-1}ac, da^{-1}c^{-1}a,  cd^{-1}b^{-1}d, xa^{-1}\rangle
\end{align}

The generators of the presentation of $\pi_1(S^3\setminus(T\#E))$ is the union of the generators of the Wirtinger presentations of $\pi_1(S^3\setminus T)$ and $\pi_1(S^3\setminus E)$. Further, the set of relations is also given by the union, but with one extra relation resulting from the band surgery or the connect-sum. Similarly, we can compute the Wirtinger presentations for the fundamental groups of the link complements of links resulting from adding the link complements of a pair of swatches under the annulus sums. Although computing Wirtinger presentations of complicated non-alternating links with higher minimal crossing number can be quite tedious by hand, it is readily done in \texttt{SnapPy} \cite{SnapPy} in the \texttt{sagemath} \cite{sagemath} environment by command $.knot\_group()$. Below, we demonstrate how to compute presentations of $\pi_1(S^3\setminus H\cup f(k*_mp))$ and $\pi_1(S^3\setminus H\cup f(k*_lp))$ using Wirtinger presentations of $\pi_1(S^3\setminus H\cup f(k))$ and $\pi_1(S^3\setminus H\cup f(p))$.

\begin{figure*}
\centering
\includegraphics[width = 160mm]{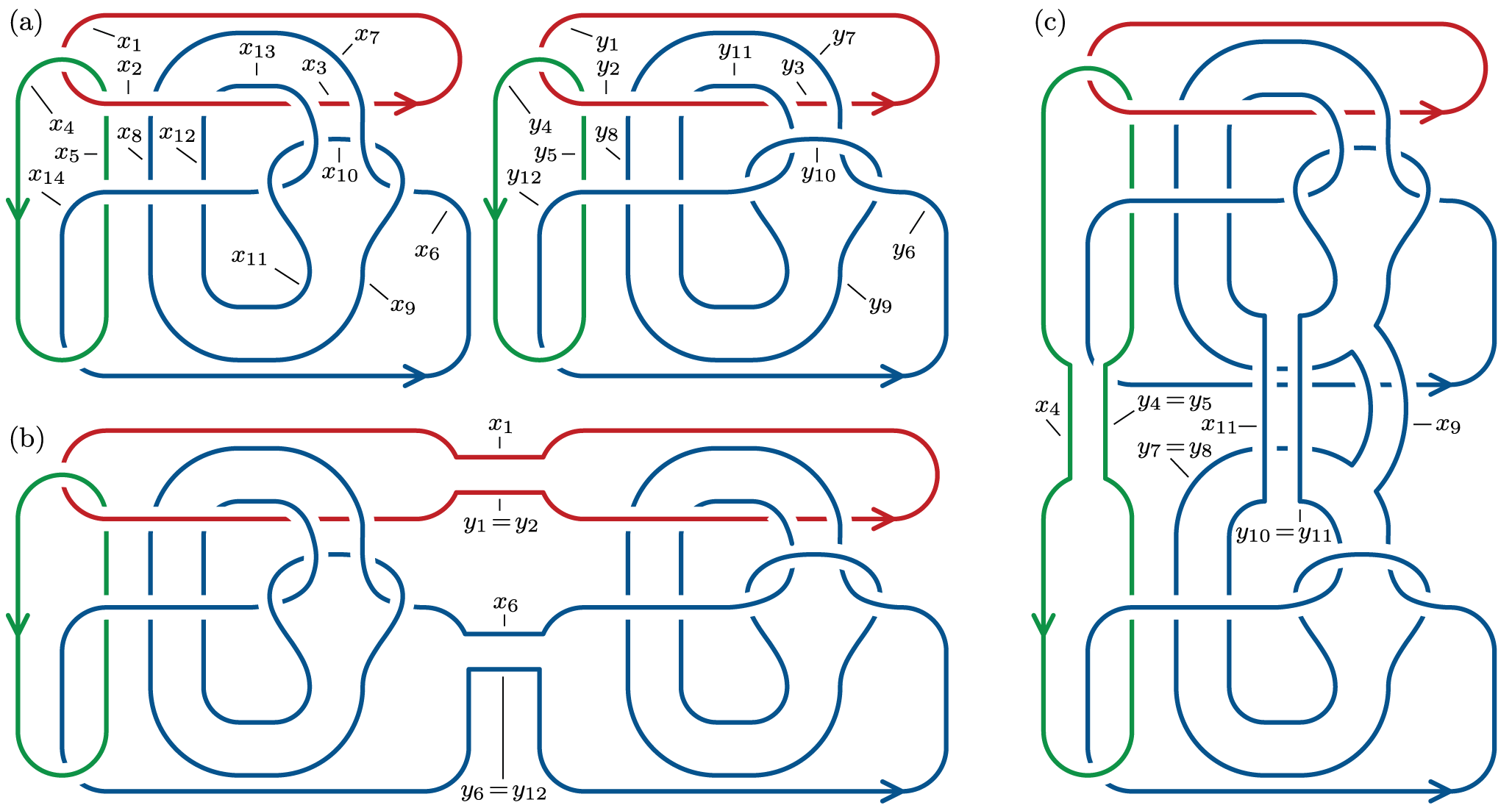}
\caption{\label{fig:bandsum_Wirtinger}The Wirtinger presentations of the fundamental groups of the links corresponding to the $1\times1$ rib and the garter swatches. (a) The link in a 3-sphere corresponding to the knit swatch and the purl swatch. (b) The link corresponding to the $1\times1$ rib swatch. (c) The link corresponding to the garter swatch. The over-strands are labelled and the relations resulting from the band surgery are provided adjacent to the bands.}
\end{figure*}
 
By labelling arcs in the link diagram using the scheme of assigning distinct labels to different over-strands, we get the following Wirtinger presentations of the links corresponding to the knit and the purl swatch:
\begin{align*}
&\pi_1(S^3\setminus H\cup f(k))\cong F[X]/N[R_k]\\
&\pi_1(S^3\setminus H\cup f(p))\cong F[Y]/N[R_p]\\
&\pi_1(S^3\setminus H\cup f(k*_mp))\cong F[X\cup Y]/N[R_r],
\end{align*}
where $X  = \bigcup_{i=1}^{n}\{x_i\}$, $Y = \bigcup_{j=1}^{m}\{y_j\}$ and $R_r = R_k\cup R_p\cup\{y_4,y_5,x_1y_1^{-1}x_6y_6^{-1}\}$. The relations $y_4=1$, $y_5=1$ combined with the relations associated with the crossings involving the green component of the link $H\cup f(p)\subset S^3$ imply that, $y_1=y_2$ and $y_6 = y_{12}$. 

For the presentation of the fundamental group of the link complement $S^3\setminus (H\cup f(k*_lp))$ given as $F[X\cup Y]/N[R_g]$, the set of relations is $R_g = R_k\cup R_p\cup\{y_1,y_2,y_3,x_4y_4^{-1}x_{11}y_{10}^{-1}x_9y_7^{-1}\}$, where $y_1=y_2=y_3=1$ implies that $y_4=y_5$, $y_7=y_8$ and $y_{10}=y_{11}$ due to deletion of the red component of the link $H\cup f(p)$.

The abelianzation of the fundamental group of a manifold yields the first homology group of the manifold. Thus, the first homology group denoted by $H_1(S^3\setminus H\cup f(k*_mp))$ is isomorphic to the abelianization of the presentation, $F[X\cup Y]/N[R_r]$. The first homology group is isomorphic to the Abelian group $\mathbb{Z}\oplus\mathbb{Z}\oplus\mathbb{Z}$, and the generators correspond to the meridians of the boundary tori of the link complement.

Given a swatch $L\subset T^2\times I$, the linking number lk$(f(L),f(l)))$ vanishes, but the sublink $f(L)\cup f(l)$ is not split. This non-split property of the sublink $f(L)\cup f(l)\subset S^3$ can be detected by looking at the presentations of the knots $f(L)$ and $f(l)$ in the Wirtinger presentations of the groups $\pi_1(S^3\setminus f(l)))$ and $\pi_1(S^3\setminus f(L))$ respectively. If $f(L)\cup f(l)\subset S^3$ is split, then $f(L)\subset\pi_1(S^3\setminus f(l))$ and $f(l)\subset\pi_1(S^3\setminus f(L))$ are both contractible or homotopic to a point (null-homotopic). If we analyze the link diagrams corresponding to the knit and the purl swatches shown in \ref{fig:bandsum_Wirtinger}(a), then we observe that in both of these cases, while the knot $f(L)\subset\pi_1(S^3\setminus f(l))$ is null-homotopic, the loop $f(l)\subset\pi_1(S^3\setminus f(L))$ is not contractible. This implies that the sublink $f(L)\cup f(l)\subset S^3$ is not split. 

As with any link invariant, showing fundamental groups of two links are not isomorphic implies the links are not ambient isotopic to each other, but in general proving whether or not two groups are isomorphic is rather difficult. However, when the swatches give rise to hyperbolic links, the geometry of the interior of the link complement is completely determined by its fundamental group \cite{Mostow_73, Prasad_73}. This implies that the geometric invariants are proxies for the knot groups of the link complements of the hyperbolic links, meaning if the link complements are isometric then the knot groups are isomorphic. And if any of the geometric invariants do not agree then the links are not ambient isotopic in $S^3$. Therefore, next we discuss three geometric invariants derived from the hyperbolic structure of the link complements of hyperbolic links in $S^3$.  
 
\subsection{Invariants derived from the hyperbolicity of swatches} 
We say a swatch $L\subset T^2\times I$ is \emph{hyperbolic} if the link $H\cup f(L)\subset S^3$ is hyperbolic. For a 3-manifold, admitting a hyperbolic structure implies existence of a triangulation by ideal tetrahedra glued together \emph{consistently}; a tetrahedron is ideal if its vertices are on the boundary sphere at infinity in the Poincar\'{e} disk model of $\mathbb{H}^3$. Here, the conditions of consistency in gluing yield a finite number of nonlinear equations in an equal number of unknowns, which characterize the ideal tetrahedra. These equations are called the \emph{gluing equations}, and the solutions to the gluing equations describe a hyperbolic 3-manifold. Using the \emph{upper half-space model} of hyperbolic 3-space $\mathbb{H}^3$, where it is parametrized as the space $\mathbb{C}^2\times(0,\infty)$, the solution to the gluing equations can be expressed as a finite set of complex numbers describing the shapes of ideal tetrahedra in the triangulation of the underlying hyperbolic manifold \cite{Purcell_2020}.

\begin{conjecture}[Hyperbolic swatches]
\label{conjecture:hyp}
An (n+2)-component link $H\cup f(L)\subset S^3$ corresponding to an $m\times n$ swatch $L\subset T^2\times I$ that is reducible in terms of knit swatches and purl swatches is hyperbolic.
\end{conjecture}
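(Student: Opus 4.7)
The plan is to induct on the number of knit and purl swatches summed to build $L$, applying Thurston's hyperbolization theorem at each step. Recall that a compact orientable 3-manifold with nonempty toroidal boundary has complete finite-volume hyperbolic interior if and only if it is irreducible, atoroidal, anannular (with respect to the boundary), and not Seifert fibered. The base case, hyperbolicity of $H\cup f(k)$ and $H\cup f(p)$, is a finite check that can be carried out using certified ideal triangulations via \texttt{SnapPy}'s verify routines, or by recognizing these small links as members of known hyperbolic families.

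For the inductive step, suppose $L = L_1 *_m L_2$ (the longitudinal case is parallel) with both $X_{L_i} := S^3\setminus\nu(H\cup f(L_i))$ already known to be hyperbolic. Then $X_L$ is assembled by cutting each $X_{L_i}$ along its meridional punctured annulus $A_i$ and regluing to form a punctured annulus $A\subset X_L$. Irreducibility transfers: any essential $2$-sphere in $X_L$ can be isotoped to minimize its intersection with $A$, and incompressibility of $A$ in each piece forces the sphere into a single piece, contradicting irreducibility there. A Seifert fibered structure on $X_L$ would likewise restrict to Seifert structures on both $X_{L_i}$, contradicting their hyperbolicity.

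The main obstacle, and the bulk of the work, lies in ruling out essential tori and essential annuli in $X_L$: the gluing surface $A$ is itself a candidate essential annulus, and essential tori can in principle be assembled from essential annuli in $X_{L_1}$ and $X_{L_2}$ meeting along arcs and circles in $A$. The strategy I would pursue is the standard intersection-minimization: for any candidate essential surface $\Sigma\subset X_L$, isotope to minimize $|\Sigma\cap A|$; each resulting component of $\Sigma\cap X_{L_i}$ must be essential with boundary on $A_i\cup\partial X_{L_i}$, but anannularity of the hyperbolic pieces forbids such essential annuli, so $\Sigma\cap A=\emptyset$ and $\Sigma$ lies entirely in one piece, contradicting its hyperbolicity. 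The hard step is handling the gluing annulus $A$ itself and essential annuli with boundary on the Hopf-link cusps: one would likely produce an explicit compressing disk for $A$ built from the knitting-position bands used to construct knits and purls, together with a combinatorial classification of how essential cusp-annuli could traverse $A$.

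The restriction in the conjecture to annulus sums of knits and purls, rather than arbitrary irreducible swatches, is essential: Remark~\ref{remark:non_hyp} exhibits layered swatches whose link complements contain essential splitting tori and are therefore non-hyperbolic, even though they arise from annulus sums of irreducible pieces. Consequently, any proof must exploit features of the knit and purl link complements that fail in general, for instance an explicit tetrahedral structure or an obstruction, visible in the Wirtinger presentation, to the existence of a nonperipheral incompressible torus after gluing. Identifying the precise structural property of $k$ and $p$ responsible for this rigidity is, I expect, where the conjecture will stand or fall.
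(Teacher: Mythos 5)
The statement you are addressing is presented in the paper only as a conjecture: the authors support it with \texttt{SnapPy} computations on link complements and offer no proof, so there is no argument of theirs to compare yours against. Your outline is a sensible way to attack it, and you are candid about where it is incomplete, but as written it is a research plan rather than a proof. The concrete gaps are these. First, you never establish that the punctured annulus $A_i$ is incompressible and boundary-incompressible in each piece $X_{L_i}$, which both your irreducibility argument and your intersection-minimization argument presuppose. Second, the central step fails as stated: hyperbolicity of $X_{L_i}$ only excludes essential annuli with boundary on the cusps of $X_{L_i}$; it says nothing about essential annuli in the cut-open piece with boundary on the copies of $A_i$, and it is precisely such pieces that can reassemble into an essential torus or annulus of $X_L$ after regluing --- this is exactly how JSJ tori arise when hyperbolic pieces are glued along surfaces. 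Excluding them requires an acylindricity (no accidental parabolics) statement about the pared manifolds $(X_{L_i},A_i)$ that you have not formulated, let alone proved; you concede as much when you say the structural property of $k$ and $p$ responsible for the rigidity remains to be identified. Third, the Seifert-fibered case is asserted rather than argued: a Seifert structure on $X_L$ does not obviously restrict to the pieces cut along a punctured annulus unless that surface is first isotoped to be vertical or horizontal.

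One observation in your favor that you could exploit: for $n=1$ the gluing surface is an annulus with a single puncture, i.e.\ a thrice-punctured sphere, and by Adams's theorem an essential thrice-punctured sphere in a hyperbolic $3$-manifold is totally geodesic. This gives a much cleaner route to the inductive step in the one-component case (cut and reglue along totally geodesic surfaces) and simultaneously explains the paper's observed exact additivity of hyperbolic volume for $m\times 1$ swatches (Conjecture~\ref{conjecture:vol_sum}) and its failure for $n\ge 2$, where the gluing surface has more punctures and need not be geodesic. Pursuing that line, together with a certified computation for the base cases $H\cup f(k)$ and $H\cup f(p)$ as you propose, would likely settle the $n=1$ case of the conjecture; the general case still requires the unproven acylindricity input.
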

 
\subsubsection{Hyperbolic volume (vol)}
The hyperbolic volume of a complete finite-volume hyperbolic 3-manifold is simply the sum of the volumes of the ideal tetrahedra, which are calculated using the hyperbolic metric defined by the solution to the underlying set of gluing equations. Hyperbolic volume is a link invariant due to the \emph{Mostow Rigidity theorem}, according to which an isomorphism between the fundamental groups of a pair of complete finite-volume hyperbolic 3-manifolds induces a unique isometry \cite{Mostow_73, Prasad_73}. On the basis of our computation of hyperbolic volumes in \texttt{SnapPy}, we observe that the induced algebra due to the meridional annulus sum is additive for the three component links, or equivalently for $m\times1$ swatches. As a result, we propose the following conjecture:

\begin{conjecture}[Hyperbolic volume of one-component hyperbolic swatches]\label{conjecture:vol_sum}
Let $H\cup f(L_1)\subset S^3$ and $H\cup f(L_2)\subset S^3$ be the three-component hyperbolic links corresponding to swatches $L_1\subset T^2\times I$ and $L_2\subset T^2\times I$. If the link $H\cup f(L_1*_mL_2)\subset S^3$ corresponding to the swatch $L_1*_mL_2\subset T^2\times I$ is hyperbolic, then its hyperbolic volume is equal to the sum of the hyperbolic volumes of links $H\cup f(L_1)\subset S^3$ and $H\cup f(L_2)\subset S^3$.
\end{conjecture}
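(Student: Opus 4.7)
My approach is to realize the meridional annulus sum as a geometric cut-and-paste along essential thrice-punctured spheres, and then invoke the rigidity of such surfaces in hyperbolic $3$-manifolds.

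First I would identify the precise surface being cut. In $T^2 \times I$ the meridional cross-section is an annulus $A$ whose two boundary circles sit on the two components of the Hopf link $H$ in the $S^3$ picture. Since each swatch $L_i$ has a single component homologous to the longitude, the algebraic intersection $L_i \cdot A = \pm 1$, so after an isotopy putting $L_i$ in minimal position with respect to $A$ the geometric intersection is exactly one point. Removing an open tubular neighborhood of $H \cup f(L_i)$ turns $A$ into a properly embedded thrice-punctured sphere $P_i \subset X_i$, where $X_i := S^3 \setminus \nu(H \cup f(L_i))$: two punctures run out the cusps from $H$ and one runs out the cusp at $f(L_i)$. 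Cutting $X_1$ and $X_2$ along $P_1$ and $P_2$ and regluing by the identification specified in Definition~\ref{def:gluing} produces exactly the link complement of $H \cup f(L_1 *_m L_2)$.

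Next I would verify that each $P_i$ is essential (incompressible and boundary-incompressible) in $X_i$. Incompressibility follows because the peripheral loops of $P_i$ represent meridians of the Hopf components and inject into $\pi_1(X_i)$; boundary-incompressibility reduces to a standard peripheral check at each cusp. Essentiality lets me invoke Adams' theorem: in any finite-volume hyperbolic $3$-manifold, every essential thrice-punctured sphere is isotopic to a unique totally geodesic representative. Because all complete hyperbolic thrice-punctured spheres are pairwise isometric via their natural puncture-preserving isometry, the two resulting geodesic surfaces $P_1$ and $P_2$ carry a canonical identification.

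Finally I would assemble the hyperbolic structures. Cutting each $X_i$ along its totally geodesic $P_i$ yields a hyperbolic $3$-manifold with totally geodesic boundary whose volume equals $\mathrm{vol}(X_i)$. Gluing the two resulting pieces along their boundaries by the above isometry (composed with the peripheral data of $*_m$) produces a finite-volume hyperbolic $3$-manifold of volume $\mathrm{vol}(X_1) + \mathrm{vol}(X_2)$. The hypothesis that $H \cup f(L_1 *_m L_2)$ is hyperbolic, combined with Mostow--Prasad rigidity, identifies this glued metric with the unique complete hyperbolic metric on $S^3 \setminus \nu(H \cup f(L_1 *_m L_2))$, giving the equality of volumes.

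The main obstacle I anticipate is the verification of essentiality of $P_i$ for arbitrary hyperbolic one-component swatches. For knit and purl $P_i$ is clearly essential, but for compound swatches built by earlier annulus sums the single link component may traverse the meridional annulus in a complicated way, and one must rule out compressing and $\partial$-compressing disks before passing to minimal position. A secondary subtlety is matching the natural puncture-labeling of the two $P_i$'s (the ``top Hopf cusp,'' ``bottom Hopf cusp,'' and ``swatch cusp'') with the topological gluing specified by $*_m$, so as to exclude an unwanted twist by a nontrivial element of the mapping class group of the thrice-punctured sphere; the peripheral framing inherited from the $T^2 \times I$ structure should fix this identification uniquely.
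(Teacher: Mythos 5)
The paper does not prove Conjecture~\ref{conjecture:vol_sum}: it is stated as a conjecture supported only by \texttt{SnapPy} computations (e.g.\ the tabulated equality $\mathrm{vol}(kp)=\mathrm{vol}(kk)=\mathrm{vol}(pp)=2\,\mathrm{vol}(k)$), so there is no argument in the paper to compare yours against. Your strategy --- realizing the meridional annulus sum as cut-and-paste along thrice-punctured spheres and invoking Adams' theorem that an essential thrice-punctured sphere in a finite-volume hyperbolic $3$-manifold is isotopic to a totally geodesic one, together with the rigidity of the complete hyperbolic structure on the thrice-punctured sphere --- is the standard and, in outline, correct route; it is essentially Adams' belted-sum argument. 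If completed it would prove strictly more than the conjecture (the hyperbolicity of $H\cup f(L_1*_mL_2)$ would follow rather than need to be assumed), and it would also explain the paper's empirical observation that additivity fails for $m\times n$ swatches with $n\geq 2$: there the cutting surface is an $(n+2)$-punctured sphere, which is not rigid.

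Two points need repair. First, your reduction from algebraic to geometric intersection number one is not justified: $L_i\cdot A=\pm1$ does not imply that minimal position realizes a single intersection point. But this step is also unnecessary --- Definition~\ref{def:gluing} performs the sum along the specific ``green punctured annulus'' furnished by the swatch construction, and by Definitions~\ref{def:knitting} and~\ref{def:swatch} the isotopies and bands avoid the meridional edge of the fundamental domain, so that annulus meets the single component of an $m\times 1$ swatch in exactly one point by construction; you should work with that surface directly rather than appeal to minimal position. Second, essentiality, which you correctly flag as the main obstacle, is within reach: every simple closed curve on a pair of pants is either inessential or parallel to one of the three boundary circles, and those three peripheral slopes (a meridian of $f(L_i)$ and the two boundary slopes of the vertical annulus on the Hopf cusps) are each nontrivial in $H_1$ of the link complement, so no compressing disk can exist; what remains is to rule out the boundary-parallel case in Adams' dichotomy, which here is straightforward since $X_i$ is not an interval bundle over a pair of pants. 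Your final worry about a mapping-class ambiguity in the regluing is harmless for the volume statement, because every self-homeomorphism of the thrice-punctured sphere is isotopic to an isometry of its unique complete hyperbolic structure; it matters only for identifying the glued manifold with $S^3\setminus\nu(H\cup f(L_1*_mL_2))$, which is a purely topological bookkeeping step.
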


The hyperbolic volumes for several swatches computed in \texttt{SnapPy} are listed in tables \ref{tab:invariants0}, \ref{tab:invariants1} and \ref{tab:invariants2}. From this data, we conclude that the volume is not additive with respect to the meridional annulus sums for $m\times n$ swatches for $n\geq2$. For instance, the hyperbolic volume of the four component link corresponding to the seed swatch shown in Figure~\ref{fig:grid}(b), is not equal to the sum of hyperbolic volumes of the four component links corresponding to the swatches denoted by $k*_lp$ and $p*_lk$.

The hyperbolic volume is independent of the order in which the meridional annulus sum is performed. For example, \texttt{SnapPy} yields same values for the hyperbolic volume of $4\times1$ swatches denoted by $kkpp$ and $kpkp$. These swatches are not related by cyclic permutation of the order in which the cut link complements of pairs of knit and purl swatches are glued. Therefore, hyperbolic volume is equal for links corresponding to compound swatches related by cyclic permutations in the gluing order and, it is commutative with respect to meridional annulus sum. 

Hyperbolic volume depends on the number of copies of the minimal swatch in a tiling unit of the stitch pattern of a two-periodic weft-knitted textile. It is not yet understood how the hyperbolic volume changes with respect to the number of copies of the minimal swatch, except that the sum of hyperbolic volumes of the swatches that make up a bigger swatch under annulus sums sets a lower bound.

\subsubsection{The fundamental translational units of boundary horospheres: the cusp shapes}
The universal cover of the interior of the link complement of a hyperbolic link is $\mathbb{H}^3$, and therefore, the boundary tori in the link complement tessellate the horospheres \cite{Purcell_2020}. Horospheres are the surfaces (in 3D) and the curves (in 2D) that intersect the geodesics emanating from a point at infinity orthogonally. Thus, the point at infinity through which the geodesics emerge from is the only common point for all of the horospheres that are perpendicular to the geodesics. 
They are isometric to euclidean planes $\mathbb{R}^2$, which are tiled by parallelogram shaped fundamental domains. Since the horospheres corresponding to each boundary torus form a continuous family, the corresponding tiles can be assigned a \emph{shape parameter}, which is equal to the ratio of the longitude and the meridian of the torus corresponding to the edges of the tile. The shape parameter has the same value for any horosphere in the continuous family because the tiles are \emph{similar}, and thus, an ordered collection of shape parameters of the boundary tori with the same ordering as the ordered link is a link invariant. This link invariant is called the \emph{cusp shapes}. We compute cusp shapes of hyperbolic links using \texttt{SnapPy} \cite{SnapPy}. Qualitatively, with regard to our criteria for an ideal invariant for describing stitch patterns of two-periodic weft-knitted textile, cusp shapes behaves similar to the hyperbolic volume except we do not have a conjecture about the induced algebraic structure on a closed subset of $m\times1$ swatches, as in \textbf{Conjecture}~\ref{conjecture:vol_sum} for the hyperbolic volume.

\subsubsection{Trace fields $\&$ Invariant trace fields}
An element in the fundamental group of the link complement of a hyperbolic link corresponds to an isometry of its universal cover $\mathbb{H}^3$. 
The isometries of $\mathbb{H}^3$ can be parametrized by elements of $PSL(2,\mathbb{C})$. 
Consequently, the fundamental group of the link complement of a hyperbolic link has a representation in terms of elements in $PSL(2,\mathbb{C})$ \cite{Purcell_2020}.
This representation can be lifted to a representation into $SL(2,\mathbb{C})$.
The number field obtained by extending the field of rational numbers by the traces of the matrices in the image of this representation is a topological invariant \cite{Maclachlan_2003, Coulson_2000}. This field is known as the \emph{trace field}.

We are interested in a link invariant which is independent of the quotient map so that the swatches denoted as $k$, $kk$ or an $m\times1$ swatch obtained by the meridional annulus sum of $m$ knit swatches admit the same value. Such an invariant is called a \emph{commensurability} invariant. It turns out that the number field resulting by extension of the field of rational numbers by the traces of the squares of the matrices from $\tilde{\Gamma}$ is a commensurability invariant \cite{Maclachlan_2003, Coulson_2000}, which is called the \emph{invariant trace field}. Invariant trace field is of great interest to us since it is the only invariant, known to us so far, which satisfies all the properties required to be a good invariant of two-periodic knits. However, \texttt{SnapPy}'s computation of invariant trace field is only an approximation, and we have not been able to verify in our analysis that the swatches $k$ and $kk$ evaluate to the same value of invariant trace field. Working towards being able to accurately compute invariant trace field to desired degree of precision is a crucial step towards finding an ideal invariant of stitch patterns of two-periodic weft-knitted textiles.  
  
\subsection{Multivariable Alexander polynomial ($\textrm{MVA}_L$)}\label{sec:mva}
Multivariable Alexander polynomial is a multivariable Laurent polynomial invariant of links in $S^3$ as ordered sets. Each component of the link is associated with a variable, and this correspondence need not be one to one. R. H. Fox devised an algorithm for calculating \emph{Alexander matrices} \cite{Alexander_1928} using the Wirtinger presentations of the fundamental groups of the link complements. Based on this algorithm and Fox calculus \cite{Fox_62}, he then generalized the Alexander polynomial (a Laurent polynomial in single variable) of links in $S^3$ to the multivariable Alexander polynomial. We use \texttt{SnapPy} \cite{SnapPy} and \texttt{KnotTheory} \cite{KTP} -- a \texttt{Mathematica} package -- to compute these polynomials. Based on our data we propose the following conjecture: 
\begin{conjecture}[MVA of links corresponding to swatches]
Let $L_1\subset T^2\times I$ and $L_2\subset T^2\times I$ be an $m_1\times n$ swatch and an $m_2\times n$ swatch respectively. Let $H\cup f(L_1)\subset S^3$ and $H\cup f(L_2)\subset S^3$ be the corresponding (n+2)-component links. The multivariable Alexander polynomial of the (n+2)-component link $H\cup f(L_1*_mL_2)\subset S^3$ is given by the product of multivariable Alexander polynomials of $H\cup f(L_1)\subset S^3$ and $H\cup f(L_2)\subset S^3$ divided by the factor $(t_1-1)^n$. The variable $t_1$ is associated with the component $m\subset H$ of the Hopf link and $m_1,m_2,n\in\{1,2,\cdots\}$.
\label{conjecture:mva}
\end{conjecture}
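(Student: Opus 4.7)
The plan is to derive the product formula via a Mayer--Vietoris argument on the maximal free abelian covers of the link complements, combined with the multiplicativity of Reidemeister torsion under gluing. The meridional annulus sum of Definition~\ref{def:gluing} realizes the link complement $X = S^3\setminus\nu(H\cup f(L_1*_mL_2))$ as the union $X_1\cup_{A_n} X_2$, where $X_i = S^3\setminus\nu(H\cup f(L_i))$ and the gluing surface $A_n$ is a meridional annulus of the base torus of $T^2\times I$ punctured by the $n$ strands of each swatch. Passing to the maximal abelian covers produces a short exact sequence of twisted cellular chain complexes
\begin{equation*}
0 \to C_*(\widetilde{A_n}) \to C_*(\widetilde{X_1}) \oplus C_*(\widetilde{X_2}) \to C_*(\widetilde{X}) \to 0
\end{equation*}
over $\Lambda = \mathbb{Z}[t_0^{\pm 1}, t_1^{\pm 1}, \ldots, t_{n+1}^{\pm 1}]$, where $t_0, t_1$ are the variables for the Hopf link components $f(l), f(m)$ and $t_2, \ldots, t_{n+1}$ are the variables for the $n$ swatch components.

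Multiplicativity of Reidemeister torsion over this sequence then yields $\tau(X) \equiv \tau(X_1)\tau(X_2)/\tau(A_n)$ up to units in $\Lambda$. To translate this into a statement about multivariable Alexander polynomials, one applies the standard conversion between $\tau$ and $\Delta$ for link complements in $S^3$, which introduces explicit factors of the form $(t_i-1)$ depending on the number of components and on which variables appear nontrivially. The remaining computation is that of $\tau(A_n)$: since $A_n$ deformation retracts onto a wedge of $n+1$ circles, its twisted chain complex over $\Lambda$ has the form $0 \to \Lambda^{n+1} \xrightarrow{\partial} \Lambda \to 0$, with $\partial$ determined by the images in $H_1(X)$ of the core and the puncture meridians of $A_n$. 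These images are $t_1$ (for the core, which after Dehn filling is a meridian of $f(m)$) and $t_2, \ldots, t_{n+1}$ (for the puncture meridians, which bound disks transverse to the respective swatch strands).

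The main obstacle will be pinning down the correction factor as exactly $(t_1-1)^n$ rather than a mixed product of the $(t_{i+1}-1)$. The naive torsion of $A_n$ couples all of the variables, and it is the combination of inclusion-induced identifications---where each $(t_{i+1}-1)$ from a puncture meridian is absorbed into the corresponding swatch strand's contribution inside $\Delta_{X_i}$---together with the shift factors between $\tau$ and $\Delta$ that localizes the surviving correction at the variable $t_1$. Making this rigorous requires proving that the Alexander modules in question are $\Lambda$-torsion, which should follow from the ribbon property (Theorem~\ref{theorem:ribbon}) and the non-splitness of $f(L)\cup f(l)$ recorded as property (4) in the list of properties at the start of Section~3. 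As a sanity check that will also fix the unit and sign ambiguities implicit in the conjecture's statement, I would verify the base case $L_1 = k$, $L_2 = p$: the formula should reproduce the multivariable Alexander polynomial of the $1\times 1$ rib swatch recorded in Table~\ref{tab:invariants0} directly from those of the knit and purl swatches.
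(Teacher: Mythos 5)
You should know at the outset that the paper does not prove Conjecture~\ref{conjecture:mva}: it is stated as a conjecture, supported only by \texttt{SnapPy}/\texttt{KnotTheory} computations (the tables, the $(pp)*_l(kp)$ example, and the $kkpp$ versus $kpkp$ computation), plus the link-determinant consequence in equation~\ref{eqn:link_det}. So there is no paper proof to compare against; your proposal must stand on its own. As a strategy it is reasonable and in the spirit of Morton--Grishanov \cite{MORTON_2009}, and your sanity check against Table~\ref{tab:invariants0} is the right instinct (the tabulated values for $k$, $p$, $kp$ do satisfy the formula up to units). But as written the argument has gaps that are not merely technical.

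First, the decomposition is wrong. The meridional annulus sum of Definition~\ref{def:gluing} cuts \emph{both} complements open along their green punctured annuli and reglues the resulting four annular boundary pieces in pairs, cyclically; the resulting $X$ is therefore a union of the two \emph{cut-open} manifolds along \emph{two} disjoint punctured annuli, not $X_1\cup_{A_n}X_2$ along one. Your short exact sequence does not describe this gluing, and if you repair it (e.g., via a Wang-type sequence for each self-gluing plus a Mayer--Vietoris for the cross-gluing), the torsions that appear are those of the cut-open pieces --- tangle-type invariants --- and relating those back to $\Delta_{H\cup f(L_i)}$ is precisely the content of the conjecture, so the argument as sketched is circular at its key step. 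Second, $\widetilde{A_n}$ is not acyclic: the twisted chain complex $0\to\Lambda^{n+1}\to\Lambda\to 0$ you write down has $H_1$ of rank $n$, so the naive multiplicativity $\tau(X)\doteq\tau(X_1)\tau(X_2)/\tau(A_n)$ is unavailable; one must carry the torsion of the homology long exact sequence, and that correction term is exactly where $(t_1-1)^n$ has to come from --- the step you defer as ``the main obstacle'' is the theorem. Relatedly, since every link here has $n+2\geq 3$ components, Turaev's identification gives $\tau\doteq\Delta$ with no $(t-1)$ shift, so the ``shift factors between $\tau$ and $\Delta$'' you invoke cannot supply the correction. Third, two smaller but genuine issues: each swatch component meets the green annulus algebraically once but possibly geometrically more often, so ``$A_n$ with exactly $n$ punctures'' is an unproved normalization; and torsionness of the Alexander modules does not follow from Theorem~\ref{theorem:ribbon} --- ribbonness pushes toward \emph{vanishing} of Alexander invariants (multi-component boundary links, which are ribbon-adjacent, have identically zero MVA), so non-vanishing of $\Delta_{H\cup f(L_i)}$ must be established separately, presumably using the Hopf sublink and the Torres conditions rather than sliceness.
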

\noindent Here is an example of such trio of swatches demonstrating the content of conjecture \ref{conjecture:mva}:

\begin{align}
\textrm{MVA}_{H\cup f(p*_lp)}(t_1,t_2,t_3,t_4)  & = (t_1 - 1)^2(t_2t_3t_4 - t_3t_4 + t_3 + t_4 - 1)\nonumber\\
&(t_2t_3t_4 - t_2t_3 - t_2t_4 + t_2 - 1)\nonumber\\ 
\textrm{MVA}_{H\cup f(k*_lp)}(t_1,t_2,t_3,t_4)  & = (t_1 - 1)^2(t_3t_4 + t_2 - t_3 - t_4 + 1)\nonumber\\
&(t_2t_3t_4 - t_2t_3 - t_2t_4 + t_3t_4 + t_2)\nonumber\\
\hspace{50pt}\textrm{MVA}_{H\cup f((pp)*_l(kp))}(t_1,t_2,t_3,t_4)  & = \textrm{MVA}_{H\cup f((p*_lp)(k*_lp))}(t_1,t_2,t_3,t_4)\nonumber\\
&  = \textrm{MVA}_{H\cup f(p*_lp)}(t_1,t_2,t_3,t_4) \textrm{MVA}_{H\cup f(k*_lp)}(t_1,t_2,t_3,t_4)/(t_1-1)^2, 
\end{align}

\noindent where $t_i$ is associated with the $i^{\textrm{th}}$ component of an ordered $n$-component link. More examples are listed in tables \ref{tab:invariants0}, \ref{tab:invariants1} and \ref{tab:invariants2}. A disadvantage of the multiplicative structure of the multivariable Alexander polynomials of $m\times n$ swatches for fixed $n$ is that, all the links corresponding to swatches obtained by meridional annulus sum of an identical set of swatches evaluate to the same Laurent polynomial. In other words, this invariant is independent of the order in which the annulus sum is performed. Given below is the multivariable Alexander polynomial of 4$\times$1 swatches $kkpp$ and $kpkp$.
\begin{align}
& \textrm{MVA}_{H\cup f(kkpp)}(t_1,t_2,t_3) = \textrm{MVA}_{H\cup f(kpkp)} (t_1,t_2,t_3)\nonumber\\ 
&= (t_3 - 1)(t_2^2 + t_1 - t_2)^2 (-t_1t_2 + t_2^2 + t_1)^2\nonumber\\
&\hspace{60pt}(t_1t_2 - t_2 + 1)^2(t_1t_2 - t_1 + 1)^2.
\end{align}
Thus, despite the fact that these swatches are distinct because of the non-commutative action of meridional annulus sum, their multivariable Alexander polynomials are equal.
\begin{remark}\label{remark:brun}
The multivariable Alexander polynomial of a $(k+2)$-component link $H\cup f(L)\subset S^3$ corresponding to Brunnian swatch $L\subset T^2\times I$ takes the following special form:
\begin{align}\label{eqn:brun}
& \textrm{MVA}_{H\cup f(L)}(t_1,...,t_{k+2}) = \nonumber\\
& (1-t_1)^k\left(\prod\limits_{i=1}^{k+2}t_i^{m_i}+p(t_2,...,t_{k+2})\prod\limits_{j=3}^{k+2}(1-t_j)^{n_j}\right),
\end{align}
where $m_i\in\mathbb{Z}$ $\&$ $n_j\in\mathbb{N}$ for all $1\leq i,j\leq(k+2)$, and the variables $t_1, t_2$ are associated with the components $f(m),f(l)\subset H$ of the Hopf link respectively. 
\end{remark}
\textbf{Remark}~\ref{remark:brun} follows from the Torres formula \cite{Torres_1953}, which for an $n$-component link $L' = \bigcup\limits_{i=1}^nK_i\subset S^3$ gives the following relation:
\begin{align}
\label{eqn:torres}
& \textrm{MVA}_{L'}(t_1,...,t_{i-1},1,t_{i+1},...,t_n) = \nonumber\\
& \left(1- \prod\limits_{j\neq i} t_j^{\textrm{lk}(K_i,K_j)}\right) \textrm{MVA}_{L'\setminus K_i}(t_1,...,t_{i-1},t_{i+1},...,t_n).
\end{align}
Note that a swatch obtained by acting the meridional annulus sum on a pair of Brunnian swatches is Brunnian, and the form of corresponding multivariable Alexander polynomials in equation \ref{eqn:brun} is consistent with this observation. Therefore, we conclude that the set of all $m\times n$ Brunnian swatches forms a closed subset of the set of all $m\times n$ swatches $\mathcal{P}_n$, where both the sets consist of only the swatches generated using the annulus sums. In addition, for a pair of sets that are as mentioned above, $m\in\{1,2,\cdots\}$ is arbitrary and $n\in\{1,2,\cdots\}$ is fixed.

\subsection{Jones polynomial ($\textrm{V}_L$)}
Jones polynomial is a Laurent polynomial in a single variable with integer coefficients. Jones polynomial can be defined in terms of the \emph{bracket polynomial}. Given a link $L\subset S^3$, its bracket polynomial is denoted by $\langle L\rangle$. The bracket polynomial of a link in $S^3$ is calculated using the following set of local rules applied to each crossing in a link diagram successively as illustrated in Figure~\ref{fig:skein_relation}(a): 

\begin{figure}[h!]
\centering

\subfloat[Skein relation for the bracket polynomial.]{\includegraphics[width = 41 mm]{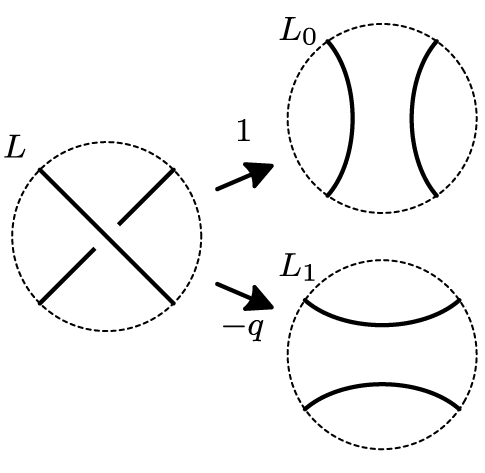}}
\hspace{1mm}
\subfloat[Crossing motifs with positive sign (left) and negative sign (right)]{\includegraphics[width = 41 mm]{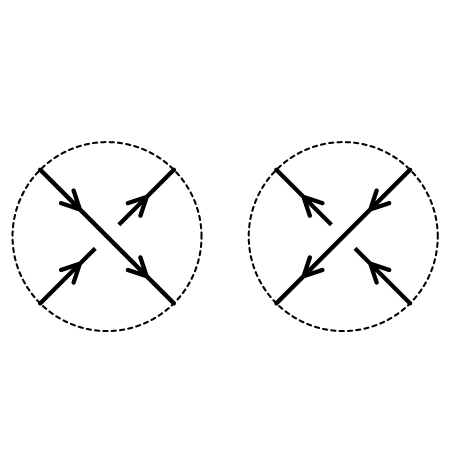}}

\caption{\label{fig:skein_relation} The elements of computation of the Jones polynomial of a link in a 3-sphere.}
\end{figure}

\begin{enumerate}
\item $\langle\bigcirc\rangle$ = 1.
\item If $L\subset S^3$ is a link in a 3-sphere, then $\langle L\rangle$, $\langle L_0\rangle$ and $\langle L_1\rangle$ which differ locally as shown in Figure~\ref{fig:skein_relation}(a) are related to each other by,
\begin{equation*}
\langle L\rangle  = \langle L_0\rangle - q \langle L_1\rangle. 
\end{equation*}
\item $\langle\bigcirc \cup L\rangle = (q+q^{-1})\langle L\rangle$.   
\end{enumerate}

\begin{definition}[Jones polynomial]
The \textit{Jones polynomial} of a link $L\subset S^3$ is given by
\begin{equation}
\emph{V}_{L}(q,q^{-1}) = (-1)^{n_{-}}q^{n_{+}-2n_{-}}\langle L\rangle,
\end{equation}
where the integers $n_{+}$, $n_{-}$ denote the total number of positive, negative crossings assigned as per the convention depicted in Figure~\ref{fig:skein_relation}(b).
\end{definition}

Jones polynomial is not multiplicative with respect to the meridional annulus sum. Nevertheless, it is independent of the order in which the annulus sums are performed. It depends on the quotient map, or on the number of copies of the minimal swatch constituting a textile link. An advantage of using Jones polynomial to describe swatches lies in the fact that it is able to distinguish between links which have equal values for every other link invariant in our study. For example, the value of invariants listed in TABLE~\ref{tab:invariants1} indicate that, except the cusp shapes and Jones polynomials of two versions of the cow hitch swatch, every other link invariant is identical. Since cusp shapes are non-zero for only hyperbolic links, having Jones polynomial to distinguish between seemingly identical (but distinct) swatches is useful.

\subsection{Link determinant (det)}
The link determinant is an integer valued link invariant that can be derived from the multivariable Alexander polynomial according to the following definition.
\begin{definition}[Link determinant, \cite{Rolfsen_1976}]
The positive integer given by $|\emph{MVA}_L(-1,-1,...,-1)|$ is called the link \emph{determinant} of a link $L\subset S^3$, which is denoted by \emph{det(L)}.
\end{definition}
\noindent Naturally, the link determinant is weaker than the multivariable Alexander polynomial because some information is lost when the latter is reduced by substitution. It is worth mentioning that the link determinant is also weaker than the Jones polynomial since it can be obtained by evaluating Jones polynomial at $q=-1$. Based on our \textbf{Conjecture}~\ref{conjecture:mva} about the multivariable Alexander polynomial, it follows that

\begin{equation}\label{eqn:link_det}
 2^N\textrm{det}(H\cup f(L_1*_mL_2*_m...*_mL_{N}) = \textrm{det}(H\cup f(L_1))\textrm{det}(H\cup f(L_2))...\textrm{det}(H\cup f(L_N)),
\end{equation}

where $L_j\subset T^2\times I$ is an $m_j\times n$ swatch, for all $j\in\{1,2,\cdots,N\}$. The relation in equation \ref{eqn:link_det} is consistent with the values of the link determinant in tables \ref{tab:invariants0}, \ref{tab:invariants1} and \ref{tab:invariants2}. We summarize our analysis of link invariants of two-periodic weft-knitted textiles in the following TABLE~\ref{tab:summary_invariants}:
         
\clearpage         
\begin{sideways}
\begin{threeparttable}
\caption{Summary of relevant properties of link invariants of swatches}
\begin{tabular}{|p{2cm}| p{2cm} |p{2cm}  |p{2cm}  |p{2cm} |p{10cm}|}
\hline
Link invariant & Non-commutative (meridional) & Non-commutative (longitudinal) & Indep. of cyclic permutation in the order (both) & Indep. of the quotient map & Classification vs Characterization and other important properties.\\
\hline
Linking number (lk)  & False & False & True & False & Given a link in $T^2\times I$, linking number can identify properties two and three in the list~\ref{list:properties}. It admits same value for all $m\times n$ swatches for a fixed $n\in\{1,2,\cdots\}$.\\
\hline
Fundamental group of the link complement & True & True & True & False & The fundamental group of the link complement is a strong link invariant relative to others used in our study. \texttt{SnapPy} is able to compute Wirtinger presentations in the {\tt sagemath} environment. {\tt SnapPy} can check if the link complements of a pair of hyperbolic links are isometric, which is equivalent to checking whether their fundamental groups are isomorphic.\\
\hline
Multivariable Alexander polynomial (MVA) & False & True & True & False & This invariant takes on a simple form given in equation \ref{eqn:brun} for Brunnian swatches. It is able to distinguish between swatches resulting from the longitudinal annulus sum of the same set of swatches.\\
\hline
The Jones polynomial (V) & False & False & True & False & This invariant is able to distinguish between two versions of the cow hitch swatch while almost every other invariant in the list fails. It is interesting to note that the Jones polynomial is identical for all the swatches obtained through annulus sum of a set of swatches.\\
\hline
Link determinant & False & False & True & False & It follows from the definition that the link determinant is weaker than multivariable Alexander polynomial and Jones polynomial. Nonetheless, equation \ref{eqn:link_det} that follows from \textbf{Conjecture}~\ref{conjecture:mva} might be noteworthy. \\
\hline
Hyperbolic volume (vol) & True & True & True & False & The hyperbolic volume is a good measure of complexity of hyperbolic swatches in agreement with our basic intuition about the complexity of swatches that give rise to hyperbolic links. The hyperbolic volume of a compound swatch is bounded below by the sum of the hyperbolic volumes of irreducible swatches constituting it.\\
\hline
Invariant trace field & True & True & True & True & Invariant trace field fits every criteria which we propose for a good invariant of stitch patterns of wo-periodic weft-knitted textiles, but we are not able to draw conclusions based on the approximate values computed in \texttt{SnapPy}.\\
\hline
The cusp shapes & True & True & True & False & Similar to the hyperbolic volume, cusp shapes are non-zero only if the underlying link is hyperbolic. Therefore, it can be used to identify hyperbolic swatches and their classification.\\
\hline
\end{tabular}
\end{threeparttable}
\label{tab:summary_invariants}
\end{sideways}
\clearpage

\section{Conclusion}
In this paper we state and prove the \textbf{Theorem}~\ref{theorem:ribbon} that encapsulates a characteristic property of stitch patterns of two-periodic weft-knitted textiles by virtue of which they lead to ribbon links. This is a crucial result for a couple of reasons: 1) the proof brings in a novel method of constructing a special class of links in $T^2\times I$ called \emph{swatches}, 2) the mechanics of weft-knitting entails different ways of tying knots into the bight or making different types of slip loops. This basic idea is at the core of constructing a swatch, and thus, we conclude that all the links derived from stitch patterns of two-periodic weft-knitted textiles must be swatches.

By analyzing different topological properties of swatches, we are able to extract novel aspects of stitch patterns of two-periodic weft-knitted textiles. For example, the links in $S^3$ arising from swatches are algebraically unlinked but not split, and a considerable number of swatches lead to \emph{Brunnian} links and \emph{hyperbolic} links. Learning about how swatches that lead to non-Brunnian links and non-hyperbolic links differ from their counterparts is insightful. 
Non-Brunnian swatches arise whenever one or more slip loops are transferred to the other needle without pulling a bight through them. Consequently, these slip loops are taken off of the needle at a later stage after making several rows of stitches. An example of a non-Brunnian swatch is shown in Figure~\ref{fig:bru}(b). With regards to the hyperbolic links and swatches, we state the \textbf{Conjecture}~\ref{conjecture:hyp} proposing that the links corresponding to swatches that are reducible to a finite collection of knit and purl swatches are hyperbolic.  

In an attempt at developing a universal formal language for describing charts that encode weft-knitting stitch patterns \cite{Durant_2015,Shida_2017, Pavilion_2015}, we recognize that the motifs that arise in weft-knitting charts and textiles form an infinite collection of \emph{monoids}, where each monoid is determined by the number of components in the swatches that it contains. 
Therefore, we partition the set of all swatches by assigning an index to each partition equal to the number of components in swatches contained in that partition (see equation \ref{eqn:partition}). These partitions form \emph{monoids}, and therefore, the letters in the weft-knitting alphabet are the set of generating swatches, which are acted upon by the associative binary operation given by the meridional annulus sum. The number of letters in the alphabet generating a given partition is countably infinite, unless the complexity of mechanical moves are limited by specifying a machine knitting protocol. Note that, for partitions of swatches with more than one component, the generating set of swatches consists of a subset of irreducible swatches and a subset of compound swatches. Here, the compound swatches that arise as generating swatches admit decomposition through the action of longitudinal annulus sum.

The simple syntax of stacking 2D motifs side by side and one below the other originate from the meridional and the longitudinal annulus sums of swatches. In this spirit, finite collections of symbols or letters that come up in knitting charts are simply concatenated to encode knitting instructions. These letters and symbols that encode knitting instructions are either short forms of the actual instructions spelled out, or they are caricatures of the visuals of the motifs. 

Apart from concatenation, we briefly describe few complex `grammatical' structures such as, cabling, yarn overs, stitch increases and decreases, slipping a stitch all of which give rise to higher order irreducible swatches -- $m\times n$ swatches with $m,n>1$. We delve into how the complexity of a swatch can be quantified by associating the notion of complexity with some observable features of swatches. This discussion is summarized below. 

Naturally, a compound swatch is more complex than each and every irreducible swatch constituting it, and the combined measure of complexity is roughly the sum of measures of complexity of the irreducible swatches. One can verify this by computing \emph{hyperbolic volume} and cusp shapes. The other notion of complexity is associated with the mechanics of knitting the motifs corresponding to the irreducible swatches. For instance, making a textile sample with the cow hitch stitch as the minimal swatch requires more complicated mechanical moves than making a textile sample with just knit and purl swatches. The former can be achieved only with hand knitting while the latter can be constructed using both the machine knitting and the hand knitting.   

In this work, we started out to find suitable link invariants for describing two-periodic weft-knitted textiles, where the main objective is two-fold: classification and characterization. The commensurability invariant given by the \emph{invariant trace field} is an ideal invariant of two-periodic textiles. A major bottleneck parameter in computing invariant trace field is the minimum number of crossings. \texttt{SnapPy}'s algorithm can only yield approximate values of the invariant trace fields using which we are not able to verify that swatches denoted by $k$ and $kk$ admit the same value of invariant trace field as expected. For almost all of the links in $S^3$ that correspond to the swatches, the minimal number of crossings is higher than that of the knit swatch and the purl swatch. The higher value of minimal number crossings along with the fact that these links are non-alternating implies that the approximate values of the invariant trace fields computed through \texttt{SnapPy} are non-reliable. Despite not being able to find an ideal, computable link invariant (through existing software tools), we propose three conjectures based on the computation and analysis of the link invariants that were examined in this study.

In \textbf{Conjecture}~\ref{conjecture:mva}, we propose that the set of multivariable Alexander polynomials of links associated with swatches having the same number of components form a closed set under multiplication up to a factor. From our analysis of hyperbolic swatches, we propose that the hyperbolic volume of the three component link corresponding to an $m\times1$ compound swatch is equal to the sum of the hyperbolic volumes of the links corresponding to its irreducible swatches \ref{conjecture:vol_sum}. These conjectures illustrate the fact that the complexities of compound swatches scale with the number of irreducible swatches and their individual complexities. In relation to swatches giving rise to hyperbolic links, we propose \textbf{Conjecture}~\ref{conjecture:hyp} according to which the links corresponding to swatches obtained by acting annulus sums on the link complements of knit and purl swatches are hyperbolic. However, it is crucial to note that these are not the only swatches that give rise to hyperbolic links.

\clearpage
\begin{sideways}
\begin{threeparttable}
\caption{A table of link invariants of swatches}
\begin{tabular}{|p{1.5cm} | p{1.85cm} | p{1.85cm} | p{1.85cm} | p{1.85cm} | p{1.85cm} | p{1.85cm} | p{2cm} | p{1.85cm} | p{2.5cm} |}
\hline
Link invariants & $k$ & $p$ & $kp$ & $kk$ &  $pp$ &  $k*_lk$ &  $p*_lp$ & $k*_lp$ & $(kp)*_l(pk)$ $=(k*_lp)( p*_lk)$  \\
\hline
Vol  
& $10.666979$ $1337962..$ 
& $10.666979$ $1337962..$ 
& $21.3339582$ $675925..$ 
& $21.3339582$ $675925..$ 
& $21.3339582$ $675925..$ 
& $21.3339582$ $675925..$ 
& $21.3339582$ $675925..$ 
& $21.3339582$ $675925..$ 
& $45.342172$ $4012022..$ 
\\
& & & & & & & & & \\
\hline
$\textrm{MVA}_L$ 
& $(1-t_1)(t_3^2 + t_2 - t_3)(t_3^2 - t_2t_3 + t_2)$ 
& $(t_1-1)(t_2t_3 - t_3 + 1)(t_2t_3 - t_2 + 1)$ 
& $(1-t_1)(t_2t_3 - t_3 + 1)(t_2t_3 - t_2 + 1)(t_3^2 + t_2 - t_3)(t_3^2 - t_2t_3 + t_2)$ 
& $(t_1-1)(t_3^2 + t_2 - t_3)^2(t_3^2 - t_2t_3 + t_2)^2$ &  $(t_1-1)(t_2t_3 - t_3 + 1)^2(t_2t_3 - t_2 + 1)^2$ 
& $(t_1 - 1)^2 (t_3^2t_4^2 - t_3^2t_4 - t_3t_4^2 + t_3t_4 - t_2)(t_3^2t_4^2 - t_2t_3t_4 + t_2t_3 + t_2t_4 - t_2)$ 
& $(t_1 - 1)^2(t_2t_3t_4 - t_3t_4 + t_3 + t_4 - 1)(t_2t_3t_4 - t_2t_3 - t_2t_4 + t_2 - 1)$ 
& $(t_1 - 1)^2(t_3t_4 + t_2 - t_3 - t_4 + 1)(t_2t_3t_4 - t_2t_3 - t_2t_4 + t_3t_4 + t_2)$ 
& $(t_1 - 1)^2(t_3t_4 + t_2 - t_3 - t_4 + 1)^2(t_2t_3t_4 - t_2t_3 - t_2t_4 + t_3t_4 + t_2)^2$ 
\\
& & & & & & & & & \\
\hline
$\textrm{V}_L$ 
& $-q^{-6}(1+q^2)(1-q+q^3-2q^4-2q^6+q^7-q^9+q^{10})$ 
& $q^{-4}(1+q^2)(1-3q+4q^2-2q^3+4q^4-3q^5+q^6)$ 
& $-q^{-8}(1 + q^2)(1 - 3q + 5q^2 - 4q^3 - q^4 + 7q^5 - 10q^6 + 8q^7 - 10q^8 + 7q^9 - q^{10} - 4q^{11} + 5q^{12} - 3q^{13} + q^{14})$ 
& $q^{-10}(1 + q^2)(1 - 2q + q^2 + 2q^3 - 7q^4 + 6q^5 - q^6 - 4q^7 + 7q^8 - 4q^9 + 7q^{10} - 4q^{11} - q^{12} + 6q^{13} - 7q^{14} + 2q^{15} + q^{16} - 2q^{17} + q^{18})$ 
& $2q^{-6}(1 + q^2)(1 - 3q + 6q^2 - 11q^3 + \frac{27}{2}q^4 - 12q^5 + \frac{27}{2}q^6 - 11q^7 + 6q^8 - 3q^9 + q^{10})$ & $-q^{-11}(1 + q^2)(1 - 2q + q^2 + 2q^4 - 8q^5 + 14q^6 - 14q^7 + 16q^8 - 14q^9 + 14q^{10} - 8q^{11} + 2q^{12} + q^{14} - 2q^{15} + q^{16})$ 
& $-2q^{-7}(1 + q^2)(1 - 4q + \frac{15}{2}q^2 - 8q^3 + 9q^4 - 8q^5 + \frac{15}{2}q^6 - 4q^7 + q^8)$
& $q^{-9}(1 + q^2)(1 - 3q + 2q^2 + 3q^3 - 8q^4 + 8q^5 - 10q^6 + 8q^7 - 8q^8 + 3q^9 + 2q^{10} - 3q^{11} + q^{12})$ 
& $-q^{-13}(1 + q^2)(1 - 3q - 2q^2 + 29q^3 - 77q^4 + 108q^5 - 74q^6 - 24q^7 + 129q^8 - 190q^9 + 210q^{10} - 190q^{11} + 129q^{12} - 24q^{13} - 74q^{14} + 108q^{15} - 77q^{16} + 29q^{17} - 2q^{18} - 3q^{19} + q^{20})$ 
\\
& & & & & & & & & \\
\hline
det$(L)$ 
& 4 
& 36 
& 36 
& 4 
& 324 
& 200 
& 200 
& 72 
& 648 
\\
& & & & & & & & & \\
\hline
cusp $\#0$ & $i0.377964$ $473009227$ 
& $-8.7e^{-17} +$ $i1.51185789..$ 
& $2.63e^{-16} +$ $0.30237157..$ 
& $-2.3e^{-17} +$ $i0.18898223..$ 
& $-3.05e^{-16} +$ $i0.75592894..$ 
& $4.76e^{-16} +$ $i0.75592894..$ 
& $-1.824e^{-15} +$ $i3.02371578..$ 
& $1.27e^{-16} +$ $i1.88982236..$ 
& $2.92e^{-16} +$ $i0.73743829..$ 
\\
\hline
cusp $\#1$ 
& $5.0e^{-16}+$ $i0.66143782..$ 
& $1.554e^{-15}+$ $i2.64575131..$ 
& $-2.665e^{-15} +$ $i3.30718913..$ 
& $-8.88e^{-16} +$ $i1.32287565..$ 
& $-3.664e^{-15} +$ $i5.29150262..$ 
& $5.6e^{-17} +$ $i0.33071891..$ 
& $-1.332e^{-15} +$ $i1.32287565..$ 
& $3.6e^{-17} +$ $i0.52915026..$ 
& $1.42e^{-16} +$ $i1.35604565..$ 
\\
\hline
cusp $\#2$ 
& $1.11e^{-15}+$ $i2.64575131..$ 
& $-1.998e^{-15}+$ $i2.64575131..$ 
& $-1.643e^{-14}+$ $i5.29150262..$ 
& $-1.110e^{-14} +$ $i5.29150262..$ 
& $-1.494e^{-14} +$ $i5.29150262..$ 
& $4.441e^{-15} +$ $i2.64575131$ 
& $-3.331e^{-15} +$ $i2.64575131..$ 
& $4.885e^{-15} +$ $i2.64575131..$ 
& $-4.578e^{-15}+$ $i5.67593337..$ 
\\
\hline
cusp $\#3$ 
& 
& 
& 
& 
& 
& $-2.614e^{-15} +$ $i2.64575131..$ 
& $2.22e^{-15} +$ $i2.64575131..$ 
& $1.554e^{-15} +$ $i2.64575131..$ 
& $1.3318e^{-14} +$ $i5.67593337..$ 
\\
\hline
\end{tabular}
\label{tab:invariants0}
\end{threeparttable}
\end{sideways}
\clearpage

\clearpage
\begin{sideways}
\begin{threeparttable}
\caption{A table of link invariants of swatches (continued)} 
\begin{tabular}{|p{1.5cm} | p{2.75cm} | p{2.75cm} | p{2.75cm} | p{2.75cm} | p{2.75cm} | p{3.75cm} |}
\hline
Link invariants 
& Swatch $p_{2t}$ (shown in Figure~\ref{fig:alphabets_p1}(b)) 
& Swatch $k_{2w}$ (shown in Figure~\ref{fig:alphabets_p1}(c)) 
&  $p_{2t}k_{2w}$ 
&  Cow hitch swatch (purl-like) 
& Cow hitch swatch (knit-like; shown in Figure~\ref{fig:cow_hitch}) 
& $k*_lp*_lp_t$\\ 
\hline
Vol  
& $11.6638549$ $138147..$ 
& $14.8269324$ $660918..$ 
& $26.4907873$ $799065..$ 
& $16.411246$ $0614635..$ 
& $16.411246$ $0614635..$ 
& $32.3533147$ $753432..$\\ 
& & & & & &\\ 
\hline
$\textrm{MVA}_L$ 
& $(t_1-1)(t_2t_3 - t_3 + 1)(t_2t_3 - t_2 + 1)$ 
& $(1-t_1)(t_3^3 + t_2 - t_3)(-t_2t_3^2 + t_3^3 + t_2)$ 
&  $(1- t_1)(t_2t_3 - t_3 + 1)(t_2t_3 - t_2 + 1)(t_3^3 + t_2 - t_3)(-t_2t_3^2 + t_3^3 + t_2)$ 
& $(t_1 - 1)$ 
& $(t_1 - 1)$ 
& $(t_1 - 1)^3(t_3^2t_4t_5 - t_3^2t_4 - t_3^2t_5 - t_3t_4t_5 + t_3^2 + t_3t_4 - t_2t_5 + t_3t_5 - t_3) (t_2t_3t_4t_5 - t_2t_3t_4 + t_3^2t_4 - t_2t_3t_5 - t_2t_4t_5 + t_2t_3 + t_2t_4 + t_2t_5 - t_2)$\\
& & & & & &\\ 
\hline
$\textrm{V}_L$ 
& $q^{-2}(1 + q^2)(2 - 3q + 4q^2 - 2q^3 + 3q^4 - 3q^5 + q^6$ 
& $-q^{-8}(1 + q^2)(1 - q + 2 q^3 - 2 q^4 - q^6 - q^8 - 2 q^9 + 2 q^{10} - q^{12} + q^{13}$ 
& $-q^{-12}(1 + q^2)(1 - 3 q + 4 q^2 - 2 q^3 - 2 q^4 + 6 q^5 - 9 q^6 + 11 q^7 - 7 q^8 + 2 q^{10} - 5 q^{11} + 5 q^{12} - 5 q^{13} + 4 q^{15} - 4 q^{16} + 2 q^{17})$ 
& $(-q^{-7})(1 + q^2)(-1 + 3q - 2q^2 - 2q^3 + 3q^4 - 2q^5 + q^6 - 4q^7 + 2q^8 + 2q^9 - 3q^{10} + q^{11})$ 
& $(q^{-9})(1 + q^2)(-1 + q + q^2 - 3q^3 + 2q^4 + 2q^5 - 2q^6 + q^7 + 3q^9 - 2q^{10} - 2q^{11} + 3q^{12} - q^{13} - q^{14} + q^{15})$ 
& $q^{-10}(1 + q^2) (1 - 4 q + 4 q^2 + 9 q^3 - 32 q^4 + 59 q^5 - 69 q^6 + 75 q^7 - 69 q^8 + 57 q^9 - 32 q^{10} + 8 q^{11} + 4 q^{12} - 4 q^{13} + q^{14})$ \\
& & & & & &\\
\hline
det$(L)$ 
& 36 
& 4 
& 36 
& 4 
& 4 
& 784\\
& & & & & &\\
\hline
cusp $\#0$ 
& $-0.2377114..+$ $i1.8443448..$ 
& $0.0382437..+$ $i0.4799797..$ 
& $0.03419881..+$ $i0.38101155..$ 
& $0.0085110..+$ $i1.9097742..$ 
& $0.0021277..+$ $i0.47744356..$ 
& $-0.10867972709..+$ $i3.4521316931894..$\\
\hline
cusp $\#1$ 
& $0.27496080..+$ $i2.1333530..$ 
& $-0.0412387..+$ $i0.5175695..$ 
& $-0.3161995..+$ $i2.65092256..$ 
& $-0.009334..+$ $i2.0944468..$ 
& $-0.002333..+$ $i0.52361170..$ 
& $0.047787517734..+$ $i0.4106072115318..$ \\
\hline
cusp $\#2$ 
& $0.27496080..+$ $i2.1333530..$ 
& $2.0724434..+$ $i2.2724864..$ 
& $1.79748262..+$ $i4.40583950..$ 
& $-0.009334..+$ $i2.0944468..$ 
& $-0.009334..+$ $i2.0944468..$ 
& $-0.20183912206..+$ $i2.750825167148..$\\
\hline
cusp $\#3$ 
& 
& 
& 
& 
& 
& $0.522445767750..+$ $i2.247859040741..$ \\
\hline
cusp $\#4$ 
& 
& 
& 
& 
& 
& $0.0908135925076..+$ $i2.609945699389..$\\
\hline
\end{tabular}
\begin{tablenotes}
\item The invariants listed after link determinant (det) are the cusp shapes. The numbering scheme is fixed throughout. The boundary torus of a neighborhood of the meridian ($f(m)\subset H$) and the longitude ($f(l)\subset H$) are denoted by cusp$\#0$ and cusp$\#1$ respectively. The rest are numbered starting from the top (along the wale direction aligned along $y$-axis) and they are associated with the components of sublink $f(L)\subset H\cup f(L)\subset S^3$.
\end{tablenotes}
\label{tab:invariants1}
\end{threeparttable}
\end{sideways}
\clearpage

\clearpage
\begin{sideways}
\begin{threeparttable}
\caption{A table of link invariants of swatches (continued)} 
\begin{tabular}{|p{1.5cm} | p{2.cm} | p{2.cm} | p{2.75cm} | p{3cm} | p{2.5cm} | p{3cm} | p{3.cm} |}
\hline
Link invariants 
& $k_t$ 
& $p_t$ 
& $kpp_t$ 
&  $pkp_t$ 
& 1$\times$1 cable swatch shown in Figure~\ref{fig:cross_stitch}(a)
& Stitch decrease and increase swatch (shown in Figure~\ref{fig:cross_stitch}(b)) 
& $p*_lk*_lp_t$ \\
\hline
Vol  
& $11.0821666$ $243737..$ 
& $11.0821666$ $243737..$ 
& $32.4161248$ $919662..$ 
& $32.4161248$ $919662..$ 
& $24.3051826$ $391952..$
& $34.7090324905077..$ 
& $32.3533147753432..$ \\
& & & & & & &\\
\hline
$\textrm{MVA}_L$ 
& $(1-t_1)(-t_2 + t_3 - 1)(t_2t_3 - t_2 + t_3)$ 
& $(t_1 - 1)(t_2 + t_3 - 1)(t_2t_3 - t_2 - t_3)$ 
& $(1- t_3)(t_1 + t_2 - 1)(t_2^2 + t_1 - t_2)(-t_1t_2 + t_2^2 + t_1)(t_1t_2 - t_2 + 1)(t_1t_2 - t_1 + 1)(t_1t_2 - t_1 - t_2)$
&  $(1- t_3)(t_1 + t_2 - 1)(t_2^2 + t_1 - t_2)(-t_1t_2 + t_2^2 + t_1)(t_1t_2 - t_2 + 1)(t_1t_2 - t_1 + 1)(t_1t_2 - t_1 - t_2)$ 
& $(t_1 - 1)(t_2^2 + t_3^2 - 2t_3 + 1)(t_2^2t_3^2 - 2t_2^2t_3 + t_2^2 + t_3^2)$
& $(t_1 - 1)^2 (t_3^3t_4^2 - t_3^3t_4 - t_3^2t_4^2 + t_3^2t_4 - t_3t_4 - t_2 + t_3 + t_4 - 1)(t_2t_3^3t_4^2 - t_2t_3^3t_4 - t_2t_3^2t_4^2 + t_3^3t_4^2 + t_2t_3^2t_4 - t_2t_3t_4 + t_2t_3 + t_2t_4 - t_2)$ 
& $(t_1 - 1)^3(t_3t_4t_5 - t_3t_4 - t_3t_5 - t_4t_5 - t_2 + t_3 + t_4 + t_5 - 1)(t_2t_3t_4t_5 - t_2t_3t_4 - t_2t_3t_5 - t_2t_4t_5 + t_3t_4t_5 + t_2t_3 + t_2t_4 + t_2t_5 - t_2)$ \\
& & & & & & &\\
\hline
$\textrm{V}_L$ 
& $q^-{5}(1 + q^2)(1 - q + 2q^3 - q^4 + q^5 - q^6 + q^7 - q^9 + q^{10})$ 
& $-q^{-3}(1 + q^2)(1 - 4q + 3q^2 - 3q^3 + 3q^4 - 3q^5 + q^6)$ 
& $q^{-9}(1 + q^2) (1 - 6 q + 16 q^2 - 27 q^3 + 27 q^4 - 10 q^5 - 19 q^6 + 51 q^7 - 69 q^8 + 76 q^9 - 68 q^{10} + 42 q^{11} - 8 q^{12} - 16 q^{13} + 26 q^{14} - 23 q^{15} + 13 q^{16} - 5 q^{17} + q^{18})$ 
& $q^{-9}(1 + q^2) (1 - 6 q + 16 q^2 - 27 q^3 + 27 q^4 - 10 q^5 - 19 q^6 + 51 q^7 - 69 q^8 + 76 q^9 - 68 q^{10} + 42 q^{11} - 8 q^{12} - 16 q^{13} + 26 q^{14} - 23 q^{15} + 13 q^{16} - 5 q^{17} + q^{18})$ 
& $-q^{-9}(1 + q^2)(1 - 2q + q^2 + 2q^3 - 2q^4 - 3q^5 + 8q^6 - 11q^7 + 10q^8 - 12q^9 + 10q^{10} - 5q^{11} - q^{12} + 4q^{13} - 3q^{14} + q^{15})$
& $-q^{-13}(1 + q^2) (1 - 3 q + 3 q^2 + 3 q^3 - 13 q^4 + 19 q^5 - 17 q^6 + 6 q^7 + 12 q^8 - 31 q^9 + 51 q^{10} - 55 q^{11} + 49 q^{12} - 33 q^{13} + 17 q^{14} - 4 q^{15} - 4 q^{16} + 4 q^{17} - 2 q^{19} + q^{20})$ 
& $q^{-10}(1 + q^2) (1 - 4 q + 4 q^2 + 9 q^3 - 32 q^4 + 59 q^5 - 69 q^6 + 75 q^7 - 69 q^8 + 57 q^9 - 32 q^{10} + 8 q^{11} + 4 q^{12} - 4 q^{13} + q^{14})$ \\
& & & & & & &\\
\hline
det$(L)$ 
& 4 
& 36 
& 324 
& 324 
& 100
& 392 
& 784 \\
& & & & & & &\\
\hline
cusp $\#0$ 
& $-0.057783..+$ $i0.42256082..$ 
& $-0.23113..+$ $i1.6902433..$ 
& $-0.0052516..+$ $i0.25709702..$ 
& $-0.0052516..+$ $i0.25709702..$ 
& $0.324347..+$ $i0.51116178..$
& $-0.3326528..+$ $i0.98590206..$ 
& $-0.2749292..+$ $i3.5331243..$ \\
\hline
cusp $\#1$ 
& $0.0794180..+$ $i0.58077069..$ 
& $0.317672..+$ $i2.3230827..$ 
& $0.31767219..+$ $i5.6302719..$ 
& $0.31767219..+$ $i5.6302719..$ 
& $-0.885015.. +$ $i1.39475926..$
& $0.00125017..+$ $i0.41202862..$ 
& $0.02098504..+$ $i0.4296608..$ \\
\hline
cusp $\#2$ 
& $0.3176721..+$ $i2.32308279..$ 
& $0.317672..+$ $i2.3230827..$ 
& $0.31767219..+$ $i7.6145854..$ 
& $0.31767219..+$ $i7.6145854..$ 
& $0.703435.. +$ $i4.97731847..$
& $0.11903618..+$ $i3.33245802..$ 
& $-0.2018391..+$ $i2.7508251..$\\ 
\hline
cusp $\#3$ 
& 
& 
& 
& 
&
& $-1.8545762..+$ $i3.84088297..$ 
& $0.52244576..+$ $i2.2478590..$ \\
\hline
cusp $\#4$ 
& 
& 
& 
& 
&
& 
& $0.09081359..+$ $i2.6099456..$ \\
\hline
\end{tabular}
\begin{tablenotes}
\item Similar to swatch $p_{2t}$ shown in Figure~\ref{fig:alphabets_p1}(b), swatches $p_t$ and $k_t$ denote purl-like and knit-like swatches with a single twist at the base of the slip loop (or the bight).
\end{tablenotes}
\label{tab:invariants2}
\end{threeparttable}
\end{sideways}
\clearpage

\section{Appendix}

\subsection{The fundamental group}
\label{appendix:link_group}

\begin{definition}[Based homotopy of loops]
Given a topological space $X$ and
\begin{align*}
\gamma_0 : [0,1] & \rightarrow X\\
\gamma_1 : [0,1] & \rightarrow X
\end{align*}
such that $\gamma_0(0) = \gamma_0(1) = x_0$ and $\gamma_1(0) = \gamma_1(1) = x_0$ for some $x_0\in X$. We say $\gamma_0$ is homotopic to $\gamma_1$ if there exists a continuous map
\begin{align*}
H:[0,1]\times[0,1]\rightarrow X
\end{align*}
where $H(t,s) = H_t(s) = \gamma_t(s)$ defines a set of loops based at $x_0$ i.e $H_t(0) = H_t(1) = x_0$ for all $t\in[0,1]$, $x_0\in X$ is called the base point.
\end{definition}
The property of based homotopy between two closed curves is an equivalence relation. We denote an equivalence class formed under homotopy by $\left[ \gamma \right]$ where $\gamma$ is a representative closed curve based at $x_0\in X$.
Given two loops $\gamma_1$ and $\gamma_2$ based at some point $x_0\in X$ we define \textit{concatenation} of loops as
\begin{align*}
\gamma(s) = \gamma_1* \gamma_2 = 
\begin{cases}
\gamma_1(2s)\hspace{40pt} s\in[0,1/2]\\
\gamma_2(2s-1)\hspace{22pt} s\in[1/2,1]
\end{cases}
\end{align*}
The operation of concatenation as defined above is well defined. To see this let's consider two representative loops from two classes $\alpha_1,\alpha_2\in [\alpha]$ and $\beta_1,\beta_2\in [\beta]$. Suppose concatenation of $\alpha_1$ and $\beta_1$ belongs $[\gamma]$, now we claim that concatenation of $\alpha_2$ and $\beta_2$ belongs to $[\gamma]$ as well, then 
\begin{align*}
[\alpha_2*\beta_2] & = [\alpha_2]*[\beta_2]\\
 & = [\alpha_1]*[\beta_1]\\
 & = [\alpha_1*\beta_1] \\
 & = [\gamma]
\end{align*}
\begin{definition}[Fundamental group]
The set of based homotopy equivalence classes of loops in a topological space forms a group with concatenation as the group operation.
\end{definition}
The fundamental group of $X$ denoted by $\pi_1(X, x_0)$, where $x_0$ is the base point. The identity element of the group is the constant function, $\gamma(s) = x_0$ for all $s\in[0,1]$. The inverse of a loop $\gamma$ is given by $\overline{\gamma}$ where
\begin{align*}
\gamma*\overline{\gamma} = 
\begin{cases}
\gamma(2s) \hspace{40pt} s\in[0,1/2] \\
\gamma(2 - 2s)\hspace{22pt} s\in[1/2,1]
\end{cases}
\end{align*}
It can be shown that $\gamma*\overline{\gamma}$ is homotopic to a constant loop, and thus, belongs to the equivalence class of identity element. If the underlying topological space is path connected $\implies$ fundamental group is independent of the choice of base point. Here are some examples of fundamental groups for some path connected topological spaces.
\begin{enumerate}
\item For $n > 1$
\begin{equation*}
\pi_1(\mathbb{R}^n) = \{\mathbf{0}\} 
\end{equation*}
\item 
\begin{equation*}
\pi_1(S^n) = 
\begin{cases}
(\mathbb{Z},+) & n = 1\\
\{\bf{0}\} & n > 1
\end{cases}
\end{equation*}
\item For genus $g$ handle-bodies
\begin{equation*} 
\pi_1(T^2_g) = (\mathbb{Z}_{2g},*)
\end{equation*}
where $*$ denotes the addition operation modulo $2g$ for $g\in\mathbb{N}$.
\item  $\pi_1(T^n) \cong (\mathbb{Z}_{n},*)$
where $*$ denotes the addition operation modulo $n$ for $n\in\mathbb{N}$.
\item $\pi_1(A)  \cong (\mathbb{Z},+)$
\end{enumerate} 
$A$ stands for \textit{Annulus}, $T^n\cong S^1\times S^1...\times S^1$ is $n$-torus, $T^2_{2g}$ is the boundary of the handlebody with $g$ handles.

\subsection{Group presentations}
\label{appendix:group_presentations}
\begin{definition}[Alphabet]
An alphabet $X$ consists a finite set of symbols or letters. 
\end{definition}
\begin{definition}[Syllable]
A syllable is a symbol of the form $a^n$ for $a\in X$ where $X$ is an alphabet and $n\in\mathbb{Z}$. 
\end{definition}
\begin{definition}[Word]
A word is a finite ordered sequence of syllables i.e. a word consists of a finite number of syllables placed next to each other. 
\end{definition}
\textbf{Example:} If $X = \{a,b,c,t\}$, then $a^3a^{-1}b^0b^{2}c^{-1}t$ is a word. Note that syllables are words of length one and a word with no syllable is denoted by $1$ and called the \textit{empty word}. We use concatenation to combine two words. Given the words $w_1$, $w_2$ from the alphabet $X$, $w_1w_2$, $w_2w_1$ are also words from $X$; $w_1w_2$ is not necessarily equal to $w_2w_1$.
 \begin{definition}[Reduced word]
We define two operations of contracting words: (i) $w_1a^pa^qw_2\sim w_1a^{p+q}w_2$, (ii) $w_1a^0w_2\sim w_1w_2$. The words $w_1a^{p+q}w_2$, $w_1w_2$ are called reduced words.
\end{definition}
Here $u\sim v$ denotes that $u$ and $v$ are equivalent. Given an alphabet $X$. The set of all reduced words $F[X]$ forms a group with concatenation of words as the group multiplication, $F[X]$ is called the free group generated by $X$ \cite{Nielsen_1917}. For example, $F[{x}]$ is the \textit{free group} of single generator.

\begin{definition}[Group homomorphism]
Given two groups $G_1$, $G_2$, a mapping $h$ between $G_1$ and $G_2$ is called a homomorphism if it preserves group multiplication i.e., $h(g_1\cdot g_2) = h(g_1)*h(g_2)$. A bijective homomorphism is called an \textit{isomorphism}. 
\end{definition}

Let $\{g_1,g_2,...,g_n\}$ be a generating set for a group $G$. Let $X$ be an alphabet and $f$ be an onto map from $X$ to $\{g_1,g_2,...,g_n\}$. Let $h$ be the natural extension of $f$ from the free group on X, $F[X]$ to $G$. Then it can be shown that the \emph{kernel of h} given by
\begin{equation}
\textrm{ker}(h)=\{g\in F(X):h(g) = e\}
\end{equation}
is a normal subgroup of $F[X]$, where $e\in G$ is the identity element. Moreover, using the group isomorphism property, it follows that the quotient group
$F[X]/ker(h)$ is isomorphic to the image of $h$, $Im(h) = G$. In this setting, a group presentation of $G$ is given by, $\langle x_1,x_2,...,x_k\rangle/\langle r_1,r_2,...,r_m\rangle$ where $\langle r_1,r_2,...,r_m\rangle = ker(h)$ denotes the smallest normal subgroup of $F[X]$ consisting of $\{r_i\}_{i=1}^m$, called the set of relations and $\langle x_1,x_2,...,x_k\rangle$ denotes the free group $F[X]$ on $X= \{x_1,x_2,...,x_k\}$.

\textbf{Examples:}
\begin{itemize}
\item Infinite cyclic group, $\mathbb{Z} \cong F[\{x\}]:= \langle x\rangle$
\item Finite cyclic group of order $n$, $\mathbb{Z}_n \cong F[\{x\}]/\langle x^n\rangle:= \langle x|x^n\rangle$
\item Dihedral group of order $n$, $D_n\cong\langle x,y|x^n, y^2, xyxy\rangle$
\end{itemize}


\begin{thebibliography}{WPMD00}

\bibitem[AB26]{Alexander_1926}
J.~W. Alexander and G.~B. Briggs.
\newblock On types of knotted curves.
\newblock {\em Annals of Mathematics}, 28(1/4):562--586, 1926.

\bibitem[ABD{\etalchar{+}}20]{ABCPR}
Paolo Aceto, Corey Bregman, Christopher~W Davis, JungHwan Park, and Arunima
  Ray.
\newblock Isotopy and equivalence of knots in 3-manifolds.
\newblock {\em arXiv preprint arXiv:2007.05796}, 2020.

\bibitem[Alb93]{Albaron_1993}
Albaron.
\newblock {\em Tissus D'Egypte, T{\'e}moins du monde arabe Viii$^{\,
  e}${\textendash}Xv$^{\, e}$ Si{\`e}cles}.
\newblock Gen{\`e}ve - Paris, 1993.

\bibitem[Ale28]{Alexander_1928}
J.~W. Alexander.
\newblock Topological invariants of knots and links.
\newblock {\em Transactions of the American Mathematical Society},
  30(2):275--306, 1928.

\bibitem[Ash44]{AoB}
Clifford~W. Ashley.
\newblock {\em The Ashley Book of Knots}.
\newblock ISBN 9780385040259. New York: Doubleday, 1944.

\bibitem[Ber08]{Bernard_2008}
Wendy Bernard.
\newblock {\em Custom Knits: Unleash Your Inner Design with Top-Down and
  Improvisational Techniques}.
\newblock Harry N. Abrams, 2008.

\bibitem[BL12]{Bao_2012}
Lihong Bao and Xiaodong Li.
\newblock Towards textile energy storage from cotton t-shirts.
\newblock {\em Advanced Materials}, 24(24):3246--3252, 2012.

\bibitem[Boo15]{Pavilion_2015}
Pavilion Books.
\newblock {\em {750 Knitting Stitches: The Ultimate Knit Stitch Bible}}.
\newblock St. Martin's Griffin, 2015.

\bibitem[Bru97]{Brunn_97}
H.~Brunn.
\newblock \"{U}ber verkettung.
\newblock {\em S. -B. Math. -Phys. Kl. Bayer Akad. Wiss.}, 22:77--99, 1897.

\bibitem[CDGW]{SnapPy}
Marc Culler, Nathan~M. Dunfield, Matthias Goerner, and Jeffrey~R. Weeks.
\newblock Snap{P}y, a computer program for studying the geometry and topology
  of $3$-manifolds.
\newblock Available at \url{http://snappy.computop.org} (06/2020).

\bibitem[CGHN00]{Coulson_2000}
David Coulson, Oliver~A. Goodman, Craig~D. Hodgson, and Walter~D. Neumann.
\newblock Computing arithmetic invariants of 3-manifolds.
\newblock {\em Experiment. Math.}, 9(1):127--152, 2000.

\bibitem[CKP19]{Champanerkar_19}
Abhijit Champanerkar, Ilya Kofman, and Jessica~S Purcell.
\newblock Geometry of biperiodic alternating links.
\newblock {\em Journal of the London Mathematical Society}, 99(3):807--830,
  2019.

\bibitem[DP11]{Doser_2011}
M.~Doser and H.~Planck.
\newblock 5 - textiles for implants and regenerative medicine.
\newblock In V.T. Bartels, editor, {\em Handbook of Medical Textiles}, Woodhead
  Publishing Series in Textiles, pages 132 -- 152. Woodhead Publishing, 2011.

\bibitem[Dur15]{Durant_2015}
Judith Durant.
\newblock {\em {Increase, Decrease: 99 Step-by-Step Methods; Find the Perfect
  Technique for Shaping Every Knitting Project}}.
\newblock Storey Publishing, LLC, 2015.

\bibitem[Eis09]{eisermann2009}
Michael Eisermann.
\newblock {The Jones polynomial of ribbon links}.
\newblock {\em Geom. Topol.}, 13(2):623--660, 2009.

\bibitem[FM66]{foxmilnor}
Ralph~H Fox and John~W Milnor.
\newblock Singularities of 2-spheres in 4-space and cobordism of knots.
\newblock 1966.

\bibitem[Fox62]{Fox_62}
Ralph~H. Fox.
\newblock A quick trip through classical knot theory.
\newblock PRENTICE-HALL, INC., Englewood Cliffs, N. J., 1962.

\bibitem[GMO07]{GRISHANOV_2007}
Sergei~A Grishanov, Vadim~R Meshkov, and Alexander~V Ome\'{l}chenko.
\newblock {KAUFFMAN-TYPE POLYNOMIAL INVARIANTS FOR DOUBLY PERIODIC STRUCTURES}.
\newblock {\em Journal of Knot Theory and Its Ramifications}, 16(06):779--788,
  2007.

\bibitem[Jap07]{Japel_2007}
Stefanie Japel.
\newblock {\em Fitted Knits: 25 Projects For The Fashionable Knitter}.
\newblock North Light Books, 2007.

\bibitem[KTP]{KTP}
The mathematica package knottheory`.

\bibitem[LRHB00]{Leong_2000}
KH~Leong, S~Ramakrishna, ZM~Huang, and GA~Bibo.
\newblock {The potential of knitting for engineering composites{\textemdash}a
  review}.
\newblock {\em Composites Part A-Applied Science And Manufacturing},
  31(3):197--220, March 2000.

\bibitem[MAN{\etalchar{+}}16]{McCann_2016}
James McCann, Lea Albaugh, Vidya Narayanan, April Grow, Wojciech Matusik,
  Jennifer Mankoff, and Jessica Hodgins.
\newblock {A compiler for 3D machine knitting}.
\newblock {\em ACM Transactions on Graphics (TOG)}, 35(4):49, July 2016.

\bibitem[MG09]{MORTON_2009}
Hugh~R. Morton and Sergei Grishanov.
\newblock Doubly periodic textile structures.
\newblock {\em Journal of Knot Theory and Its Ramifications},
  18(12):1597--1622, Dec 2009.

\bibitem[Mil57]{Milnor_57}
John Milnor.
\newblock Isotopy of links.
\newblock In {\em Algebraic and Geometric Topology, A symposium in honor of S.
  Lefschetz}, pages 280--306. Princeton University Press, Princeton N.J., 1957.

\bibitem[MM20]{Markande_2020}
Shashank~G Markande and Elisabetta~A Matsumoto.
\newblock Knotty knits are tangles on tori.
\newblock 02 2020.

\bibitem[Mos73]{Mostow_73}
G.~D. Mostow.
\newblock {\em Strong Rigidity of Locally Symmetric Spaces. (AM-78)}.
\newblock Princeton University Press, 1973.

\bibitem[MR03]{Maclachlan_2003}
Colin Maclachlan and Alan~W. Reid.
\newblock {\em Invariant Trace Fields}.
\newblock Springer New York, New York, NY, 2003.

\bibitem[Nie17]{Nielsen_1917}
J.~Nielsen.
\newblock Die isomorphismen der allgemeinen, unendlichen gruppe mit zwei
  erzeugenden.
\newblock {\em Mathematische Annalen}, 78(1-4):385--397, Dec 1917.

\bibitem[Pad20]{Paden_2020}
Shirley Paden.
\newblock {\em Knitwear Design Workshop: A Comprehensive Guide to Handknits}.
\newblock 2020.

\bibitem[Pra73]{Prasad_73}
Gopal Prasad.
\newblock Strong rigidity of q-rank 1 matrices.
\newblock {\em Inventiones mathematicae}, 21:255--286, 1973.

\bibitem[Pur20]{Purcell_2020}
Jessica~S. Purcell.
\newblock {\em Hyperbolic Knot Theory}.
\newblock 2020.

\bibitem[RAR16]{RAJENDRAN_2016}
S.~Rajendran, S.C. Anand, and A.J. Rigby.
\newblock 5 - textiles for healthcare and medical applications.
\newblock In A.~Richard Horrocks and Subhash~C. Anand, editors, {\em Handbook
  of Technical Textiles (Second Edition)}, pages 135 -- 168. Woodhead
  Publishing, second edition edition, 2016.

\bibitem[Rei27]{Reidemeister_1927}
Kurt Reidemeister.
\newblock Elementare begr\"{u}ndung der knotentheorie.
\newblock {\em Abh. Math. Sem. Univ. Hamburg}, 5(1):24--32, 1927.

\bibitem[Rol76]{Rolfsen_1976}
Dale Rolfsen.
\newblock {\em Knots and Links}.
\newblock Publish or Perish, Inc., 1976.

\bibitem[SDG{\etalchar{+}}24]{singal:2024p2622}
Krishma Singal, Michael~S. Dimitriyev, Sarah~E. Gonzalez, A.~Patrick Cachine,
  Sam Quinn, and Elisabetta~A. Matsumoto.
\newblock Programming mechanics in knitted materials, stitch by stitch.
\newblock {\em Nature Communications}, 15(1):2622, March 2024.

\bibitem[Shi17]{Shida_2017}
Hitomi Shida.
\newblock {\em {Japanese Knitting Stitch Bible: 260 Exquisite Patterns}}.
\newblock Tuttle Publishing, 2017.

\bibitem[Sin09]{Singh_2009}
Hansi Singh.
\newblock {\em Amigurumi Knits: Patterns for 20 Cute Mini Knits}.
\newblock Rockport Publishers Inc., Creative Publishing International, US,
  2009.

\bibitem[SSA{\etalchar{+}}14]{Soin_2014}
N.~Soin, T.H. Shah, S.C. Anand, J.~Geng, W.~Pornwannachai, P.~Mandal, D.~Reid,
  S.~Sharma, R.L. Hadimani, D.V. Bayramol, and E.~Siores.
\newblock Novel "3-d spacer" all fibre piezoelectric textiles for energy
  harvesting applications.
\newblock {\em Energy and Environmental Science}, 7(5):1670--1679, 2014.
\newblock cited By 121.

\bibitem[{The}19]{sagemath}
{The Sage Developers}.
\newblock {\em {S}ageMath, the {S}age {M}athematics {S}oftware {S}ystem
  ({V}ersion 8.9)}, 2019.
\newblock {\tt https://www.sagemath.org}.

\bibitem[Tor53]{Torres_1953}
Guillermo Torres.
\newblock On the alexander polynomial.
\newblock {\em Annals of Mathematics}, 57(1):57--89, 1953.

\bibitem[Tri69]{tristram_1969}
A~G Tristram.
\newblock {Some cobordism invariants for links}.
\newblock {\em Mathematical Proceedings of the Cambridge Philosophical
  Society}, 66(2):251--264, 1969.

\bibitem[TTS97]{Tan_1997}
P~Tan, L~Tong, and G~P Steven.
\newblock {Modelling for predicting the mechanical properties of textile
  composites{\textemdash}A review}.
\newblock {\em Composites Part A-Applied Science And Manufacturing},
  28(11):903--922, January 1997.

\bibitem[WPMD00]{Weimer_2000}
C~Weimer, T~Preller, P~Mitschang, and K~Dreschler.
\newblock {Approach to net-shape preforming using textile technologies. Part
  II: holes}.
\newblock {\em Composites Part A-Applied Science And Manufacturing},
  31(11):1269--1277, November 2000.

\end{thebibliography}
\newcommand{\etalchar}[1]{$^{#1}$}

\end{document}